\def\section{%
  \@startsection{section}{1}
    {\z@}
    {2.0ex plus 0.8ex minus .1ex}
    {1.0ex plus .2ex}
    {\large\bfseries\boldmath\centering\MakeTextUppercase}%
}
\newtheorem{thm}{Theorem}[section]
\newtheorem{lem}[thm]{Lemma}
\newtheorem{rem}[thm]{Remark}
\newtheorem{pro}[thm]{Proposition}
\newtheorem{defi}[thm]{Definition}
\newcommand*{\QEDB}{\hfill\ensuremath{\square}}
\numberwithin{equation}{section}
\newcommand\relphantom[1]{\mathrel{\phantom{#1}}}
\newcommand\repl{\relphantom}
\begin{document}

\title[\sc\small S\MakeLowercase{tability of the generalized} L\MakeLowercase{agrangian mean curvature flow in cotangent bundle}]{\sc\LARGE S\MakeLowercase{tability of the generalized} L\MakeLowercase{agrangian mean curvature flow in cotangent bundle}}
\keywords{generalized Lagrangian mean curvature flow, special Lagrangian submanifold, stability.}
\author[\sc\small X\MakeLowercase{ishen} J\MakeLowercase{in and} J\MakeLowercase{iawei} L\MakeLowercase{iu}]{\sc\large X\MakeLowercase{ishen} J\MakeLowercase{in and} J\MakeLowercase{iawei} L\MakeLowercase{iu}}
\address{Xishen Jin\\ Department of Mathematics\\Renmin University of China\\Beijing, 100872, China.} \email{jinxishen@ruc.edu.cn}
\address{Jiawei Liu\\ School of Mathematics and Statistics\\ Nanjing University of Science \& Technology\\ Xiaolingwei Street 200\\ Nanjing 210094\\ China.} \email{jiawei.liu@njust.edu.cn}
\subjclass[2020]{53E10,\ 53D12.}
\thanks{X.S. Jin is supported by NSFC (Grant No.12001532). J.W. Liu is supported by NSFC (Grant No.12371059), Jiangsu Specially-Appointed Professor Program, Fundamental Research Funds for the Central Universities and Special Priority Program SPP 2026 ``Geometry at Infinity" from the German Research Foundation (DFG)}
\maketitle
\vskip -3.99ex

\centerline{\noindent\mbox{\rule{3.99cm}{0.5pt}}}

\vskip 5.01ex

\ \ \ \ {\bf Abstract.} In this paper, we consider the stability of the generalized Lagrangian mean curvature flow of graph case in the cotangent bundle, which is first defined by Smoczyk-Tsui-Wang \cite{STW}. By new estimates of derivatives along the flow, we weaken the initial condition and remove the positive curvature condition in \cite{STW}. More precisely, we prove that if the graph induced by a closed $1$-form is a special Lagrangian submanifold in the cotangent bundle of a Riemannian manifold, then the generalized Lagrangian mean curvature flow is stable near it.


\vspace{7mm}

\section{Introduction}
Special Lagrangian submanifolds were introduced by Harvey-Lawson \cite{HL} in their study of calibration geometry. They attract a lot of attention due to their relations to the  Strominger-Yau-Zaslow conjecture \cite{SYZ} on the mirror symmetry between Calabi-Yau manifolds. Since the special Lagrangian submanifolds are always minimal, a natural approach to find such submanifold is to evolve a Lagrangian submanifold along the mean curvature flow, which is the negative gradient flow of the area functional. When the ambient manifold is a K\"ahler-Einstein manifold, Smoczyk \cite{S96} proved that the Lagrangian property is preserved along the mean curvature flow. But this property no longer holds if the ambient manifold is a general symplectic manifold. Therefore, the original mean curvature flow can not be directly used in the study of the important conjectures which concern the Lagrangian isotopy problem in general symplectic manifolds, such as the cotangent bundles which do not carry K\"ahler-Einstein structures. Thereby how to find special Lagrangian submanifolds by providing Lagrangian deformation through geometric flow becomes an important problem. In \cite{B11}, Behrndt introduced the generalized mean curvature flow which evolves in direction of the generalized mean curvature vector field. He proved that the Lagrangian condition is kept along the flow in a K\"ahler manifold carrying an almost Einstein structure. In \cite{SW2011}, Smoczyk-Wang defined the generalized mean curvature flow in more general almost K\"ahler manifolds. Then Smoczyk-Tsui-Wang \cite{STW} comprehensively studied this flow in the cotangent bundle and proved a stability theorem near the zero section of the cotangent bundle.

An almost K\"ahler structure on a symplectic manifold $(M,\omega)$ consists a Riemannian metric $G$ and an almost complex structure $J$ such that the symplectic form $\omega$ satisfies that
\begin{equation}\label{0825000}
\omega(\cdot,\cdot)=G(J\cdot,\cdot).
\end{equation}
A symplectic manifold with an almost K\"ahler structure $(M,\omega,G,J)$ is referred as an almost K\"ahler manifold. To find special Lagrangian submanifolds in such manifold, Smoczyk-Wang \cite{SW2011} introduced the generalized mean curvature flow. A smooth family of almost Lagrangian immersions
\begin{equation}\label{0825004}
F:\Sigma\times[0,T)\to M
\end{equation}
is said to satisfy the generalized mean curvature flow if $F$ satisfies
\begin{equation}\label{0825005}
\frac{\partial F}{\partial t}(x,t)=\widehat H(x,t)\ \ \ \text{and}\ \ \ F(\Sigma,0)=\Sigma_0,
\end{equation}
where $\widehat H(x,t)$ is the generalized mean curvature vector with respect to the canonical connection on the almost Lagrangian submanifold $\Sigma_t=F(\Sigma,t)$ at $F(x,t)$. Since the cotangent bundle $T^*X$ of a Riemannian manifold $(X,g)$ admits a canonical almost K\"ahler structure with respect to the base metric $g$ (see section \ref{sec-2} or \cite{YI73,IV87,STW} for more details), in \cite{STW}, Smoczyk-Tsui-Wang studied the generalized mean curvature flow \eqref{0825005} in $T^*X$. They first proved that the Ricci from of the canonical connection on $T^*X$ vanishes (Theorem $1$ in \cite{STW}). Combining this with Theorem $2$ in \cite{SW2011}, they concluded that  the Lagrangian condition is preserved along the generalized mean curvature flow. Moreover, similar to Calabi-Yau case, they related the generalized mean curvature vector field of a Lagrangian submanifold to the Lagrangian angle through a holomorphic $n$-form. Then they proved that the generalized Lagrangian mean curvature flow of compact Lagrangians in the cotangent bundle keeps the exactness and the zero Maslov class (Theorem $2$ in \cite{STW}). 

Although the generalized Lagrangian mean curvature flow in cotangent bundle maintains many good properties, as we know from Neves's work \cite{AN1, AN2}, there are still many analytic difficulties even in the original Lagrangian mean curvature flow case. As the  pioneering step toward understanding this flow, Smoczyk-Tsui-Wang \cite{STW} considered the generalized Lagrangian mean curvature flow given as graphs of closed $1$-forms in the cotangent bundle of a Riemannian manifold $(X,g)$. Let $u_0$ be a smooth function on $X$. Then $du_0$, as a closed 1-form, induces a Lagrangian submanifold $\Sigma_0$ in $T^*X$. Starting from $\Sigma_0$, since this flow keeps the exactness, the generalized mean curvature flow $\Sigma_{t}$ is given as the graph of $du_t$ and $u_t$ satisfies the following fully nonlinear parabolic equation
\begin{equation}\label{GLMCF0}
  \frac{\partial }{\partial t}u_t=\theta(du_t)=\frac{1}{\sqrt{-1}}\log \frac{\det (g_{i j}+\sqrt{-1}(u_t)_{ij})}{\sqrt{\det g_{ij}}\sqrt{\det (\widetilde{\eta}_{t})_{ij}}}
\end{equation}
with initial value $u_0$, where $\theta(du_t)$ is the Lagrangian angle of $\Sigma_t$, and $(\widetilde\eta_t)_{ij}=g_{ij}+(u_t)_{ki}g^{kl}(u_t)_{lj}$ is the induced metric on $\Sigma_t$. For equation \eqref{GLMCF0}, Smoczyk-Tsui-Wang proved the following theorem.
\begin{thm}[Theorem $3$ in \cite{STW}]\label{0825001}
When $(X,g)$ is a standard round sphere of constant sectional curvature, then the zero section in $T^*X$ is stable under the generalized Lagrangian mean curvature flow \eqref{GLMCF0}.
\end{thm}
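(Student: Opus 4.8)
The plan is to regard \eqref{GLMCF0} as a fully nonlinear, uniformly parabolic equation near the stationary solution $u\equiv\mathrm{const}$ and to prove asymptotic stability in four steps: parabolicity and linearization, a priori estimates under a smallness assumption on the initial data, long-time existence, and exponential convergence back to the zero section. First I would diagonalize the operator: if $\lambda_1,\dots,\lambda_n$ denote the eigenvalues of the covariant Hessian $\nabla^2 u$ with respect to $g$, then \eqref{GLMCF0} reads $\partial_t u=\theta(du)=\sum_i\arctan\lambda_i$. This shows that $u\equiv\mathrm{const}$ (the zero section) solves the flow with $\theta\equiv 0$, that the equation is uniformly parabolic as long as the Hessian stays bounded (the coefficients $\partial\theta/\partial u_{ij}=(1+\lambda_i^2)^{-1}$ in the eigenbasis are positive and bounded below), and that the linearization at $u=0$ is the heat operator $\partial_t-\Delta_g$. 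On the round sphere $\Delta_g$ has a spectral gap with kernel equal to the constants, and since $H^1$ of the sphere vanishes the zero section is the only nearby special Lagrangian graph; this is precisely what makes exponential decay to the zero section plausible.

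Short-time existence and uniqueness then follow from standard fully nonlinear parabolic theory. For the $C^0$ control I would run the maximum principle: at a spatial maximum of $u_t$ the Hessian is nonpositive, hence $\theta=\sum_i\arctan\lambda_i\le 0$ and $\max_X u_t$ is nonincreasing; symmetrically $\min_X u_t$ is nondecreasing, so the oscillation $\mathrm{osc}(u_t)$ is nonincreasing. This bounds $u_t$ modulo additive constants by its initial oscillation, but by itself gives only monotonicity, not decay.

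The main obstacle, and the step where the positive constant curvature of the round sphere is essential, is the gradient (and resulting second-order) estimate. I would derive the evolution equation of $|\nabla u_t|^2$ (equivalently, work with $\theta$ and $|\nabla\theta|^2$, using that $\partial_t u=\theta$ and that $\theta$ satisfies a heat-type equation along $\Sigma_t$). The resulting Bochner-type identity contains a $\mathrm{Ric}$-term, and on the round sphere the positivity of the Ricci curvature produces the favorable sign needed to close a maximum-principle argument, yielding not merely a uniform bound but in fact exponential decay of $|\nabla u_t|$. Once $C^1$ is controlled the Hessian eigenvalues remain in a range where the operator $\sum_i\arctan\lambda_i$ is concave, so Evans--Krylov and Krylov--Safonov theory give a uniform $C^{2,\alpha}$ estimate, and Schauder bootstrapping yields estimates on all higher derivatives.

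Finally, the uniform a priori estimates give long-time existence, and convergence follows by combining the decay produced in the gradient estimate with the spectral gap of $\Delta_g$: since $|\nabla u_t|$ decays exponentially, the eigenvalues $\lambda_i\to 0$ uniformly and hence $\theta=\partial_t u\to 0$ exponentially, which forces the average $\bar u_t$ to converge to a constant $u_\infty$; together with $du_t\to 0$ this shows $\Sigma_t$ converges smoothly and exponentially to the zero section, which is the asserted stability. I expect the gradient estimate and its reliance on the curvature sign to be the crux; the remaining steps are comparatively routine applications of parabolic maximum principles, concavity, and Schauder theory.
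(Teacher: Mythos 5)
Your setup is accurate in several places: the reduction of \eqref{GLMCF0} to $\partial_t u=\sum_i\arctan\lambda_i$, the $C^0$ monotonicity by the maximum principle, and the use of the Ricci term in the evolution of $|\nabla u_t|^2$ to drive the final convergence all match the actual argument. The genuine gap is your second-order estimate. You pass from ``$C^1$ is controlled'' to ``the Hessian eigenvalues remain in a range where the operator is concave, so Evans--Krylov gives $C^{2,\alpha}$,'' and this step fails twice over. First, a gradient bound --- even an exponentially decaying one --- gives no pointwise control of $D^2u_t$: functions with small gradient can have arbitrarily large Hessian, and Evans--Krylov requires an a priori $C^{1,1}$ bound as \emph{input}; it upgrades $C^{1,1}$ to $C^{2,\alpha}$, it does not produce the Hessian bound. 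Second, even granting a small Hessian, $\sum_i\arctan\lambda_i$ is not concave there: $\arctan$ is concave only for $\lambda\geqslant 0$ and convex for $\lambda\leqslant 0$, so for small Hessians of mixed sign (phase near $0$, the subcritical regime) the concavity hypothesis of Evans--Krylov is exactly what is unavailable. Worse, the logic is circular: your gradient-decay argument via the Bochner/Ricci term needs the coefficients $(1+\lambda_p^2)^{-1}$ of the linearized operator to be bounded below, i.e.\ it already presupposes the Hessian bound you intend to derive only afterwards.

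What is missing is precisely the heart of the proof in \cite{STW} (summarized around \eqref{0826001}--\eqref{0826003} in this paper): an a priori estimate showing that \emph{smallness of the Hessian is preserved} along the flow, established before anything else. One works with $\widetilde\rho=\frac{1}{2}\log\frac{\det\widetilde\eta_t}{\det g}=\frac{1}{2}\sum_i\log(1+\lambda_i^2)$, whose evolution \eqref{0826002} contains: (i) a cubic term in $D^3u_t$ whose coefficient is negative when all $\lambda_i$ are small; (ii) the curvature term
\begin{equation*}
-\sum_{p<k}\frac{2(\lambda_p-\lambda_k)^2}{(1+\lambda_p^2)(1+\lambda_k^2)}R_{kppk},
\end{equation*}
which is nonpositive precisely because the sectional curvature of the round sphere is positive; and (iii) a term involving $DRm$ which vanishes because the curvature is constant. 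The maximum principle then keeps $\widetilde\rho$, hence $|D^2u_t|^2_g$, small; only after that do the $C^3$ estimate, the Schauder bootstrapping, long-time existence, and the convergence from \eqref{0826001} (your Ricci/Bochner step) go through. So the positive curvature of the sphere enters first and most crucially in the Hessian estimate, not in the gradient estimate where you placed it, and your proposal as written has no mechanism to close the $C^2$ bound.
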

\begin{rem}\label{0825002}
The stability in Theorem \ref{0825001} is in the following sense. Suppose that a Lagrangian submanifold $\Sigma_0$ is the graph of $du_0$ for a smooth function $u_0$ on $X$ and let $\lambda_i$ be the eigenvalues of the Hessian of $u_0$ with respect to $g$. Then there exists a positive constant $\delta$ depending only on $n$ and the curvature of $X$ such that if $\prod_{i=1}^n (1+\lambda_i^2)\leqslant 1+\delta$, then the generalized mean curvature flow starting from $\Sigma_0$ exists smoothly for all time, and converges to the zero section smoothly at infinity.

In \cite{STW}, they also pointed that this stability theorem still holds true when the sphere is replaced by a compact Riemannian manifold of positive sectional curvature.
\end{rem}
There are some other stability results on the Lagrangian mean curvature flows. For examples, Smoczyk-Wang \cite{SW2002}, Zhang \cite{XWZ}, Chen-Pang \cite{CP09}, Chau-Chen-He \cite{CCH},  Chau-Chen-Yuan \cite{CCY} and Smoczyk-Tsui-Wang \cite{STW1} studied the stability of Lagrangian mean curvature flows when the base metrics are flat. Han-Jin \cite{HJ1} considered the stability of mean curvature flows on holomorphic line bundles, which can be seen as a complex version of the Lagrangian mean curvature flow. In \cite{TW1}, Tsui-Wang studied the stability of mean curvature flows in several well-known model spaces of manifolds. In fact, they proved the stability of mean curvature flows near the zero sections in the cotangent bundles of sphere with Stenzel metric and complex projective space with Calabi-Yau metric, and in the vector bundles over certain Einstein manifolds. There are also some results on the stability of higher co-dimensional mean curvature flows, see \cite{W1,W2, TW2, MW} for more details.

In this paper, by introducing new techniques for derivative estimates (especially for $C^2$-estimate), we proved that if the graph induced by a closed $1$-form is a special Lagrangian submanifold in the cotangent bundle, then the generalized Lagrangian mean curvature flow is stable near it. The innovations are that we remove the assumption on the positive curvature condition of the ambient manifold in \cite{STW} and that we generalize the initial condition from close to the zero section to the more general case close to the special Lagrangian submanifold. Moreover, by proving a Harnack-type inequality, we also deduce the exponentially convergence in smooth sense.

The main result in this paper is the following stability theorem.
\begin{thm}\label{0825003}
Let $(X,g)$ be a compact $n$-dimensional Riemannian manifold and $\hat\chi$ be a closed $1$-form on $X$. If the graph $\Sigma_{\hat \chi}$ induced by $\hat\chi$ is a special Lagrangian submanifold in $T^*X$. Then the generalized Lagrangian mean curvature flow is stable near $\Sigma_{\hat \chi}$. More precisely, there exists a positive constant $\delta_0$ depending only on $n$, $g$ and $\hat \chi$ such that for any $u_0\in C^\infty(X)$ whose Hessian satisfies $|D^2 u_0|^2_g\leqslant \delta_0$, the generalized Lagrangian mean curvature flow starting from the graph $\Sigma_{\hat \chi_{0}}$ of $\hat\chi_{0}=\hat\chi+du_0$ exists smoothly for all time, and converges exponentially to $\Sigma_{\hat \chi}$ in $C^\infty$-sense.
\end{thm}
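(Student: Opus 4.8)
The plan is to treat \eqref{GLMCF0} as the fully nonlinear parabolic equation $\partial_t u=\theta(\hat\chi+du)$ and to run the standard architecture: short-time existence, uniform a priori estimates to all orders, long-time existence, and finally exponential convergence. The conceptual starting point is that the special Lagrangian condition means $\theta(\hat\chi)$ is a constant $\theta_0$, so $\hat\chi$ is a stationary solution (its graph is fixed in time), and the linearization of the right-hand side at $\hat\chi$ in a direction $du$ is exactly the heat operator $\widetilde\eta^{ij}\nabla_i\nabla_j u=\Delta_{\widetilde\eta}u$ of the induced metric. Since $\mathrm{Hess}(u)=0$ forces $du=0$ on a compact manifold, this linearization is non-degenerate, with kernel the constants and a genuine spectral gap $\lambda_1>0$ on the $L^2$-complement of the constants; this gap is what ultimately drives the convergence, and it is also why the smallness $|D^2u_0|_g^2\le\delta_0$ of the Hessian (rather than closeness to the zero section as in \cite{STW}) is the natural hypothesis.

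The heart of the argument, and the step I expect to be the main obstacle, is the a priori $C^2$-estimate \emph{without} any sign assumption on the curvature of $(X,g)$. I would first record the crucial structural fact that the Lagrangian angle itself evolves by the honest linear heat equation $\partial_t\theta=\Delta_{\widetilde\eta}\theta$, which carries no zeroth-order and no curvature term, so by the maximum principle $\mathrm{osc}_X\theta(t)$ is non-increasing and the phase stays pinned near $\theta_0$. For the Hessian set $S=|D^2u|_g^2$ and compute its evolution by commuting covariant derivatives; after using the concavity structure of the phase operator together with Cauchy--Schwarz to absorb the third-derivative terms into the favorable gradient term, one is left with the parabolic inequality $\partial_t S\le \widetilde\eta^{kl}\nabla_k\nabla_l S-c\,\widetilde\eta^{kl}\langle\nabla_k D^2u,\nabla_l D^2u\rangle+C\,S$, in which the indefinite curvature of $(X,g)$ enters only through the term $C\,S$. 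In \cite{STW} this term was killed by positivity; here the plan is to absorb it differently, by using the smallness $|D^2u_0|^2\le\delta_0$ to stay uniformly elliptic (so $\widetilde\eta\sim g$), exploiting the vanishing of the Ricci form of the canonical connection (Theorem $1$ of \cite{STW}) to simplify the reaction terms, and then combining the coercive gradient term with a Poincaré inequality for $\widetilde\eta$ in an integral rather than pointwise estimate so that for $\delta_0$ small $\int_X S\,d\mu_{\widetilde\eta}$ cannot grow and in fact decays. Making this integral/Poincaré absorption of the indefinite curvature effective while assuming only smallness of $D^2u_0$ is precisely the technical advance over \cite{STW}.

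Once $S$ is controlled the equation is uniformly parabolic with a uniform $C^2$-bound, and I would upgrade regularity in the usual way: interior parabolic regularity of Evans--Krylov/Krylov--Safonov type (using the concavity of the branch of the Lagrangian phase operator determined by $\theta_0$, which persists under the smallness assumption) gives a uniform $C^{2,\alpha}$-estimate, and Schauder bootstrapping then yields uniform-in-time $C^k$-bounds for every $k$. These bounds exclude finite-time singularities and give long-time existence on $[0,\infty)$, with the graphs $\Sigma_{\hat\chi_t}$ remaining in a fixed small $C^\infty$-neighborhood of $\Sigma_{\hat\chi}$.

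For the exponential convergence I would again exploit the clean heat equation for $\theta$. Using the uniform two-sided bound $\widetilde\eta\sim g$, a Harnack-type inequality for $\partial_t\theta=\Delta_{\widetilde\eta}\theta$ together with the uniform spectral gap of $\Delta_{\widetilde\eta}$ force $\mathrm{osc}_X\theta(t)\le Ce^{-\lambda t}$, so $\theta(\cdot,t)\to\theta_\infty$ for a constant $\theta_\infty$, exponentially and, after interpolating against the higher-order bounds, in $C^\infty$. Writing $\tilde u=u-\theta_\infty t$, the flow becomes $\partial_t\tilde u=\theta-\theta_\infty\to0$ exponentially, hence $\tilde u\to u_\infty$ in $C^\infty$ with an exponential rate and $du_t=d\tilde u_t\to du_\infty$ exponentially; the limiting graph of $\hat\chi+du_\infty$ then has constant phase, i.e. is a special Lagrangian in the class $[\hat\chi]$ that is $C^\infty$-close to $\Sigma_{\hat\chi}$. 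Since an exact harmonic $1$-form on a compact manifold vanishes, such special Lagrangian graphs are isolated in their cohomology class; this forces $du_\infty=0$, identifies the limit with $\Sigma_{\hat\chi}$ and $\theta_\infty$ with $\theta_0$, and propagates the exponential rate to $du_t\to0$, giving the asserted $C^\infty$-exponential convergence.
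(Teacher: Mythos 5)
Your proposal has the right overall skeleton (smallness of the Hessian preserved $\Rightarrow$ higher-order estimates $\Rightarrow$ long-time existence $\Rightarrow$ Harnack/oscillation decay $\Rightarrow$ exponential convergence), and your convergence scheme is indeed close to the paper's Section \ref{sec-5}. But the central step -- the $C^2$ estimate -- contains a genuine gap, in two places. First, the pointwise inequality you claim, $\partial_t S\le \Delta_{\widetilde\eta}S-c\,|\nabla D^2u|^2+C S$, is not what the computation produces. Commuting covariant derivatives yields curvature terms coupled to the \emph{gradient}, of type $Du\ast DRm\ast D^2u$ (compare the terms $C(1+\rho)\vartheta$ in Lemma \ref{0824018}), which are not controlled by $S=|D^2u|^2$; worse, because $\hat\chi\neq 0$ enters both $\hat\chi_u$ and $\eta$ in \eqref{GLMCF1}, there are terms \emph{linear} in $|D^2u|$, which dominate $S$ precisely in the small-$S$ regime and destroy any ``smallness is preserved'' statement unless they are cancelled. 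The paper cancels them only by using the special Lagrangian hypothesis quantitatively, in the form $\hat\eta^{pq}\hat\chi_{p,qi}=0$ and its derivative (\eqref{0822042}, \eqref{0822043}), rewriting every dangerous term through $\eta^{-1}-\hat\eta^{-1}=O(\sqrt{\rho})$; your proposal never invokes the special Lagrangian condition at this step. Second, your proposed cure for the surviving reaction term -- integrate and absorb $C\int S$ via a Poincar\'e inequality against $-c\int|\nabla D^2u|^2$ -- cannot work: the constant $C$ is determined by $\|Rm\|_{C^1}$ and $\|\hat\chi\|_{C^3}$, while the Poincar\'e constant is the spectral gap of $(X,g)$; these are unrelated, and taking $\delta_0$ small does not make $C$ small (and an $L^2$ bound on $D^2u$ would not give uniform parabolicity anyway). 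The paper's actual mechanism (Section \ref{sec-4.1}) is pointwise and genuinely different: it forms $Q=\rho+K_1\vartheta+K_2\tau$ with $\vartheta=|Du|_g^2$ and $\tau=(u-u_0(p)-\theta(\hat\chi)t)^2$, and uses the \emph{negative} terms $-2\eta^{ij}u_iu_j$ and $-2\eta^{pq}g^{ij}u_{ip}u_{jq}$ in the evolutions of $\tau$ and $\vartheta$ (Lemmas \ref{0824001}, \ref{0824008}) to absorb, for suitable $K_1,K_2$, exactly the curvature-times-gradient and curvature-times-Hessian terms you cannot otherwise control; the maximum principle then preserves smallness of $Q$ (Theorem \ref{0824031}). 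Without this coupling idea (or a substitute), your argument does not get off the ground: Gronwall applied to $+CS$ only gives growth like $e^{Ct}$.

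A secondary flaw is the identification of the limit. Your assertion that ``an exact harmonic $1$-form vanishes'' forces $du_\infty=0$ is not a proof: the limit $\hat\chi+du_\infty$ has constant Lagrangian angle, which is not harmonicity, and ``isolated in the cohomology class'' is weaker than what you need. The correct statement is the uniqueness of special Lagrangian graphs in a fixed class (Theorem \ref{0815100}), proven by a maximum principle along the segment $\chi_0+t\,du$ using $\theta(\chi_1)-\theta(\chi_0)=\tilde\eta^{pq}u_{pq}$ with $\tilde\eta^{pq}$ positive definite; this is a short but necessary lemma. Also a misconception worth flagging: the vanishing of the Ricci form of $\widehat\nabla$ (Theorem \ref{0822015}) does not ``simplify the reaction terms'' in the Hessian evolution -- those involve the full Riemann tensor of $g$ -- its role is to guarantee that the Lagrangian condition is preserved along the flow.
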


Recently, Lee-Tsai \cite{LT24} proved a similar stability result for the Lagrangian mean curvature flow, which states that a minimal Lagrangian is stable under the Lagrangian mean curvature flow in the K\"ahler-Einstein manifold of non-positive curvature.

Assume that the generalized mean curvature flow $\Sigma_{t\in [0,T)}$ is given as the graph of closed 1-form $\hat\chi_t$. Since the positive constant $\delta_0$ in Theorem \ref{0825003} should be chosen sufficiently small, the oscillation of the Lagrangian angle $\theta(\hat \chi_{0})$ of $\Sigma_{\hat\chi_0}$ is also small, which implies that there exists a smooth functions $u_t:X\to \mathbb{R}$ such that $\hat\chi_t=\hat\chi+du_t$ (Theorem \ref{0822045}). Then by using Smoczyk-Tsui-Wang's results (Proposition $5.2$ in \cite{STW}), we conclude that $u_t$ also satisfies equation
\begin{equation}\label{GLMCF1}
\frac{\partial u_t}{\partial t} =\theta(\hat\chi_{t})=\frac{1}{\sqrt{-1}}\log \frac{\det (g_{i j}+\sqrt{-1}(\hat\chi_{t})_{j,i})}{\sqrt{\det g_{ij}}\sqrt{\det (\eta_{t})_{ij}}}
\end{equation}
with initial value $u_0$, where $(\hat\chi_{t})_{j,i}$ is the covariant derivative of $\hat\chi_{t}$ with respect to $g$, and $(\eta_t)_{ij}=g_{ij}+(\hat\chi_{t})_{k,i}g^{kl}(\hat\chi_{t})_{l,j}$ is the induced metric on $\Sigma_{\hat\chi_{t}}$. Therefore, in the case of Theorem \ref{0825003}, the generalized Lagrangian mean curvature flow \eqref{0825005} can be written globally as equation \eqref{GLMCF1}. 

The key step in proving Theorem \ref{0825003} is to prove that  the generalized Lagrangian mean curvature flow \eqref{GLMCF1} keeps the smallness of $|D^2u_t|^2_g$. Only by proving this property can we deduce the $C^3$-estimate and then the high order estimates, long-time existence and convergence. In the proof of this property, the first difficulty is how to deal with terms that contains the curvature of $g$. In \cite{STW}, Smoczyk-Tsui-Wang proved this property along the flow \eqref{GLMCF0} under the positive curvature condition on $g$. More precisely, by choosing a normal coordinate system with respect to $g$ near a point to diagonalize the Hessian of $u_t$ with $(u_t)_{ij}=\lambda_i \delta_{ij}$, where $u_t$ is a solution of equation \eqref{GLMCF0}, they deduced the following excellent expressions (see Proposition $6.1$ and Proposition $6.2$ in \cite{STW})
\begin{equation}\label{0826001}
\frac{\partial}{\partial t}\vartheta-\widetilde\eta_t^{ij}\vartheta_{ij}=-2\sum_{i=1}^n\frac{\lambda_i^2}{1+\lambda_i^2}-2\sum_{p=1}^n\frac{1}{1+\lambda_p^2}\left(\sum_{l,i}R_{lppi}(u_t)_l(u_t)_i\right)
\end{equation}
and
\begin{equation}\label{0826002}
\begin{split}
\frac{\partial}{\partial t}\widetilde{\rho}-\widetilde\eta_t^{ij}\widetilde{\rho}_{ij}&=\sum_{p,q,k}\frac{-1+\lambda_p\lambda_q-\lambda_k(\lambda_p+\lambda_q)}{(1+\lambda_p^2)(1+\lambda_q^2)(1+\lambda_k^2)}(u_t)^2_{pqk}\\
&\ \ \ -\sum_{p<k}\frac{2(\lambda_p-\lambda_k)^2}{(1+\lambda_p^2)(1+\lambda_k^2)}R_{kppk}+\sum_{p,k,l}\frac{(\lambda_k-\lambda_p)}{(1+\lambda_p^2)(1+\lambda_k^2)}(u_t)_lR_{klpk,p},
\end{split}
\end{equation}
where 
\begin{equation}\label{0826003}
\vartheta=|D u_t|_g^2\ \ \ \text{and}\ \ \ \widetilde{\rho}=\frac{1}{2}\log\frac{\det\widetilde\eta_t}{\det g},
\end{equation}
$(\widetilde\eta_t)_{ij}=g_{ij}+(u_t)_{ki}g^{kl}(u_t)_{lj}$, the covariant derivatives are all with respect to $g$, $Rm$ is the curvature tensor of $g$ and $R_{kppk}$ is the sectional curvature of $g$. If $(X,g)$ is a standard round sphere of constant sectional curvature (or a compact Riemannian manifold of positive sectional curvature), the maximum principle implies that the smallness of $\widetilde \rho$ is preserved along the flow \eqref{GLMCF0} and so is $|D^2u_t|^2_g$. Then using this property, Smoczyk-Tsui-Wang proved $C^3$-estimate and obtained high order estimates by the standard parabolic Schauder estimates. The convergence follows from \eqref{0826001}.

In our case, to control the terms containing the curvature of $g$ in the evolution equation of $|D^2u_t|^2_g$, we use the evolutions of $\left(u_t-u_0(p)-\theta(\hat \chi)t\right)^2$ and $|D u_t|^2_g$. In fact, there exist negative terms in these evolutions, which can be used to cancel the terms containing the curvature of $g$ (see section \ref{sec-4.1} for more details).

The second difficulty is due to the appearance of $|D^2u_t|_g$ in the evolution equation of $|D^2u_t|^2_g$. This type term appears due to the $1$-form $\hat\chi$ contained in $\hat\chi_t$ and $\eta_t$ in equation \eqref{GLMCF1}. However, it does not appear in \cite{STW} since $\hat\chi=0$. In fact, $|D^2u_t|_g$ is a bad term because it is larger than $|D^2u_t|^2_g$ when it is small. To cancel it, we use the trick $(\eta_t)_{ij}-\hat\eta_{ij}$, and then the new terms brought by $\hat\eta_{ij}$ can be cancelled by the assumption that $\hat\chi$ induces a special Lagrangian submanifold in $T^*X$, that is, $\theta(\hat\chi)$ is a constant (see Lemma \ref{0824018} for more details).

In addition to the above problems, there is a difficulty when we consider the convergence. That is, we can not get the convergence directly by \eqref{0826001} as in \cite{STW} due to the lack of  assumption on the positivity of the curvature of $g$. Here, our approach is to prove a Harnack-type inequality coupled with the generalized mean curvature flow \eqref{GLMCF1}, and then we deduce the convergence in $C^\infty$-sense. Furthermore, the convergence can be improved to be exponentially fast.

The remaining of this paper is organized as follows. In section \ref{sec-2}, we first review the geometry on cotangent bundle and the generalized Lagrangian mean curvature flow of graph case in cotangent bundle. Then we prove the uniqueness of special Lagrangian submanifold of graph case in cotangent bundles, and we give a global expression of the generalized Lagrangian mean curvature flow of graph case in cotangent bundles under a certain assumption. In section \ref{sec-3}, we give some evolution equations along the generalized Lagrangian mean curvature flow. In section \ref{sec-4}, we show the smallness of the norm of $D^2u_t$ is preserved along the generalized mean curvature flow and then prove the long-time existence of the flow. At last, we prove a Harnack-type inequality coupled with the generalized mean curvature flow, and then we deduce the exponentially convergence in $C^\infty$-sense in section \ref{sec-5}.

\medskip

{\bf Acknowledgements.} J.W. Liu would like to thank his postdoctoral supervisors, Professor Pengfei Guan and Professor Miles Simon, for their careful guidance on geometric flow. Both authors would like to thank Professor Chung-Jun Tsai for sending us their preprint.

\section{Preliminaries}
\label{sec-2}
In this section, we review the geometry on cotangent bundle and the generalized Lagrangian mean curvature flow in cotangent bundle, most of them can be found in \cite{YI73,IV87,STW}.
\subsection{The geometry on cotangent bundle}
Let $(X,g)$ be an $n$-dimensional Riemannian manifold with a Riemannian metric $g$. Let $\{q^j\}_{i=1,\cdots,n}$ be a local coordinate system on $X$ and $D$ be the covariant derivative with respect to $g$, then there holds
\begin{equation}\label{0822001}
D_{\frac{\partial}{\partial q^i}}\frac{\partial}{\partial q^j}=\Gamma_{ij}^k \frac{\partial}{\partial q^k},
\end{equation}
where $\Gamma^{k}_{ij}$ is the Christoffel symbol of $g$. If we denote $R^{\repl{j}i}_{j\repl{i}kl}$ as the curvature tensor of $g$, then
\begin{equation}\label{0822002}
R\left(\frac{\partial}{\partial q^k},\frac{\partial}{\partial q^l}\right)\frac{\partial}{\partial q^j}=D_{\frac{\partial}{\partial q^k}}D_{\frac{\partial}{\partial q^l}} \frac{\partial}{\partial q^j}-D_{\frac{\partial}{\partial q^l}}D_{\frac{\partial}{\partial q^k}} \frac{\partial}{\partial q^j}=R^{\repl{j}i}_{j\repl{i}kl}\frac{\partial}{\partial q^i}.
\end{equation}
Therefore, $R^{\repl{j}i}_{j\repl{i}kl}$ can be expressed by the Christoffel symbols as
\begin{equation}\label{0822003}
R^{\repl{j}i}_{j\repl{i}kl}=\frac{\partial}{\partial q^k} \Gamma^{i}_{jl} -\frac{\partial}{\partial q^l} \Gamma^{i}_{jk}+ \Gamma^{i}_{pk}\Gamma^{p}_{jl}-\Gamma^{i}_{pl}\Gamma^{p}_{jk}.
\end{equation}
Let $M=T^*X$ be the cotangent bundle of $X$. Taking a coordinate system $\{q^i,p_i\}_{i=1,\cdots,n}$ on $M$ such that on overlapping charts with coordinate $\{\tilde q^i,\tilde p_i\}_{i=1,\cdots,n}$, we have
\begin{equation}\label{0822004}
\tilde p_i=\sum_{j=1}^{n}\frac{\partial q^j}{\partial \tilde q^i} p_j,\ \ \ i=1,\cdots,n
\end{equation}
and the canonical symplectic form on $M$ is given by
\begin{equation}\label{0822004}
\omega=\sum_{i=1}^n dq^i\wedge dp_i=-d\left(\sum_{i=1}^np_idq^i\right).
\end{equation}
Define 
\begin{equation}\label{0822005}
\theta_i=dp_i-\Gamma^k_{il}p_kdq^l\ \ \ \text{and}\ \ \ X_i=\frac{\partial}{\partial q^i} +\Gamma^{k}_{il}p_k\frac{\partial}{\partial p_l}, \ \ \ i=1,\cdots,n
\end{equation}
then $\left\{dq^i,\theta_i\right\}_{i=1,\cdots,n}$ forms a basis on $T^*M$, which is dual to the basis $\left\{X_i,\frac{\partial}{\partial p_i}\right\}_{i=1,\cdots,n}$ on $TM$. At this time, the bundle projection $\pi:M\to X$ satisfies
\begin{equation}\label{0822006}
d\pi\left(X_i\right)=\frac{\partial}{\partial q^i} \ \ \ \text{and}\ \ \ d\pi\left(\frac{\partial}{\partial p_i}\right)=0.
\end{equation}
Therefore, the connection $D$ generates two distributions $\mathcal{H}$, $\mathcal{V}$ in $TM$, which are called the horizontal distribution and the vertical distribution of $TM$ respectively.

\begin{pro}
\label{0822007}
Let $M=T^*X$ be the cotangent bundle of Riemannian manifold $(X,g)$. The horizontal distribution $\mathcal{H}$ of  $TM$ is spanned by $X^i$ and the vertical distribution $\mathcal{V}$ is spanned by $\frac{\partial}{\partial p_i}$. In terms of these bases, the Riemmannian metric $G$ on $M$ (or on the tangent bundle $TM$ of $M$) satisfies
\begin{equation}\label{0822008}
G\left(\frac{\partial }{\partial p_i},\frac{\partial }{\partial p_j}\right)=g^{ij},\ \ \ G\left(X^i,\frac{\partial }{\partial p_j}\right)=0\ \ \ \text{ and }\ \ \ G\left(X^i,X^j\right)=g^{ij},
\end{equation}
where $X^i=g^{ij}X_j$. In terms of $\{dq^i, \theta_i\}_{i=1,\cdots,n}$, this metric can be expressed as
\begin{equation}\label{0822009}
G(\cdot,\cdot)=g^{ij}\theta_i\otimes \theta_j +g_{ij} dq^i\otimes dq^j.
\end{equation}
The almost complex structure $J$ on $TM$ is defined by
\begin{equation}\label{0822010}
\omega(\cdot,\cdot)=G(J\cdot,\cdot),
\end{equation}
and it satisfies
\begin{equation}\label{0822011}
JX^i=\frac{\partial}{\partial p_i},\ \ \ J\frac{\partial}{\partial p_i}=-X^i\ \ \ \text{ and }\ \ \ Jdq^i=-g^{ij}\theta_j.
\end{equation}
\end{pro}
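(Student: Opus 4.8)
The plan is to verify every stated identity by computing in the adapted horizontal--vertical frame, where the splitting, the metric $G$, the symplectic form $\omega$ and the endomorphism $J$ all have simple tensorial components. The first task is to record the duality between $\{X_i,\partial/\partial p_i\}$ and $\{dq^i,\theta_i\}$: substituting \eqref{0822005} gives $dq^i(X_j)=\delta^i_j$, $dq^i(\partial/\partial p_j)=0$, $\theta_i(\partial/\partial p_j)=\delta^i_j$, and $\theta_i(X_j)=\Gamma^m_{ji}p_m-\Gamma^k_{ij}p_k=0$, the last vanishing because $\Gamma$ is symmetric in its lower indices. Since $\mathcal{V}=\ker d\pi=\mathrm{span}\{\partial/\partial p_i\}$ and $\mathcal{H}$ is the horizontal complement determined by $D$, identity \eqref{0822006} shows $d\pi(X_i)=\partial/\partial q^i$, so $\mathcal{H}=\mathrm{span}\{X_i\}=\mathrm{span}\{X^i\}$, which establishes the first assertion.

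Next I would compute the metric. The canonical almost K\"ahler metric is characterized by declaring $\mathcal{H}\perp\mathcal{V}$, making the horizontal frame isometric to the base frame, and equipping the vertical fiber $T^*_xX$ with the dual metric; concretely $G(X_i,X_j)=g_{ij}$, $G(X_i,\partial/\partial p_j)=0$ and $G(\partial/\partial p_i,\partial/\partial p_j)=g^{ij}$. Raising indices via $X^i=g^{ij}X_j$ then gives $G(X^i,X^j)=g^{ia}g^{jb}g_{ab}=g^{ij}$ and $G(X^i,\partial/\partial p_j)=0$, which is \eqref{0822008}. Formula \eqref{0822009} is the same statement read on the dual side: the tensor $g^{ij}\theta_i\otimes\theta_j+g_{ij}\,dq^i\otimes dq^j$, evaluated on $\{X_a,\partial/\partial p_a\}$ and using the duality above, reproduces exactly these components, hence equals $G$.

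The main computation is the almost complex structure. I would first express $\omega$ in the adapted coframe: substituting $dp_i=\theta_i+\Gamma^k_{il}p_k\,dq^l$ into $\omega=\sum_i dq^i\wedge dp_i$ yields $\omega=\sum_i dq^i\wedge\theta_i+\sum_{i,l}\Gamma^k_{il}p_k\,dq^i\wedge dq^l$, and the last sum vanishes because $\Gamma^k_{il}$ is symmetric in $(i,l)$ while $dq^i\wedge dq^l$ is antisymmetric; hence $\omega=\sum_i dq^i\wedge\theta_i$. Evaluating on the frame leaves the single nonzero pairing $\omega(X_a,\partial/\partial p_b)=\delta^b_a$. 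Since $J$ is the unique endomorphism with $\omega(\,\cdot\,,\,\cdot\,)=G(J\,\cdot\,,\,\cdot\,)$, solving this fiberwise against the known $G$-components forces $JX^i=\partial/\partial p_i$ and $J\partial/\partial p_i=-X^i$; for instance, writing $JX_a=\alpha^iX_i+\beta_i\partial/\partial p_i$, the condition $G(JX_a,X_b)=\omega(X_a,X_b)=0$ gives $\alpha^i=0$ and $G(JX_a,\partial/\partial p_b)=\delta^b_a$ gives $\beta_i=g_{ai}$, i.e. $JX_a=g_{ai}\partial/\partial p_i$, equivalently $JX^i=\partial/\partial p_i$. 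This immediately yields $J^2=-\mathrm{Id}$, and the coframe identity $Jdq^i=-g^{ij}\theta_j$ follows by dualizing through $(J\alpha)(V)=\alpha(JV)$, since $(Jdq^i)(\partial/\partial p_a)=dq^i(-X^a)=-g^{ia}=(-g^{ij}\theta_j)(\partial/\partial p_a)$ while both sides annihilate $X_a$.

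I expect no genuine obstacle: the proposition is a frame computation, and the only points requiring care are the index bookkeeping in passing between $X_i$ and $X^i=g^{ij}X_j$, and the cancellation of the Christoffel cross-terms in $\omega$, which is the single place where the symmetry of the Levi-Civita connection is essential. Once $\omega=\sum_i dq^i\wedge\theta_i$ is established, determining $J$ reduces to linear algebra in each cotangent fiber.
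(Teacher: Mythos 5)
Your computation is correct. Note that the paper itself gives no proof of Proposition \ref{0822007}: it is recalled as part of the standard almost K\"ahler structure on $T^*X$, with the verification deferred to \cite{YI73,IV87,STW}. Your argument --- establishing the duality of $\{X_i,\partial/\partial p_i\}$ with $\{dq^i,\theta_i\}$ (where $\theta_i(X_j)=0$ rests on the symmetry of the Christoffel symbols), rewriting $\omega=\sum_i dq^i\wedge\theta_i$, and then solving $\omega(\cdot,\cdot)=G(J\cdot,\cdot)$ fiberwise as linear algebra against the known components of $G$ --- is exactly the standard verification one finds in those references, and your signs agree with the paper's conventions: $\omega(X_a,\partial/\partial p_b)=\delta^b_a$ matches $G(JX_a,\partial/\partial p_b)=g_{ai}g^{ib}$, and the dual formula $Jdq^i=-g^{ij}\theta_j$ follows correctly from $(J\alpha)(V)=\alpha(JV)$. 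Two small points could be tightened. First, your claim that $\mathcal{H}=\mathrm{span}\{X_i\}$ is ``the horizontal complement determined by $D$'' deserves one more line: the $D$-horizontal distribution is the common kernel of the forms $\theta_i$ (a vector is horizontal iff the covariant derivative of the tautological covector along it vanishes), and your identities $\theta_i(X_j)=0$, $d\pi(X_i)=\partial/\partial q^i$ plus a dimension count then give the claim. Second, you detail the solve only for $JX_a$ and assert $J\partial/\partial p_i=-X^i$; the analogous one-line computation ($G(J\partial/\partial p_a,X_b)=\omega(\partial/\partial p_a,X_b)=-\delta^a_b$ forces $J\partial/\partial p_a=-X^a$, with vanishing vertical part) should be recorded, since $J^2=-\mathrm{Id}$ is a consequence of both formulas together rather than of the first alone. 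With these additions the proof is complete.
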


Next, we recall the canonical connection $\widehat \nabla$ (\cite{IV87,STW}), which is defined by
\begin{equation}\label{0822012}
\widehat \nabla X^i =-\Gamma^{i}_{jk} dq^j\otimes X^k\ \ \ \text{ and }\ \ \ \widehat \nabla \frac{\partial}{\partial p_i}=-\Gamma^i_{jk}dq^j \otimes \frac{\partial}{\partial p_k}.
\end{equation}
This connection preserves the horizontal distribution and the vertical distribution. Also $X^i$ and $\frac{\partial }{\partial p_i}$ are parallel in the fiber direction. Since $\widehat \nabla$ is compatible with the Riemannian metric $G$ and the almost complex structure $J$ on $TM$ (that is, $\widehat\nabla G=0$ and $\widehat\nabla J=0$), the Ricci form $\widehat \rho$ of $\widehat\nabla$ is given by
\begin{equation}\label{0822013}
\widehat \rho(V,W)=\frac{1}{2}\sum\limits_{\alpha=1}^{2n}G(\widehat R(V,W)Je_\alpha, e_\alpha)=\frac{1}{2}\sum\limits_{\alpha=1}^{2n}\omega(\widehat R(V,W)e_\alpha, e_\alpha),
\end{equation}
where $\widehat R$ is the curvature tensor of $\widehat \nabla$ and $\left\{  e_\alpha \right\}_{\alpha=1,\cdots,2n}$ is an arbitrary orthonormal basis of $TM$. 

In \cite{SW2011}, Smoczyk-Wang defined the following Einstein connection.
\begin{defi} \label{0822016}
A connection $\widehat\nabla$ which is compatible with the K\"ahler metric $G$ and the almost complex structure $J$ on an almost K\"ahler manifold $(M, \omega, G, J)$ is called Einstein, if the Ricci form $\widehat\rho$ of $\widehat\nabla$ satisfies
\begin{equation}\label{0822014}
\widehat \rho=f\omega
\end{equation}
for some smooth function $f$ on $M$.
\end{defi}
In \cite{STW}, Smoczyk-Tsui-Wang proved that $\widehat\nabla$ defined in \eqref{0822012} on $M$ is an Einstein connection with vanishing Ricci form. 
\begin{thm}[Theorem $1$ in \cite{STW}]\label{0822015}
Let $(X,g)$ be a Riemannian manifold and $(\omega,G,J)$ be an almost K\"ahler structure defined on the cotangent bundle $M=T^*X$ with the canonical connection $\widehat \nabla$. Then the Ricci form $\widehat \rho$ of $\widehat \nabla$ defined in \eqref{0822013} vanishes. In particular, $\widehat \nabla$ is an Einstein connection in the sense of Definition \ref{0822016}.
\end{thm}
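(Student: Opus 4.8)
The plan is to avoid computing the full curvature tensor $\widehat R$ and instead to exploit the compatibility of the canonical connection $\widehat\nabla$ with the splitting $TM=\mathcal H\oplus\mathcal V$ together with the definition \eqref{0822013} of the Ricci form. First I would rewrite $\widehat\rho$ as a genuine endomorphism trace: since $\omega(\cdot,\cdot)=G(J\cdot,\cdot)$, for any orthonormal frame $\{e_\alpha\}$ one has
\[
\sum_\alpha\omega\big(\widehat R(V,W)e_\alpha,e_\alpha\big)=\sum_\alpha G\big((J\circ\widehat R(V,W))e_\alpha,e_\alpha\big)=\mathrm{tr}\big(J\circ\widehat R(V,W)\big),
\]
so that $\widehat\rho(V,W)=\tfrac12\,\mathrm{tr}\big(J\circ\widehat R(V,W)\big)$, an expression manifestly independent of the chosen basis. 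The task then reduces to showing that this trace vanishes for all $V,W$.

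Next I would record three structural facts, all immediate from the preliminaries. From the defining formulas \eqref{0822012}, $\widehat\nabla_Y$ maps horizontal sections to horizontal sections and vertical to vertical, for every direction $Y$; consequently the curvature endomorphism $\widehat R(V,W)=\widehat\nabla_V\widehat\nabla_W-\widehat\nabla_W\widehat\nabla_V-\widehat\nabla_{[V,W]}$ preserves both $\mathcal H$ and $\mathcal V$, i.e. it is block-diagonal with respect to $TM=\mathcal H\oplus\mathcal V$. By \eqref{0822011}, the almost complex structure $J$ interchanges $\mathcal H$ and $\mathcal V$, so it is block-off-diagonal. Therefore the composition $J\circ\widehat R(V,W)$ sends $\mathcal H$ into $\mathcal V$ and $\mathcal V$ into $\mathcal H$, and a block-off-diagonal endomorphism has vanishing trace. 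Combining with the first step gives $\widehat\rho(V,W)=\tfrac12\,\mathrm{tr}\big(J\circ\widehat R(V,W)\big)=0$ for all $V,W$; since $0=0\cdot\omega$, this shows at once that $\widehat\nabla$ is Einstein in the sense of Definition \ref{0822016} with vanishing function $f$.

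I expect the only point that genuinely needs care is the block-diagonality of $\widehat R(V,W)$: one must check that $\widehat\nabla$ preserves $\mathcal H$ and $\mathcal V$ not merely on the frame fields $X^i,\ \partial/\partial p_i$ but on arbitrary sections, which follows from the Leibniz rule together with the fact that $\widehat\nabla X^i$ and $\widehat\nabla\,\partial/\partial p_i$ again lie in $\mathcal H$ and $\mathcal V$ respectively. As an independent confirmation, and to display the Einstein function concretely as $f\equiv 0$, I would also compute the curvature of the induced connection on the determinant line bundle: in the complex frame $\{X^i\}$ of $(TM,J)$ the connection matrix read off from \eqref{0822012} is $\widehat\omega^i{}_k=-\Gamma^i_{jk}\,dq^j$, whose trace is $-\sum_i\Gamma^i_{ji}\,dq^j=-d\log\sqrt{\det g}$ by the standard identity $\sum_i\Gamma^i_{ij}=\partial_j\log\sqrt{\det g}$; since $\mathrm{tr}(\widehat\omega\wedge\widehat\omega)=0$ by antisymmetry, the trace of the curvature is $\mathrm{tr}\,\widehat\Omega=d(\mathrm{tr}\,\widehat\omega)=-d\,d\log\sqrt{\det g}=0$, which is the Ricci form up to normalization. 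Either route establishes the theorem; the block-structure trace argument is the cleaner one and is the one I would present.
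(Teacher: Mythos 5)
Your proof is correct; note, however, that there is no proof in this paper to compare it against: Theorem \ref{0822015} is quoted as Theorem 1 of \cite{STW}, and the present paper simply defers to that reference, where the result is established by working directly with the curvature of the canonical connection. Your argument is therefore an independent and more structural route, and it holds up. It rests on three facts, all genuinely available in the preliminaries: (i) the identity $\widehat\rho(V,W)=\tfrac12\,\mathrm{tr}\big(J\circ\widehat R(V,W)\big)$, which follows from \eqref{0822010} together with the frame-independence of the trace of an endomorphism; (ii) the fact that $\widehat\nabla$ preserves the splitting $TM=\mathcal H\oplus\mathcal V$ (stated right after \eqref{0822012}, and correctly upgraded by you from the frame fields $X^i$, $\partial/\partial p_i$ to arbitrary sections via the Leibniz rule), which forces each of the three terms in $\widehat R(V,W)=\widehat\nabla_V\widehat\nabla_W-\widehat\nabla_W\widehat\nabla_V-\widehat\nabla_{[V,W]}$, and hence $\widehat R(V,W)$ itself, to be block-diagonal; and (iii) the fact \eqref{0822011} that $J$ interchanges $\mathcal H$ and $\mathcal V$. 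A block-diagonal endomorphism composed with a block-off-diagonal one is block-off-diagonal, hence traceless, so $\widehat\rho\equiv 0=0\cdot\omega$ and $\widehat\nabla$ is Einstein with $f\equiv 0$. What this buys is transparency: it isolates the single geometric feature of the cotangent-bundle structure responsible for the vanishing (the canonical connection respects a splitting that $J$ swaps), with no curvature components ever computed. One small imprecision in your secondary check via the determinant line bundle: since $\widehat R(V,W)$ commutes with $J$, the Ricci form equals, up to sign, the \emph{imaginary part} of the complex trace $\mathrm{tr}_{\mathbb C}\widehat R(V,W)$, not a scalar multiple of it, so ``the Ricci form up to normalization'' is loose; but since your computation shows the entire complex trace vanishes ($\mathrm{tr}(\widehat\omega\wedge\widehat\omega)=0$ and $d\,\mathrm{tr}\,\widehat\omega=-dd\log\sqrt{\det g}=0$), the conclusion is unaffected.
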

Hence the cotangent bundle of a Riemannian manifold $(X,g)$ admits a naturally defined almost K\"ahler structure $(\omega,G,J)$ and a canonical connection $\widehat\nabla$ that is symplectic (that is, $\widehat\nabla\omega=0$), compatible with the K\"ahler metric $G$ and the almost complex structure $J$. Moreover, the Ricci form $\widehat\rho$ of $\widehat\nabla$ vanishes. 

\subsection{The generalized mean curvature flow in cotangent bundle}
\label{sec-2.2}
We denote the projections of $TM$ onto the horizontal distribution $\mathcal{H}$ and the vertical distribution $\mathcal{V}$ by $\pi_1$ and $\pi_2$ respectively. In terms of $\theta_i$ and $dq^i$, we have
\begin{equation}\label{0822017}
\pi_1=dq^i\otimes X_i \ \ \ \text{ and }\ \ \ \pi_2=\theta_i\otimes \frac{\partial }{\partial p_i}.
\end{equation}
Since $J$ interchanges $\mathcal{H}$ and $\mathcal{V}$, we get
\begin{equation}\label{0822018}
J\circ \pi_1=\pi_2 \circ J\ \ \ \text{ and }\ \ \ J\circ\pi_2=\pi_1 \circ J.
\end{equation}
With respect to these structures, Smoczyk-Tsui-Wang \cite{STW} defined an $n$-form $\Omega$ on $M$.
\begin{defi}
\label{0822019}
The $n$-form $\Omega$ is defined as
\begin{equation}\label{0822019}
\Omega=\sqrt{\det g_{ij}}(dq^1-\sqrt{-1}Jdq^1)\wedge \cdots \wedge (dq^n-\sqrt{-1}Jdq^n).
\end{equation}
It can be viewed as an $(n,0)$-form in the sense that
\begin{equation}\label{0822020}
\Omega(JV_1,V_2,\cdots,V_n)=\sqrt{-1}\Omega(V_1,\cdots,V_n).
\end{equation}
\end{defi}
\begin{pro}[Proposition $2.2$ in \cite{STW}]\label{0822021}
The $(n,0)$-form $\Omega$ on $M$ is parallel with respect to the connection $\widehat \nabla$.
\end{pro}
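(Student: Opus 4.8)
The plan is to verify $\widehat\nabla\Omega=0$ by a direct computation in the coframe $\{dq^i,\theta_i\}$. First I would rewrite the defining factors of $\Omega$ using \eqref{0822011}: since $Jdq^i=-g^{ij}\theta_j$, each factor becomes $dq^i-\sqrt{-1}Jdq^i=dq^i+\sqrt{-1}\,g^{ij}\theta_j=:\phi^i$, so that
\begin{equation*}
\Omega=\sqrt{\det g_{ij}}\;\phi^1\wedge\cdots\wedge\phi^n.
\end{equation*}
The strategy is then to show that each $\phi^i$ is covariantly constant up to the Christoffel connection form, namely $\widehat\nabla\phi^i=-\Gamma^i_{lk}\,dq^l\otimes\phi^k$, exactly as $dq^i$ transforms. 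Taking the wedge will produce only the trace $-\Gamma^i_{li}\,dq^l$, and this will be cancelled precisely by the derivative of the density $\sqrt{\det g_{ij}}$.

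To carry this out, I would first determine how $\widehat\nabla$ acts on the coframe $\{dq^i,\theta_i\}$ dual to $\{X_j,\partial/\partial p_j\}$. Lowering an index in \eqref{0822012} and using $\widehat\nabla G=0$ gives $\widehat\nabla X_j=\Gamma^k_{lj}\,dq^l\otimes X_k$; dualizing through $\langle dq^i,X_j\rangle=\delta^i_j$ and $\langle dq^i,\partial/\partial p_j\rangle=0$ then yields
\begin{equation*}
\widehat\nabla dq^i=-\Gamma^i_{lj}\,dq^l\otimes dq^j,\qquad \widehat\nabla\theta_i=\Gamma^j_{li}\,dq^l\otimes\theta_j.
\end{equation*}
Feeding these into $\phi^i=dq^i+\sqrt{-1}\,g^{ij}\theta_j$ and expanding $\widehat\nabla(g^{ij}\theta_j)=dg^{ij}\otimes\theta_j+g^{ij}\widehat\nabla\theta_j$, the coefficient of $dq^l\otimes\theta_j$ collapses to $-\Gamma^i_{lk}g^{kj}$ precisely because of the metric-compatibility identity $\partial_l g^{ij}+\Gamma^i_{lk}g^{kj}+\Gamma^j_{lk}g^{ik}=0$; this confirms $\widehat\nabla\phi^i=-\Gamma^i_{lk}\,dq^l\otimes\phi^k$.

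Finally I would assemble the top form. Since $\widehat\nabla$ is a derivation on the wedge and only the $k=i$ term survives in each slot, one gets $\widehat\nabla(\phi^1\wedge\cdots\wedge\phi^n)=-\bigl(\sum_i\Gamma^i_{li}\bigr)dq^l\otimes(\phi^1\wedge\cdots\wedge\phi^n)$. On the other hand, treating $\sqrt{\det g_{ij}}$ as a scalar pulled back from $X$, the standard identity $\tfrac12 g^{pq}\partial_l g_{pq}=\Gamma^p_{lp}$ gives $d\sqrt{\det g_{ij}}=\sqrt{\det g_{ij}}\,\Gamma^p_{lp}\,dq^l$. Adding the two contributions, the trace terms cancel exactly and $\widehat\nabla\Omega=0$. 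The main obstacle is purely the index bookkeeping in the middle step: getting the signs and raised/lowered placements right when dualizing \eqref{0822012}, and recognizing that the $g^{ij}$-coefficients are handled by $\widehat\nabla G=0$. Conceptually the point to highlight is that the density factor $\sqrt{\det g_{ij}}$ is exactly the normalization that turns the naively non-parallel $\phi^1\wedge\cdots\wedge\phi^n$ into a genuinely parallel $(n,0)$-form; one could equivalently frame this as computing the connection $1$-form of $\widehat\nabla$ on the canonical line bundle $\Lambda^{n,0}$ in the trivialization $\Omega$ and checking that it vanishes identically.
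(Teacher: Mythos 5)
Your computation is correct and complete: the dualized formulas $\widehat\nabla dq^i=-\Gamma^i_{lj}\,dq^l\otimes dq^j$ and $\widehat\nabla\theta_i=\Gamma^j_{li}\,dq^l\otimes\theta_j$, the resulting identity $\widehat\nabla\phi^i=-\Gamma^i_{lk}\,dq^l\otimes\phi^k$ (via $\partial_l g^{ij}+\Gamma^i_{lk}g^{kj}+\Gamma^j_{lk}g^{ik}=0$), and the cancellation of the trace $\sum_i\Gamma^i_{li}$ against $d\log\sqrt{\det g_{ij}}=\Gamma^p_{lp}\,dq^l$ all check out. Note that this paper does not reprove the proposition but quotes it from \cite{STW}; your argument is essentially the same direct coframe computation as in that source, so there is nothing to flag beyond the minor point that deriving $\widehat\nabla X_j=\Gamma^k_{lj}\,dq^l\otimes X_k$ really uses the metric compatibility of the Levi--Civita connection on $(X,g)$ (equivalently $\widehat\nabla G=0$ restricted to the horizontal distribution), as you correctly indicate.
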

Now we recall the definition of the generalized mean curvature vector field of a Lagrangian immersion and relate it to the Lagrangian angle through the above holomorphic $n$-form $\Omega$. As in the classical theory, an immersion $F:\Sigma\to M=T^*X$ with dimension $n$ is Lagrangian if $F^*\omega|_\Sigma=0$. In this paper, we identify $\Sigma$ with the image of the Lagrangian immersion and refer $\Sigma$ as a Lagrangian submanifold if there is no confusion.

As described in \cite{STW} (see also \cite{SW2011} for the general case), the generalized mean curvature form on $\Sigma$ is defined by
\begin{equation}\label{0822022}
\nu_i= \sum_{k=1}^{n} \langle \widehat \nabla_{e_i}e_k, Je_k \rangle,\ \ \ i=1,\cdots,n
\end{equation}
where $\{e_i\}_{i=1,\cdots,n}$ is an orthonormal basis with respect to the induced metric on $\Sigma$ by the immersion $F$. The generalized mean curvature vector $\widehat H$ is defined as
\begin{equation}\label{0822023}
\widehat H=\sum_{i=1}^{n} \nu_i Je_i.
\end{equation}

A half dimensional subspace $L$ in $\mathbb{C}^n$ with the standard symplectic form is called Lagrangian if $L^\perp =L$. Given a Lagrangian subspace $L_0$ in $\mathbb{C}^n$, the Lagrangian angle of another Lagrangian subspace $L_1$ is defined by the argument of $\det U$, where $U$ is a unitary $n\times n$ matrix such that $L_1=UL_0$. More precisely, if we choose an orthogonal basis $e_1^i,\cdots,e_n^i$ for $L_i$ and set
\begin{equation}\label{0822024}
\nu_k^i=\frac{1}{\sqrt{2}}(e_k^i-\sqrt{-1}J e_k^i),\ \ \ \ i=0,1
\end{equation}
to be the associated holomorphic basis. Then the argument of $\det U_k^l$ with $\nu_k^1=U_{k}^l \nu_l^0$ is the Lagrangian angle of $L_1$ with respect to $L_0$. In \cite{STW}, Smoczyk-Tsui-Wang derived the following formula for the Lagrangian angle in terms of arbitrary basis.
\begin{lem}[Lemma $3.1$ in \cite{STW}]\label{0822028}
Suppose that $(V,\langle\cdot,\cdot\rangle)$ is an $2n$-dimensional (real) inner product space with a compatible almost complex structure $J$ (that is, $J$ is an isometry and $J^2=-\operatorname{\mathop{id}})$. Let $L_0$ be a fixed Lagrangian subspace of $V$ spanned by $\overline{v}_1,\cdots,\overline{v}_n$. Suppose that $L_1$ is another Lagrangian subspace spanned by $v_1,\cdots,v_n$ and that $v_i=\sum_{j=1}^n(\alpha_{ij}\overline{v}_j + \beta_{ij}J\overline{v}_j)$ for $i=1,\cdots,n$. Then the Lagrangian angle $\theta$ of $L_1$ with respect to $L_0$ is the argument of $\det(\alpha_{ij}+\sqrt{-1}\beta_{ij})$. In fact, they are related by
\begin{equation}\label{0822025}
\frac{\det(\alpha_{ij}+\sqrt{-1}\beta_{ij})\sqrt{\det \langle\overline{v}_i,\overline{v}_j \rangle}}{\sqrt{\det \langle{v}_i,{v}_j \rangle}} =e^{\sqrt{-1}\theta}.
\end{equation}
\end{lem}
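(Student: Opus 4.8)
The plan is to transport the entire computation into the complexification $V_{\mathbb C}=V\otimes_{\mathbb R}\mathbb C$, where the compatible structure $J$ (extended $\mathbb C$-linearly) splits $V_{\mathbb C}=V^{1,0}\oplus V^{0,1}$ into its $\pm\sqrt{-1}$-eigenspaces. The natural projection onto the holomorphic part is the $\mathbb R$-linear map $\Phi(u)=\tfrac{1}{\sqrt{2}}(u-\sqrt{-1}Ju)$; one checks at once that $\Phi(u)\in V^{1,0}$ and that $\Phi$ is precisely the operation $\nu_k^i=\tfrac{1}{\sqrt{2}}(e_k^i-\sqrt{-1}Je_k^i)$ used to define the holomorphic bases. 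Extending $\langle\cdot,\cdot\rangle$ to a Hermitian form on $V_{\mathbb C}$, the Lagrangian condition $\langle Je_j,e_k\rangle=0$ shows that $\Phi$ sends any orthonormal real basis of a Lagrangian subspace to a Hermitian-orthonormal basis of $V^{1,0}$ (the factor $\tfrac{1}{\sqrt{2}}$ being exactly what produces $h(\nu_k,\nu_l)=\delta_{kl}$). In these terms the definition of the Lagrangian angle reads $\nu_k^1=\sum_l U_{kl}\nu_l^0$ with $U$ unitary and $\theta=\arg\det U$, so that $\det U=e^{\sqrt{-1}\theta}$.

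Next I would carry out the one genuinely computational step: applying $\Phi$ to the expansion $v_i=\sum_j(\alpha_{ij}\overline v_j+\beta_{ij}J\overline v_j)$. Using $J^2=-\operatorname{id}$ together with the identity $\beta_{ij}-\sqrt{-1}\alpha_{ij}=-\sqrt{-1}(\alpha_{ij}+\sqrt{-1}\beta_{ij})$, the terms regroup as
\begin{equation*}
\Phi(v_i)=\sum_j(\alpha_{ij}+\sqrt{-1}\beta_{ij})\,\tfrac{1}{\sqrt{2}}\big(\overline v_j-\sqrt{-1}J\overline v_j\big)=\sum_j(\alpha_{ij}+\sqrt{-1}\beta_{ij})\,\Phi(\overline v_j).
\end{equation*}
Thus $A:=(\alpha_{ij}+\sqrt{-1}\beta_{ij})$ is precisely the transition matrix from the holomorphic images $\{\Phi(\overline v_j)\}$ of the $L_0$-basis to the holomorphic images $\{\Phi(v_i)\}$ of the $L_1$-basis. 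This is the heart of the lemma: it identifies $\det(\alpha_{ij}+\sqrt{-1}\beta_{ij})$ geometrically, independently of any orthonormalization.

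It then remains to compare $A$ with the unitary $U$ from the definition. Writing the arbitrary bases in terms of the chosen orthonormal ones, $\overline v_j=\sum_l P_{jl}e_l^0$ and $v_i=\sum_k Q_{ik}e_k^1$ with real matrices $P,Q$, the $\mathbb R$-linearity of $\Phi$ gives $\Phi(\overline v_j)=\sum_l P_{jl}\nu_l^0$ and $\Phi(v_i)=\sum_k Q_{ik}\nu_k^1$. Substituting into the identity above and using that $\{\nu_l^0\}$ is a $\mathbb C$-basis of $V^{1,0}$ (which follows from $L_0\cap JL_0=\{0\}$, valid since $L_0$ is Lagrangian) yields the matrix relation $QU=AP$, hence $\det A=(\det Q/\det P)\det U$. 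A short Gram-matrix computation then gives $\det\langle\overline v_i,\overline v_j\rangle=\det(PP^{T})=(\det P)^2$ and $\det\langle v_i,v_j\rangle=(\det Q)^2$, so $\sqrt{\det\langle\overline v_i,\overline v_j\rangle}=|\det P|$ and $\sqrt{\det\langle v_i,v_j\rangle}=|\det Q|$. Assembling these,
\begin{equation*}
\frac{\det(\alpha_{ij}+\sqrt{-1}\beta_{ij})\sqrt{\det\langle\overline v_i,\overline v_j\rangle}}{\sqrt{\det\langle v_i,v_j\rangle}}=\operatorname{sgn}(\det P\,\det Q)\,\det U=\det U=e^{\sqrt{-1}\theta},
\end{equation*}
which is the claim.

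The main obstacle I anticipate is not the algebra but the bookkeeping of orientations: the factor $\operatorname{sgn}(\det P\,\det Q)$ must equal $+1$, which amounts to fixing the orthonormal bases $\{e_l^0\},\{e_k^1\}$ to be positively oriented relative to $\{\overline v_j\},\{v_i\}$ — precisely the orientation data needed to make $\arg\det U$ itself well-defined. I would therefore take care to verify rigorously that $\Phi$ restricts to an isomorphism carrying orthonormal real Lagrangian bases to Hermitian-orthonormal holomorphic bases, so that simultaneously $U$ is genuinely unitary (hence $|\det U|=1$) and $\{\nu_l^0\}$ is a legitimate basis; these two facts underpin the linear-independence step used to extract $QU=AP$.
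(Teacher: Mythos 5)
Your proposal is correct, and you should know that this paper does not actually prove the lemma: it is quoted verbatim as Lemma $3.1$ of \cite{STW}, so the only internal comparison point is Remark \ref{0822026} and the way the formula is used in Proposition \ref{0822027}. Your argument is essentially the standard one (and the one in \cite{STW}): apply the holomorphic projection $\Phi=\tfrac{1}{\sqrt{2}}(\operatorname{\mathop{id}}-\sqrt{-1}J)$ to the expansion $v_i=\sum_j(\alpha_{ij}\overline v_j+\beta_{ij}J\overline v_j)$, obtaining $\Phi(v_i)=\sum_j(\alpha_{ij}+\sqrt{-1}\beta_{ij})\Phi(\overline v_j)$, observe that the Lagrangian condition makes $\Phi$ an isometry from each Lagrangian plane onto $V^{1,0}$ with the Hermitian extension of $\langle\cdot,\cdot\rangle$ (your computation $h(\Phi u,\Phi w)=\langle u,w\rangle+\sqrt{-1}\langle u,Jw\rangle$ with the second term vanishing on a Lagrangian is exactly right, as is using $L_0\cap JL_0=\{0\}$ to get a genuine $\mathbb{C}$-basis), and then compare Gram determinants via $QU=AP$, $\det(PP^{T})=(\det P)^2$. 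All the index bookkeeping checks out.

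The one point worth stressing is the sign $\operatorname{sgn}(\det P\,\det Q)$, which you correctly identify as the real content of the normalization: without orientation data the left-hand side of \eqref{0822025} itself changes sign under an orientation-reversing change of the basis $\{v_i\}$ or $\{\overline v_j\}$, so the Lagrangian angle of an unoriented Lagrangian plane is only well defined modulo $\pi$, not merely modulo $2\pi$ as Remark \ref{0822026} might suggest. Your resolution — choose the orthonormal bases $\{e^0_l\},\{e^1_k\}$ positively oriented relative to $\{\overline v_j\},\{v_i\}$, so $\det P>0$ and $\det Q>0$ — is the right one, and it is implicitly what \cite{STW} does; in the application (Proposition \ref{0822027} and the graph case), the bases come from coordinates and the graphical Lagrangians are canonically oriented, so the ambiguity is harmless there. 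No gap; the proof is complete as outlined.
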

\begin{rem}\label{0822026}
The Lagrangian angle is not uniquely defined, but it is defined up to adding an integer multiple of $2\pi$.
\end{rem}
Next, we consider the Lagrangian angle $\theta$ of the Lagrangian immersion $F:\Sigma\to M$ with respect to the horizontal distribution $\mathcal{H}$. Then we have the following proposition by using Lemma \ref{0822028}.
\begin{pro}[Proposition $3.1$ in \cite{STW}]\label{0822027}
Suppose that a Lagrangian submanifold $\Sigma$ of $M=T^*X$ is given by $F:\Sigma\to M$. Let $\{F_i\}_{i=1,\cdots,n}$ be an arbitrary basis tangential to $\Sigma$. Then the Lagrangian angle $\theta$ with respect to the horizontal distribution of $TM$ is
\begin{equation}\label{0822029}
\sqrt{-1}\theta=\log \det \left(G\left(F_i,X^j\right) +\sqrt{-1}G\left(F_i,\frac{\partial}{\partial p_j}\right)\right) +\frac{1}{2}\log \det g_{ij}- \frac{1}{2}\log \det \eta_{ij},
\end{equation}
where $\eta_{ij}=G\left( F_i,F_j\right)$.
\end{pro}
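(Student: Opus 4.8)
The plan is to apply Lemma \ref{0822028} fiberwise. Fix a point and set $V=T_{F(x)}M$ with the inner product $G$ and the compatible almost complex structure $J$. Take $L_1=dF(T_x\Sigma)$, which is Lagrangian by hypothesis, and take $L_0=\mathcal{H}_{F(x)}$, the horizontal distribution. I first note that $\mathcal{H}$ is indeed Lagrangian: for the spanning vectors $X^i$ one has, by \eqref{0822010}, \eqref{0822011} and \eqref{0822008},
\begin{equation*}
\omega(X^i,X^j)=G(JX^i,X^j)=G\!\left(\tfrac{\partial}{\partial p_i},X^j\right)=0,
\end{equation*}
so $\mathcal{H}$ is isotropic of dimension $n$, hence Lagrangian. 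Thus Lemma \ref{0822028} applies and computes the Lagrangian angle of $T_x\Sigma$ relative to $\mathcal{H}$, which is exactly the angle with respect to the horizontal distribution.

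The key choice is the reference basis of $L_0$. Rather than the natural-looking $X^j$, I take $\overline{v}_j=X_j$ (lower index), which also spans $\mathcal{H}$. With this choice the Gram matrix becomes $G(X_i,X_j)=g_{ij}$: indeed, from $X^i=g^{ij}X_j$ and $G(X^i,X^j)=g^{ij}$ one solves for $G(X_i,X_j)$ and finds $g_{ij}$. This produces the term $\sqrt{\det\langle\overline{v}_i,\overline{v}_j\rangle}=\sqrt{\det g_{ij}}$, i.e.\ the $+\tfrac12\log\det g_{ij}$ in \eqref{0822029}. (Had I used $X^j$, the Gram determinant would be $\det g^{ij}$, giving the wrong sign on the $\log\det g$ term.) Since $J\mathcal{H}=\mathcal{V}$, the vectors $\{X_j,\,JX_j\}$ form a basis of $V$, so every $F_i$ decomposes uniquely as $F_i=\sum_j(\alpha_{ij}X_j+\beta_{ij}JX_j)$, and $\langle v_i,v_j\rangle=G(F_i,F_j)=\eta_{ij}$ supplies the $-\tfrac12\log\det\eta_{ij}$ term.

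It remains to identify the coefficients $\alpha_{ij},\beta_{ij}$ with the inner products appearing in \eqref{0822029}. Using \eqref{0822011} I compute $JX_j=\sum_k g_{jk}\tfrac{\partial}{\partial p_k}$, so pairing the decomposition of $F_i$ against $X^j$ and against $\tfrac{\partial}{\partial p_j}$ and invoking the orthogonality relations \eqref{0822008} gives
\begin{equation*}
G(F_i,X^j)=\alpha_{ij},\qquad G\!\left(F_i,\tfrac{\partial}{\partial p_j}\right)=\beta_{ij},
\end{equation*}
since $G(X_l,X^j)=\delta_l^j$, $G(JX_l,X^j)=0$, $G(X_l,\tfrac{\partial}{\partial p_j})=0$ and $G(JX_l,\tfrac{\partial}{\partial p_j})=\delta_l^j$. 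Substituting $\alpha_{ij}+\sqrt{-1}\beta_{ij}=G(F_i,X^j)+\sqrt{-1}\,G(F_i,\tfrac{\partial}{\partial p_j})$ into the identity of Lemma \ref{0822028} and taking logarithms yields \eqref{0822029} exactly. The only subtle point—and the step I would flag—is the correct choice of reference frame $X_j$ versus $X^j$: this is what simultaneously fixes the sign of the metric term and makes the expansion coefficients coincide with the listed inner products; the rest is a direct bookkeeping of the relations in Proposition \ref{0822007}.
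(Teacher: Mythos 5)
Your proof is correct and takes exactly the route the paper intends: it applies Lemma \ref{0822028} with $L_0=\mathcal{H}$ and the reference basis $\overline{v}_j=X_j$, whose Gram matrix $G(X_i,X_j)=g_{ij}$ gives the $+\tfrac12\log\det g_{ij}$ term, and whose pairings correctly identify $\alpha_{ij}=G(F_i,X^j)$ and $\beta_{ij}=G\left(F_i,\tfrac{\partial}{\partial p_j}\right)$. The paper itself offers no details beyond the remark that the proposition follows ``by using Lemma \ref{0822028}'' (citing Proposition 3.1 of \cite{STW}), so your verification that $\mathcal{H}$ is Lagrangian and your coefficient bookkeeping supply precisely the omitted steps.
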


According to Proposition $3.2$ in \cite{STW}, the Lagrangian angle $\theta$ can be related to the $n$-form $\Omega$ by
\begin{equation}\label{0822030}
*(\Omega|_\Sigma)=e^{\sqrt{-1}\theta},
\end{equation}
where $*$ is the Hodge star operator on $\Sigma$ with respect to the induced metric $\eta_{ij}$ on $\Sigma$. The generalized mean curvature field $\widehat H$ of $\Sigma$ is given by
\begin{equation}\label{0822031}
\widehat H=J\nabla \theta,
\end{equation}
where $\nabla$ is the gradient operator on $\Sigma$ with respect to the induced metric. The special Lagrangian submanifold in the cotangent bundle of a Riemannian manifold is defined as
\begin{defi}\label{0822032}
A Lagrangian submanifold $\Sigma$ of $M=T^*X$ given by the immersion $F:\Sigma\to M$ is called a special Lagrangian submanifold if and only if its Lagrangian angle $\theta$ with respect to the horizontal distribution $\mathcal{H}$ of $TM$ is a constant.
\end{defi}

To find the special Lagrangian submanifold, Smoczyk-Wang \cite{SW2011} introduced the generalized mean curvature flow. A smooth family of almost Lagrangian immersions
\begin{equation}\label{0822033}
F:\Sigma\times [0,T)\to M
\end{equation}
is said to satisfy the generalized mean curvature flow if $F$ satisfies
\begin{equation}\label{0822034}
\frac{\partial F}{\partial t}(x,t)=\widehat H(x,t)\ \ \ \text{ and }\ \ \ F(\Sigma,0)=\Sigma_0,
\end{equation}
where $\widehat H(x,t)$ is the generalized mean curvature vector with respect to the canonical connection on the almost Lagrangian submanifold $\Sigma_t=F(\Sigma,t)$ at $F(x,t)$. Recall that a submanifold $\Sigma$ is said to be almost Lagrangian if it satisfies $J(T\Sigma)\cap T\Sigma=\{0\}$. In \cite{SW2011}, Smoczyk-Wang proved that the Lagrangian condition is preserved by this flow.
\begin{thm}[Theorem $2$ in \cite{SW2011}]\label{0822035}
Suppose that $(M,\omega,G,J)$ is an almost K\"ahler manifold and $\widetilde\nabla$ is an Einstein connection which is compatible with $G$ and $J$. If $\Sigma_0$ is a closed Lagrangian submanifold of $M$. Then the generalized mean curvature flow \eqref{0822034}
with respect to $\widetilde\nabla$ preserves the Lagrangian condition.
\end{thm}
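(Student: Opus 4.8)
The plan is to track the pullback of the symplectic form along the flow and to show that it satisfies a homogeneous linear parabolic system, so that vanishing initial data forces it to vanish for all time. I fix the source manifold $\Sigma$ and set $\sigma_t=F_t^*\omega$, a time-dependent $2$-form on $\Sigma$; the Lagrangian condition for $\Sigma_t$ is exactly $\sigma_t=0$. Since $\Sigma_0$ is Lagrangian we have $\sigma_0=0$, and the goal becomes proving $\sigma_t\equiv 0$ on the existence interval $[0,T)$.

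First I would compute the evolution of $\sigma_t$. Because $\omega$ is closed, the variational (homotopy) form of Cartan's formula applied to the map $F:\Sigma\times[0,T)\to M$ gives $\frac{\partial}{\partial t}\sigma_t=d\bigl(F_t^*\iota_{\widehat H}\omega\bigr)$, where the exterior derivative is taken on $\Sigma$ and only the values of $\widehat H$ along $\Sigma_t$ enter. Using $\omega(\cdot,\cdot)=G(J\cdot,\cdot)$, $J^2=-\operatorname{\mathop{id}}$, and the defining expression $\widehat H=\sum_k\nu_k Je_k$ in an orthonormal frame $\{e_k\}$ of the induced metric, one finds $\iota_{\widehat H}\omega=G(J\widehat H,\cdot)=-\sum_k\nu_k\,G(e_k,\cdot)$, whose pullback to $\Sigma$ is $-\nu$, the generalized mean curvature $1$-form. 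Hence the evolution collapses to the clean identity $\frac{\partial}{\partial t}\sigma_t=-d\nu_t$.

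The heart of the argument is then a pointwise identity for $d\nu_t$. Differentiating $\nu_i=\sum_k G(\widehat\nabla_{e_i}e_k,Je_k)$, commuting the time and space covariant derivatives, and using $\widehat\nabla G=\widehat\nabla J=0$, the second-order part of $d\nu_t$ assembles into a Laplacian-type operator $\Delta\sigma_t$ in the induced metric, while the zeroth-order curvature terms contract into the Ricci form: following the role of the parallel $(n,0)$-form $\Omega$ and the relation $\widehat H=J\nabla\theta$, the form $\nu_t$ differs from the differential of the phase $\theta_t$ of $\Omega|_{\Sigma_t}$ only by terms that are linear in $\sigma_t$, and the non-closed part of $\nu_t$ is governed by $\widehat\rho|_{\Sigma_t}$. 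Invoking the Einstein condition $\widehat\rho=f\omega$ replaces this curvature contribution by $f\,\sigma_t$, which is proportional to $\sigma_t$ itself and therefore vanishes on a Lagrangian. Collecting everything yields a system of the schematic form $\frac{\partial}{\partial t}\sigma_t=\Delta\sigma_t+A\ast\widehat\nabla\sigma_t+B\ast\sigma_t$, with $A,B$ smooth tensors built from the immersion and the ambient geometry, in which every term on the right vanishes when $\sigma_t=0$.

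Finally I would conclude by uniqueness for homogeneous linear parabolic systems. Since $\Sigma$ is closed, applying the maximum principle to $|\sigma_t|^2$ gives $\frac{\partial}{\partial t}|\sigma_t|^2\leqslant\Delta|\sigma_t|^2+C|\sigma_t|^2$ with $C$ depending only on the locally bounded coefficients, so Gr\"onwall's inequality together with $\sigma_0=0$ forces $\sigma_t\equiv 0$ for as long as the flow exists; equivalently $F_t^*\omega|_{\Sigma_t}=0$, which is the assertion that the Lagrangian condition is preserved. The main obstacle is the pointwise identity for $d\nu_t$ in the third step: one must carefully track the torsion of $\widehat\nabla$ when commuting the time and space derivatives, verify that the second-order terms genuinely produce the positive-definite Laplacian so that the system is parabolic, and check that the remaining curvature terms organize exactly into $\widehat\rho|_{\Sigma_t}$ so that the Einstein hypothesis applies. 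This is precisely where the full strength of $\widehat\nabla G=\widehat\nabla J=0$ and the parallelism of $\Omega$ (Proposition \ref{0822021}) is used.
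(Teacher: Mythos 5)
First, note that the paper you are reading does not prove this statement at all: it is quoted as Theorem $2$ of \cite{SW2011}, so the only meaningful comparison is with the proof in that reference. Your overall strategy --- track $\sigma_t=F_t^*\omega$, show it satisfies a homogeneous linear parabolic system, and conclude by the maximum principle on the closed manifold $\Sigma$ --- is exactly the strategy of Smoczyk--Wang, and your first reduction is correct: since $d\omega=0$, one has $\frac{\partial}{\partial t}\sigma_t=d\bigl(F_t^*\iota_{\widehat H}\omega\bigr)=-d\nu_t$, and the computation $F_t^*\iota_{\widehat H}\omega=-\nu_t$ from $\omega=G(J\cdot,\cdot)$ and $J^2=-\operatorname{id}$ is fine. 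The concluding Gr\"onwall/maximum-principle step is also standard and would go through once the evolution equation is in hand.

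The genuine gap is your third step, which is the entire analytic content of the theorem: you assert, but never derive, the identity expressing $d\nu_t$ as a Laplacian of $\sigma_t$ plus terms linear in $\sigma_t$, $\widehat\nabla\sigma_t$ and the pullback of $\widehat\rho$. Moreover, the route you sketch for it cannot work in the stated generality. You appeal to the parallel $(n,0)$-form $\Omega$ and to the relation $\widehat H=J\nabla\theta$, but these objects belong to the cotangent-bundle specialization of the present paper, where the Ricci form of the canonical connection vanishes identically (Theorem \ref{0822015}, Proposition \ref{0822021}). On a general almost K\"ahler manifold with an Einstein connection one only has $\widehat\rho=f\omega$ with $f$ not identically zero; a parallel $(n,0)$-form would force $\widehat\rho=0$, so no such $\Omega$ (and no globally defined phase $\theta_t$ of $\Omega|_{\Sigma_t}$) exists. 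Worse, the statement $\widehat H=J\nabla\theta$, i.e.\ that $\nu$ is (locally) exact on a Lagrangian, is essentially equivalent to the Dazord-type identity $d\nu=F^*\widehat\rho$ whose almost-Lagrangian refinement is precisely what you are trying to establish --- so invoking it here is circular. What is actually required, and what \cite{SW2011} does, is a direct tensor computation of $\frac{\partial}{\partial t}\,\omega(F_i,F_j)$ for an almost Lagrangian immersion, using $\widehat\nabla\omega=0$, the torsion of $\widehat\nabla$, and Gauss--Codazzi-type commutations; the curvature of $\widehat\nabla$ then enters exactly through $\widehat\rho$, and only at that point does the Einstein hypothesis convert the inhomogeneous term $F^*\widehat\rho$ into $(f\circ F)\,\sigma_t$, making the system homogeneous in $\sigma_t$. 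Without that computation your argument is a correct skeleton with its load-bearing step missing.
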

\begin{rem}\label{0904001}
From Theorem \ref{0822015} and Theorem \ref{0822035}, the generalized mean curvature flow defined by the generalized mean curvature vector $\widehat H$ with respect to $\widehat\nabla$ in the cotangent bundle $M$ of a Riemannian manifold $X$ preserves the Lagrangian condition if the initial one is a closed Lagrangian submanifold of $M$.
\end{rem}

\subsection{The generalized mean curvature flow in the graph case}
As a ``test case" for the more general non-graph situation, it is interesting to consider the generalized Lagrangian mean curvature flow of Lagrangian graphs that are induced by 1-forms on $X$ in the cotangent bundle $M$ of a Riemannian manifold $(X,g)$. Let $\chi$ be a smooth $1$-form on $X$. Define
\begin{equation}\label{0822036}
F_{\chi}:X\to M=T^*X,\ \ \ \text{   }\ \ \ F_{\chi}(x)=(x,\chi(x)).
\end{equation}
Then the graph $\Sigma_\chi=(x,\chi(x))\subseteq M$ of $\chi$ is Lagrangian if and only if $\chi$ is closed. In terms of the basis $\left\{X_i,\frac{\partial}{\partial p_i}\right\}$ of $TM$, the tangent space of $\Sigma_\chi$ is spanned by the basis
\begin{equation}\label{0822037}
(F_{\chi})_{i}:= \frac{\partial F_\chi}{\partial q^i}=X_i +\chi_{j,i}\frac{\partial}{\partial p_j},
\end{equation}
where $\chi_{j,i}$ denotes the covariant derivative of the $1$-form $\chi$ with respect to the metric $g$ on $X$. 

\begin{rem}\label{0904002}If a Lagrangian submanifold is given as the graph of a closed $1$-form $\chi$ in the cotangent bundle, then its exactness is equivalent to the exactness of $\chi$. In fact, on $X$, 
\begin{equation}\label{08220371}
F_{\chi}^*\lambda(\frac{\partial}{\partial q^i})=\lambda((F_\chi)_*\frac{\partial}{\partial q^i})=\lambda(\frac{\partial F_\chi}{\partial q^i})=\lambda(X_i +\chi_{j,i}\frac{\partial}{\partial p_j})=p_i=\chi(\frac{\partial}{\partial q^i}),
\end{equation}
where $\lambda=\sum\limits_{i=1}^np_idq^i$ is the Liouville form. If the Lagrangian submanifold is exact, that is, $F_{\chi}^*\lambda$ is exact. Then there is a smooth function $f$ on $X$ such that
\begin{equation}\label{08220371}
F_{\chi}^*\lambda(\frac{\partial}{\partial q^i})=df(\frac{\partial}{\partial q^i}),
\end{equation}
which implies that $\chi=df$ on $X$.
\end{rem}

According to Proposition \ref{0822027}, the Lagrangian angle of $\Sigma_\chi$ can be expressed as follows.

\begin{pro}[Proposition 5.1 in \cite{STW}]\label{0822039}
Suppose that $\Sigma_\chi$ is a Lagrangian submanifold of $M=T^*X$ defined as the graph of a closed $1$-form $\chi$ on $X$. Then the Lagrangian angle $\theta(\chi)$ of $\Sigma_\chi$ with respect to the horizontal distribution $\mathcal{H}$ of $TM$ is
\begin{equation}\label{0822038}
e^{\sqrt{-1}\theta(\chi)}=\frac{\det (g_{i j}+\sqrt{-1}\chi_{j,i})}{\sqrt{\det g_{ij}}\sqrt{\det \eta_{ij}}},
\end{equation}
where $g_{ij}$ is the metric on $X$, $\chi_{j,i}$ is the covariant derivative of $\chi$ with respect to $g$ and $\eta_{ij}=G\left((F_\chi)_i,(F_\chi)_j\right)=g_{ij}+\chi_{k,i}g^{kl}\chi_{l,j}$ is the induced metric on $\Sigma_\chi$.
\end{pro}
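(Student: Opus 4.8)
The plan is to apply the general Lagrangian-angle formula of Proposition \ref{0822027} to the explicit tangent frame $(F_\chi)_i = X_i + \chi_{j,i}\frac{\partial}{\partial p_j}$ from \eqref{0822037} and then simplify. Since $\chi$ is closed, $\Sigma_\chi$ is Lagrangian, so the hypotheses of Proposition \ref{0822027} are satisfied with $F_i = (F_\chi)_i$, and the problem reduces to evaluating the three ingredients $G\big((F_\chi)_i, X^j\big)$, $G\big((F_\chi)_i, \frac{\partial}{\partial p_j}\big)$ and $\eta_{ij} = G\big((F_\chi)_i, (F_\chi)_j\big)$ that appear in \eqref{0822029}.

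First I would compute these inner products using the metric values \eqref{0822008} of Proposition \ref{0822007}. Those values are recorded in the frame $\{X^i, \frac{\partial}{\partial p_i}\}$, so the one bookkeeping point is to lower the index via $X_i = g_{il}X^l$ before applying them. From $G(X^l, X^j) = g^{lj}$, $G(X^l, \frac{\partial}{\partial p_j}) = 0$ and $G(\frac{\partial}{\partial p_k}, \frac{\partial}{\partial p_j}) = g^{kj}$ one then reads off
\[
G\big((F_\chi)_i, X^j\big) = \delta_i^j, \qquad G\Big((F_\chi)_i, \tfrac{\partial}{\partial p_j}\Big) = g^{kj}\chi_{k,i}, \qquad \eta_{ij} = g_{ij} + \chi_{k,i}g^{kl}\chi_{l,j},
\]
the last identity being precisely the asserted form of the induced metric.

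Consequently the matrix inside the first determinant of \eqref{0822029} is $\delta_i^j + \sqrt{-1}\,g^{kj}\chi_{k,i}$, which I would factor as $\big(g_{ik}+\sqrt{-1}\chi_{k,i}\big)g^{kj}$. Its determinant therefore equals $\det\big(g_{ij}+\sqrt{-1}\chi_{j,i}\big)/\det g_{ij}$. Substituting this and $\det\eta_{ij}$ into \eqref{0822029} gives
\[
\sqrt{-1}\,\theta(\chi) = \log\frac{\det\big(g_{ij}+\sqrt{-1}\chi_{j,i}\big)}{\det g_{ij}} + \frac{1}{2}\log\det g_{ij} - \frac{1}{2}\log\det\eta_{ij},
\]
and exponentiating collapses the two $\det g_{ij}$ contributions into a single $\sqrt{\det g_{ij}}$ in the denominator, which yields \eqref{0822038}.

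Finally, the main point to flag is that there is no deep difficulty here: the entire content is the linear-algebra simplification of the determinant together with correct index bookkeeping. The only genuine subtlety is to track the placement of the derivative index, so that the factor $\chi_{k,i}$ coming from $(F_\chi)_i = X_i + \chi_{j,i}\frac{\partial}{\partial p_j}$ produces the matrix $\big(g_{ij}+\sqrt{-1}\chi_{j,i}\big)$ and not its transpose $\big(g_{ij}+\sqrt{-1}\chi_{i,j}\big)$. Since $g$ is symmetric and $\det(M)=\det(M^{\top})$, a transpose error would leave the modulus $|e^{\sqrt{-1}\theta}|$ unchanged but could corrupt the argument, i.e. the Lagrangian angle itself; I would therefore verify the index placement explicitly, for instance on a diagonal model where $\chi_{j,i}$ is diagonal, to confirm that the clean factorization $\big(g_{ik}+\sqrt{-1}\chi_{k,i}\big)g^{kj}$ is the correct one.
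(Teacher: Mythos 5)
Your proposal is correct and follows essentially the same route as the paper, which obtains this proposition (quoted from \cite{STW}) by applying Proposition \ref{0822027} to the graph frame $(F_\chi)_i=X_i+\chi_{j,i}\frac{\partial}{\partial p_j}$, lowering the index via $X_i=g_{il}X^l$, and factoring the determinant exactly as you do. One small correction to your closing caveat: a transpose slip could not ``corrupt the argument,'' since $\det M=\det M^{\top}$ holds for complex matrices (transpose, not conjugate transpose), and in any case closedness of $\chi$ gives $\chi_{j,i}=\chi_{i,j}$, so the matrix $g_{ij}+\sqrt{-1}\chi_{j,i}$ is symmetric to begin with.
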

By using formula \eqref{0822038}, the derivative of $\theta(\chi)$ is given by
\begin{lem}[Lemma $6.1$ in \cite{STW}]\label{0822040}
The derivative of $\theta(\chi)$ is given by
\begin{equation}\label{0822041}
\left(\theta(\chi)\right)_{k}=\eta^{pq}\chi_{p,qk}.
\end{equation}
\end{lem}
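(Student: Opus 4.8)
The plan is to take the complex logarithm of the closed formula \eqref{0822038} for $e^{\sqrt{-1}\theta(\chi)}$ and to differentiate it covariantly. Writing
$$\sqrt{-1}\,\theta(\chi)=\log\det\!\big(g_{ij}+\sqrt{-1}\chi_{j,i}\big)-\tfrac12\log\det g_{ij}-\tfrac12\log\det\eta_{ij},$$
I would apply the covariant derivative $D_k$ to both sides. The decisive simplifications are that $D$ is the Levi-Civita connection of $g$, so $Dg=0$ and the middle term contributes nothing, and that $\chi$ is closed, so the Hessian $\chi_{j,i}$ is symmetric and differentiating $\chi_{j,i}=\chi_{i,j}$ shows $\chi_{j,ik}$ is symmetric in $i,j$. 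These symmetries are exactly what let the two surviving trace terms collapse into a single real expression.

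To organize the computation I set $A_{ij}:=g_{ij}+\sqrt{-1}\chi_{j,i}$, a symmetric complex matrix, and denote its matrix inverse by $\tilde A^{ij}$. The cleanest route I see exploits the factorization
$$\eta_{ij}=g_{ij}+\chi_{k,i}g^{kl}\chi_{l,j}=A_{ik}\,g^{kl}\,\bar A_{lj},$$
where $\bar A_{lj}=g_{lj}-\sqrt{-1}\chi_{j,l}$ is the complex conjugate; a short index check (using $\chi_{j,i}=\chi_{i,j}$) confirms this identity. It gives at once $\det\eta=|\det A|^2/\det g$, hence $\tfrac12\log\det\eta=\mathrm{Re}(\log\det A)-\tfrac12\log\det g$, and therefore $\theta(\chi)=\mathrm{Im}(\log\det A)$. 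Differentiating and using $D_kA_{ij}=\sqrt{-1}\,\chi_{j,ik}$ then yields
$$\big(\theta(\chi)\big)_k=\mathrm{Im}\big(\sqrt{-1}\,\tilde A^{ji}\chi_{j,ik}\big)=\mathrm{Re}\big(\tilde A^{ji}\big)\,\chi_{j,ik},$$
since $\chi_{j,ik}$ is real.

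It then remains to identify the real part of $A^{-1}$ with $\eta^{-1}$, which is the one genuinely algebraic point and where I expect the only real work. The efficient way to see it is to pass to a $g$-orthonormal frame that simultaneously diagonalizes the symmetric endomorphism $g^{kl}\chi_{l,j}$, with eigenvalues $\lambda_1,\dots,\lambda_n$; in this frame $A=\mathrm{diag}(1+\sqrt{-1}\lambda_i)$ and $\eta=\mathrm{diag}(1+\lambda_i^2)$, so $\mathrm{Re}\,\tilde A^{ii}=\tfrac{1}{1+\lambda_i^2}=\eta^{ii}$, giving $\mathrm{Re}(\tilde A^{ij})=\eta^{ij}$. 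Because the claimed formula is a pointwise tensorial identity, verifying it in such a frame at an arbitrary point suffices. Substituting back produces $\big(\theta(\chi)\big)_k=\eta^{ji}\chi_{j,ik}=\eta^{pq}\chi_{p,qk}$, which is \eqref{0822041}. The main obstacle is thus the matrix identity $\mathrm{Re}(A^{-1})=\eta^{-1}$; everything else is bookkeeping once the factorization $\eta=Ag^{-1}\bar A$ and the vanishing $Dg=0$ are in hand.
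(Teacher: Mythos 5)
Your proof is correct. The paper itself does not prove this lemma but imports it from Lemma $6.1$ of \cite{STW}, and your argument---taking the logarithm of the angle formula \eqref{0822038}, differentiating covariantly with $D_kA_{ij}=\sqrt{-1}\chi_{j,ik}$, and identifying $\mathrm{Re}(A^{-1})=\eta^{-1}$ via the factorization $\eta_{ij}=A_{ik}g^{kl}\bar A_{lj}$ and diagonalization of $\chi_{j,i}$ in a $g$-orthonormal frame---is essentially the same computation as the one carried out in that reference.
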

In the graph case, we call the graph $\Sigma_\chi$ of $\chi$ in the cotangent bundle $M$ a special Lagrangian submanifold if and only if $\theta$ is a constant due to Definition \ref{0822032}. According to Lemma \ref{0822040}, if $\Sigma_\chi$ is a special Lagrangian submanifold, then
\begin{equation}\label{0822042}
\eta^{pq}\chi_{p,qk}=0.
\end{equation}
Differentiating this equation one more time, we get
\begin{equation}
\label{0822043}
\eta^{pq}_{\repl{pq},l}\chi_{p,qk}+ \eta^{pq}\chi_{p,qkl}=0.
\end{equation}
From Proposition $5.2$ in \cite{STW}, the generalized Lagrangian mean curvature flow of graph case in the cotangent bundle can be expressed locally as a fully nonlinear parabolic equation for the locally defined potential function $u_t$ (with $\chi_t=du_t$) on $X$,
\begin{equation}\label{0822044}
\frac{\partial u_t}{\partial t} =\theta(\chi_t)=\frac{1}{\sqrt{-1}}\log \frac{\det (g_{i j}+\sqrt{-1}(u_t)_{ij})}{\sqrt{\det g_{ij}}\sqrt{\det (\eta_{t})_{ij}}},
\end{equation}
where $(\eta_t)_{ij}=g_{ij}+(u_t)_{ki}g^{kl}(u_t)_{lj}$. 
\begin{rem}\label{0823001}

Usually, the flow \eqref{0822044} is only defined locally. In \cite{STW}, Smoczyk-Tsui-Wang proved that if the initial Lagrangian submanifold is induced as the graph of an exact $1$-form, then this flow is defined globally. More precisely, let $u_0$ be a smooth function on $X$. If the generalized mean curvature flow is given as the graph of a closed $1$-form $\chi_t$ and the initial Lagrangian submanifold $\Sigma_0$ is indued as the graph of $du_0$, since the generalized Lagrangian mean curvature flow of compact Lagrangians in $T^*X$ keeps the exactness if $\Sigma_0$ is exact (Theorem $2$ in \cite{STW}), by Remark \ref{0904002}, there exists a smooth function $u_t$ such that $\chi_t=du_t$ and that $u_t$ is a global solution to flow \eqref{0822044}. But for more general initial Lagrangian submanifold which is indued only as the graph of a closed $1$-form $\chi_0$, the above arguments may not be valid.
\end{rem}

In this paper, we first prove that above flow is also globally defined when the oscillation of the Lagrangian angle of the initial Lagrangian submanifold is less than $2\pi$.
 
\begin{thm}\label{0822045}
Suppose that $\Sigma_{t\in [0,T)}$ is a generalized mean curvature flow given as the graph of a closed $1$-form $\chi_t$ and the Lagrangian angle $\theta(\chi_0)$ of $\Sigma_0$ satisfies
\begin{equation}\label{0822046}
\operatorname{\mathop{osc}}(\theta(\chi_0))<2\pi.
\end{equation}
Then the $1$-form $\chi_t$ is in the cohomology class $[\chi_0]\in H^1(X,\mathbb{R})$, that is, there exists smooth function $u_t$ on $X$ such that $\chi_t=\chi_0+du_t$. Furthermore, $u_t$ satisfies
\begin{equation}\label{0822047}
\frac{\partial u_t}{\partial t} =\theta(\chi_{t})=\frac{1}{\sqrt{-1}}\log \frac{\det (g_{i j}+\sqrt{-1}(\chi_{t})_{j,i})}{\sqrt{\det g_{ij}}\sqrt{\det (\eta_{t})_{ij}}},
\end{equation}
where $g_{ij}$ is the metric on $X$, $(\chi_{t})_{j,i}$ is the covariant derivative of $\chi_{t}$ with respect to $g$ and $(\eta_t)_{ij}=g_{ij}+(\chi_{t})_{k,i}g^{kl}(\chi_{t})_{l,j}$ is the induced metric on $\Sigma_{\chi_{t}}$.
\end{thm}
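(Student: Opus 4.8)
The plan is to split the statement into two parts: first, the cohomological claim that $[\chi_t]=[\chi_0]$ for all $t$, which then produces the potential $u_t$ with $\chi_t=\chi_0+du_t$; and second, the verification that this $u_t$ satisfies the global equation \eqref{0822047}. The second part will follow almost immediately from the first together with Proposition \ref{0822039} and the locally-defined flow equation \eqref{0822044}, so the real work is establishing the cohomology class is preserved.

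For the cohomological invariance, I would argue by continuity of the map $t\mapsto[\chi_t]\in H^1(X,\mathbb{R})$. Since $\Sigma_{t}$ is a smooth family of Lagrangian graphs, the $1$-forms $\chi_t$ depend smoothly on $t$, and hence $[\chi_t]$ traces a continuous path in $H^1(X,\mathbb{R})$. The key point is to show this path is in fact \emph{constant}. Here is where the generalized mean curvature flow structure enters: by \eqref{0822031} the flow moves $\Sigma_t$ in the direction $\widehat H=J\nabla\theta$, and I expect that the rate of change of the cohomology class is governed by the $1$-form dual to $\widehat H$. Concretely, using the flow equation $\partial_t F=\widehat H$ together with the Cartan/Lie-derivative formula for how the pullback of the Liouville form $\lambda$ evolves, one computes $\frac{d}{dt}[\chi_t]=[\text{something exact}]=0$. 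The almost-Kähler identity $\omega=G(J\cdot,\cdot)$ and the expression \eqref{0822011} for $J$ should let me identify $\partial_t\chi_t$, modulo exact forms, with $d$(a function), so that $[\chi_t]$ is locally constant in $t$ and therefore equal to $[\chi_0]$ on the connected interval $[0,T)$.

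Once $[\chi_t]=[\chi_0]$ is known, the hypothesis $\operatorname{\mathop{osc}}(\theta(\chi_0))<2\pi$ is what upgrades this to a \emph{globally} defined smooth potential. By Remark \ref{0822026} the Lagrangian angle $\theta$ is only defined up to $2\pi\mathbb{Z}$; the smallness of its oscillation guarantees that $\theta(\chi_t)$ lifts to a genuine single-valued smooth function on $X$ rather than a multivalued one or a section of a nontrivial circle bundle. I would show that the oscillation bound is preserved along the flow—using the evolution \eqref{0825001}-type maximum principle for $\theta$, since by \eqref{0822031} the Lagrangian angle satisfies a parabolic equation whose oscillation is non-increasing—so that $\theta(\chi_t)$ remains a well-defined smooth function for all $t\in[0,T)$. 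Writing $\chi_t-\chi_0$ as an exact form $du_t$ (possible because the classes agree and $X$ is connected), and then integrating $\partial_t u_t=\theta(\chi_t)$ in $t$, pins down $u_t$ uniquely up to a constant, which can be normalized.

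The main obstacle is the middle step: proving rigorously that $\frac{d}{dt}[\chi_t]=0$. This requires carefully relating $\partial_t\chi_t$ to the pullback of the flow velocity $\widehat H$ under $F_{\chi_t}$, which in turn demands the explicit description of the horizontal/vertical splitting and of $J$ from Proposition \ref{0822007}, and the interpretation of $\widehat H=J\nabla\theta$ as a closed (indeed exact, modulo the cohomology obstruction) $1$-form on $X$. The delicate point is separating the genuinely exact contribution from any potential class-changing term; I expect the vanishing of the Ricci form (Theorem \ref{0822015}) and the parallelism of $\Omega$ (Proposition \ref{0822021}) to be exactly the inputs that force the class-changing term to vanish, so that only an exact piece survives.
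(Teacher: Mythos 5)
Your proposal has a genuine gap, and it sits exactly where you flagged ``the delicate point'': the claim that the class-changing term in $\frac{d}{dt}[\chi_t]$ vanishes. Whether by your Lie-derivative computation or by the paper's route through the local potential equation \eqref{0822044}, what one obtains is
\begin{equation*}
\frac{\partial}{\partial t}\chi_t=d\theta_t \quad\text{(modulo an exact term coming from }\iota_{\widehat H}\lambda\text{)},
\end{equation*}
so that $\frac{d}{dt}[\chi_t]=[d\theta_t]$. Here $d\theta_t$ is only a \emph{closed} $1$-form: since $\theta_t$ is a priori defined only modulo $2\pi$ (Remark \ref{0822026}), the class $[d\theta_t]$ is (up to the factor $2\pi$) the Maslov class, and it vanishes if and only if $\theta_t$ admits a single-valued lift. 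Writing ``$[\text{something exact}]=0$'' therefore begs the question, and the inputs you expect to force the vanishing --- Theorem \ref{0822015} and Proposition \ref{0822021} --- cannot do so: they are precisely what make $e^{\sqrt{-1}\theta}$ well defined and $\widehat H=J\nabla\theta$ hold, and are already used upstream; they say nothing about single-valuedness of $\theta_t$. In other words, the cohomological invariance is not an independent first step from which the potential is later extracted; it is \emph{equivalent} to the single-valuedness of the angle, which is exactly what the hypothesis $\operatorname{\mathop{osc}}(\theta(\chi_0))<2\pi$ is there to provide. Without that input, your Part 1 cannot be closed: the class would drift, $[\chi_t]=[\chi_0]+\int_0^t[d\theta_s]\,ds$.

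The fix is to reverse the order of your two parts, after which your argument collapses onto the paper's. The paper first derives from \eqref{0822041} and \eqref{0822044} the parabolic equation $\partial_t\theta(\chi_t)=\eta_t^{ij}(\theta(\chi_t))_{ij}$, applies the maximum principle to keep $\operatorname{\mathop{osc}}(\theta(\chi_t))<2\pi$, and concludes that $\theta_t$ is a single-valued smooth function; only then does $\partial_t\chi_t=d\theta_t$ show both that $[\chi_t]=[\chi_0]$ and, integrating in $t$, that there is a global $u_t$ with $\partial_t u_t=\theta_t$, the formula \eqref{0822047} following from Proposition $5.2$ of \cite{STW} and Proposition \ref{0822039}. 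Your Part 2 already contains this maximum-principle step, so you do have the key ingredient --- it is just deployed in the wrong logical position, as a cosmetic ``upgrade'' to a global potential rather than as the crux on which the cohomological statement itself depends. Your Cartan-formula route to $\partial_t\chi_t=d\theta_t$ is a legitimate geometric alternative to quoting Proposition $5.2$ of \cite{STW}, but it does not change where the real difficulty lies.
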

\begin{proof}
By using \eqref{0822041} and \eqref{0822044}, we have 
\begin{equation}\label{0904003}
\frac{\partial}{\partial t}\theta(\chi_t)=\eta_t^{ij}\left(\theta(\chi_t)\right)_{ij},
\end{equation}
where $\left(\theta(\chi_t)\right)_{ij}$ is the covariant derivative of $\theta(\chi_t)$ with respect to $g$, $(\eta_t)_{ij}=g_{ij}+(\chi_{t})_{k,i}g^{kl}(\chi_{t})_{l,j}$ is the induced metric on $\Sigma_{\chi_{t}}$ and $\eta_t^{ij}$ is the inverse of  $(\eta_t)_{ij}$. The maximum principle implies that 
\begin{equation}\label{0822048}
\operatorname{ \mathop{osc}}(\theta(\chi_t))<2\pi
\end{equation}
along the generalized mean curvature flow. Hence, $\theta_t=\theta(\Sigma_t)$ is a single value smooth function. By using equation \eqref{0822044} again, we obtain that
\begin{equation}\label{0822049}
\frac{\partial}{\partial t}\chi_t=d\theta_t.
\end{equation}
Hence, $\chi_t$ is in the cohomology class $[\chi_0]$ for all $t\in [0,T)$, and \eqref{0822047} follows from Proposition $5.2$ of \cite{STW} and Proposition \ref{0822039}.
\end{proof}

\subsection{The uniqueness of special Lagrangian submanifolds of graph case in the cotangent bundle}\label{subsec-unique}

In this subsection, we prove the uniqueness of special Lagrangian submanifolds in the cotangent bundle if they are induced as the graphs of $1$-forms.
\begin{thm}\label{0815100}
Let $(X,g)$ be a compact Riemannian manifold. If $\chi_0$ and $\chi_1$ are two closed $1$-forms in $[\chi]\in H^1(M,\mathbb{R})$ and both of them induce special Lagrangian submanifolds as their graphs, then $\chi_0=\chi_1$.
\end{thm}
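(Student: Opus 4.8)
The plan is to reduce the statement to a linear elliptic problem for the potential of $\chi_1-\chi_0$ and then invoke the maximum principle. Since $\chi_0,\chi_1\in[\chi]\in H^1(X,\mathbb{R})$, their difference is exact, so I would fix a function $u\in C^\infty(X)$ with $\chi_1=\chi_0+du$; the goal then becomes $du\equiv 0$. Write $\theta_0:=\theta(\chi_0)$ and $\theta_1:=\theta(\chi_1)$ for the two Lagrangian angles, which are constants by hypothesis, and interpolate by the closed $1$-forms $\chi_s:=\chi_0+s\,du$, $s\in[0,1]$, whose induced metrics $(\eta_s)_{ij}=g_{ij}+(\chi_s)_{k,i}g^{kl}(\chi_s)_{l,j}$ are positive definite for every $s$ (they equal $g$ plus a positive semidefinite term).

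The heart of the argument is the linearization of $\theta$ along this path. Setting $P=\big(g_{ij}+\sqrt{-1}\chi_{j,i}\big)$ and using that $\chi$ is closed (so $\chi_{j,i}$ is symmetric), a direct computation gives $\overline P^{\,T}g^{-1}P=\eta$, whence $\sqrt{\det g}\,\sqrt{\det \eta}=|\det P|$ and, by \eqref{0822038}, $\theta(\chi)=\operatorname{Im}\log\det P$. Differentiating along $\chi_s$, with $\tfrac{d}{ds}(\chi_s)_{j,i}=u_{ij}$, yields
\[
\frac{d}{ds}\theta(\chi_s)=\operatorname{Im}\operatorname{tr}\!\big(P_s^{-1}\,\sqrt{-1}\,u\big)=\operatorname{Re}\operatorname{tr}\!\big(P_s^{-1}u\big)=\eta_s^{ij}u_{ij},
\]
where the last equality uses $P_s^{-1}=\eta_s^{-1}\overline P_s^{\,T}g^{-1}$ together with the reality of $g,\eta_s,u$ (only the real part $g$ of $\overline P_s^{\,T}$ survives). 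Crucially, the coefficient $\eta_s^{ij}$ is positive definite for all $s$, so this is an unconditionally elliptic operator with no zeroth-order term.

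Integrating over $s\in[0,1]$ and using that $\theta_0,\theta_1$ are constants, I obtain
\[
A^{ij}u_{ij}=\theta_1-\theta_0 \quad\text{on } X,\qquad A^{ij}:=\int_0^1\eta_s^{ij}\,ds,
\]
where $A^{ij}$ is positive definite at each point. Evaluating at a maximum point of $u$, where the covariant Hessian satisfies $u_{ij}\le 0$, forces $\theta_1-\theta_0\le 0$; evaluating at a minimum point, where $u_{ij}\ge 0$, forces $\theta_1-\theta_0\ge 0$. Hence $\theta_0=\theta_1$ and $A^{ij}u_{ij}\equiv 0$. This is a homogeneous second-order linear elliptic equation with no zeroth-order term on the closed manifold $X$, so the strong maximum principle shows that $u$ is constant on each connected component of $X$; therefore $du\equiv 0$ and $\chi_1=\chi_0$.

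The main obstacle is the linearization step: establishing the identity $\overline P^{\,T}g^{-1}P=\eta$, which is exactly what makes the leading coefficient the inverse induced metric $\eta_s^{ij}$, and—equally important—observing that $\eta_s$ remains positive definite along the entire interpolation, so that the averaged operator $A^{ij}\partial_i\partial_j$ is genuinely elliptic with no zeroth-order term. Once this is in place, the passage from ``both angles constant'' to $\chi_0=\chi_1$ is the standard two-solutions maximum-principle trick, with the compactness of $X$ supplying the interior extrema needed at both ends.
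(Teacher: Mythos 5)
Your proposal is correct and follows essentially the same route as the paper: interpolate $\chi_s=\chi_0+s\,du$, use the linearization $\frac{d}{ds}\theta(\chi_s)=\eta_s^{ij}u_{ij}$, integrate in $s$ to get $\theta_1-\theta_0=A^{ij}u_{ij}$ with $A^{ij}=\int_0^1\eta_s^{ij}\,ds>0$, compare at a maximum and a minimum of $u$ to force $\theta_0=\theta_1$, and then conclude $u$ is constant by the strong maximum principle. The only difference is cosmetic: you re-derive the linearization formula from the identity $\overline{P}^{\,T}g^{-1}P=\eta$ (and you spell out the final strong maximum principle step), whereas the paper obtains the same formula by invoking Lemma \ref{0822040}.
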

\begin{proof}
Since $\chi_0$ and $\chi_1$ are in the same cohomology class $[\chi]$, there exists a smooth function $u$ such that
\begin{equation}\label{0822050}
\chi_1=\chi_0+du.
\end{equation}
Let $\chi_t=\chi_0 +tdu$ and $\theta(\chi_t)$ be the Lagrangian angle of $\Sigma_{\chi_t}$ that are induced as the graph of $\chi_t$. According to Lemma \ref{0822040}, we have
\begin{equation}\label{0822051}
\frac{d}{dt}\theta(\chi_t)=\eta^{pq}_t \frac{d}{dt}( \chi_t)_{p,q}=\eta^{pq}_t u_{pq},
\end{equation}
where $(\eta_t)_{pq}$ is the metric induced on $\Sigma_{\chi_t}$ and $\eta_t^{pq}$ is the inverse of $(\eta_t)_{pq}$. Integrating \eqref{0822051} from $0$ to $1$ on both sides, we get
\begin{equation}\label{0822052}
\theta(\chi_1)-\theta(\chi_0)=\int_0^1 \eta^{pq}_t dt\ u_{pq}=\tilde \eta^{pq}u_{pq},
\end{equation}
where $\tilde \eta^{pq}=\int_0^1 \eta^{pq}_t d t$ is a positive definite matrix. We assume $u(x_0)=\min\limits_{X} u(x)$. Then at point $x_0$, by \eqref{0822052} and the maximum principle, we have $\theta(\chi_1)\geqslant \theta(\chi_0)$ at $x_0$. Since $\theta(\chi_1)$ and $\theta(\chi_0)$ are constants,  $\theta(\chi_1)\geqslant \theta(\chi_0)$ on $X$. Similar arguments imply that $\theta(\chi_1)\leqslant \theta(\chi_0)$ also holds on $X$. Hence $\theta(\chi_1)= \theta(\chi_0)$ and then $u$ is a constant, that is, $ \chi_0=\chi_1$.
\end{proof}

\section{Evolution equations along the generalized Lagrangian mean curvature flows}
\label{sec-3}
In this section, we give the evolution equations of some geometric quantities along the generalized Lagrangian mean curvature flow \eqref{GLMCF1}. For convenience, we will write $u$ and $\eta$ instead of $u_t$ and $\eta_t$. We denote $\Delta_\eta=\eta^{ij}D_iD_j$, where $D$ is the covariant derivative with respect to $g$.

\begin{lem}\label{0822053}
The evolution equation of $(u-u_0(p)-\theta(\hat \chi)t)^2$ along the flow \eqref{GLMCF1} is given by
\begin{equation}\label{0822054}
(\frac{\partial }{\partial t} -\Delta_\eta ) (u-u_0(p)-\theta(\hat \chi)t)^2 =2(u-u_0(p)-\theta(\hat \chi) t)(\theta(\hat \chi_u)-\theta(\hat \chi) -\Delta_\eta u) -2 \eta^{ij}u_{i}u_{j},
\end{equation}
where $p\in X$ is a fixed point.
\end{lem}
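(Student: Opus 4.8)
The plan is to set $w := u - u_0(p) - \theta(\hat\chi)t$ and simply expand $(\partial_t - \Delta_\eta)w^2$ by the product rule, exploiting the fact that the two subtracted terms are constants. Since $p\in X$ is fixed, $u_0(p)$ is a real constant, and because $\Sigma_{\hat\chi}$ is a special Lagrangian submanifold, its Lagrangian angle $\theta(\hat\chi)$ is a constant by Definition \ref{0822032}; hence $u_0(p)+\theta(\hat\chi)t$ is independent of the space variable and depends on $t$ only through the linear term $\theta(\hat\chi)t$.

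First I would record the first-order behaviour of $w$. In the space directions the subtracted terms are annihilated by $D$, so $D_i w = D_i u = u_i$ and $D_iD_j w = D_iD_j u$; in particular $\Delta_\eta w = \Delta_\eta u$. In the time direction, differentiating the linear term gives $\partial_t w = \partial_t u - \theta(\hat\chi)$, and the flow equation \eqref{GLMCF1} identifies $\partial_t u = \theta(\hat\chi_u)$, so that $\partial_t w = \theta(\hat\chi_u) - \theta(\hat\chi)$.

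Then I would apply the product rule to $w^2$. On the one hand $\partial_t(w^2) = 2w\,\partial_t w = 2w\left(\theta(\hat\chi_u) - \theta(\hat\chi)\right)$. On the other hand, writing $\Delta_\eta = \eta^{ij}D_iD_j$ and using $D_iD_j(w^2) = 2\,D_iw\,D_jw + 2w\,D_iD_jw$, we obtain $\Delta_\eta(w^2) = 2\eta^{ij}u_iu_j + 2w\,\Delta_\eta u$. Subtracting these two identities yields
\begin{equation*}
(\partial_t - \Delta_\eta)(w^2) = 2w\left(\theta(\hat\chi_u) - \theta(\hat\chi) - \Delta_\eta u\right) - 2\eta^{ij}u_iu_j,
\end{equation*}
which is exactly \eqref{0822054} after substituting back $w = u - u_0(p) - \theta(\hat\chi)t$.

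Since every step is a direct application of the chain and product rules, there is no genuine analytic obstacle; the only point requiring care is the bookkeeping that distinguishes the constant $\theta(\hat\chi)$ (which survives the $t$-derivative as $-\theta(\hat\chi)$ but is killed by every spatial $D_i$) from the evolving angle $\theta(\hat\chi_u)$ supplied by the flow. The reason this elementary identity is worth isolating, and presumably its role later in the paper, is that the manifestly nonpositive term $-2\eta^{ij}u_iu_j$ is then available to absorb the curvature contributions appearing in the evolution of $|D^2u|_g^2$, as foreshadowed in the introduction.
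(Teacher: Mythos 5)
Your proposal is correct and follows essentially the same route as the paper: both compute $\partial_t(w^2)=2w\bigl(\theta(\hat\chi_u)-\theta(\hat\chi)\bigr)$ via the flow equation \eqref{GLMCF1} and $\Delta_\eta(w^2)=2w\,\Delta_\eta u+2\eta^{ij}u_iu_j$ via the product rule, then subtract. The paper simply records these two identities without the explanatory bookkeeping, so your write-up is the same argument in slightly expanded form.
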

\begin{proof}
Straightforward calculations show that
\begin{equation}\label{0822055}
\frac{\partial }{\partial t}(u-u_0(p)-\theta(\hat \chi)t)^2=2(u-u_0(p)-\theta(\hat \chi)t)(\theta(\hat \chi_u)-\theta(\hat \chi))
\end{equation}
and
\begin{equation}\label{0822056}
\Delta_{\eta}(u-u_0(p)-\theta(\hat \chi)t)^2=2(u-u_0(p)-\theta(\hat \chi)t)\Delta_{\eta} u +2\eta^{ij} u_iu_j.
\end{equation}
Then we get the desired equation.
\end{proof}

\begin{lem}\label{0823002}
The function $\vartheta=g^{ij}u_i u_j$ satisfies
\begin{equation}\label{0823003}
(\frac{\partial}{\partial t} -\Delta_\eta)\vartheta =-2\eta^{pq}g^{ij} u_{ip}u_{jq}-2 g^{ij}\eta^{pq} R_{p\repl{l}iq}^{\repl{p}l} u_{l}u_{j} +2g^{ij}\eta^{pq} \hat \chi_{p,qi}u_{j}.
\end{equation}
\end{lem}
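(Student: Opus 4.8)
The plan is to compute $\partial_t\vartheta$ and $\Delta_\eta\vartheta$ separately and then reconcile them; the only genuine input beyond bookkeeping with symmetric tensors is a curvature identity hidden in the third covariant derivatives of $u$.

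First I would differentiate in time. Since $g$ is independent of $t$, $\partial_t$ commutes with the covariant derivative $D$, and the flow \eqref{GLMCF1} gives $\partial_t u=\theta(\hat\chi_t)$, so $\partial_t\vartheta=2g^{ij}u_i\,(\theta(\hat\chi_t))_j$. Applying Lemma \ref{0822040} with $\chi=\hat\chi_t$, together with the splitting $(\hat\chi_t)_{p,q}=\hat\chi_{p,q}+u_{pq}$ coming from $\hat\chi_t=\hat\chi+du$, gives $(\theta(\hat\chi_t))_j=\eta^{pq}(\hat\chi_{p,qj}+u_{pqj})$, so that $\partial_t\vartheta=2g^{ij}\eta^{pq}u_i\hat\chi_{p,qj}+2g^{ij}\eta^{pq}u_iu_{pqj}$. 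Relabelling $i\leftrightarrow j$ against the symmetric $g^{ij}$ turns the first summand into the claimed term $2g^{ij}\eta^{pq}\hat\chi_{p,qi}u_j$.

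Next I would expand $\Delta_\eta\vartheta=\eta^{pq}D_pD_q(g^{ij}u_iu_j)$. Differentiating twice gives four terms, and collecting them using the symmetries of $g^{ij}$ and $\eta^{pq}$ yields $\Delta_\eta\vartheta=2\eta^{pq}g^{ij}u_{iqp}u_j+2\eta^{pq}g^{ij}u_{ip}u_{jq}$. The second term, carried over with a minus sign in $\partial_t-\Delta_\eta$, is precisely the first term on the right-hand side of the lemma, so the statement reduces to identifying the leftover $2g^{ij}\eta^{pq}u_iu_{pqj}-2\eta^{pq}g^{ij}u_{iqp}u_j$, which after one relabelling equals $2g^{ij}\eta^{pq}u_j\bigl(u_{pqi}-u_{iqp}\bigr)$.

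The crux is therefore the commutation $u_{pqi}-u_{iqp}$. Writing both as derivatives of the Hessian $h=D^2u$ and using that $h$ is symmetric, this difference collapses to a single commutator acting on the $1$-form $du$, namely $u_{pqi}-u_{iqp}=D_iD_pu_q-D_pD_iu_q=[D_i,D_p]u_q$. With the curvature convention fixed in \eqref{0822002} this is $[D_i,D_p]u_q=-R^{\repl{q}m}_{q\repl{m}ip}u_m$; substituting and swapping the dummy indices $p\leftrightarrow q$ with the symmetry of $\eta^{pq}$ reproduces exactly the middle term $-2g^{ij}\eta^{pq}R^{\repl{p}l}_{p\repl{l}iq}u_lu_j$. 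Tracking the sign and the index placement of the curvature tensor consistently with \eqref{0822002} when passing from the commutator to $R$ is the one delicate point; everything else is routine manipulation of the symmetric tensors $g^{ij}$ and $\eta^{pq}$.
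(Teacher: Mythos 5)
Your proposal is correct and takes essentially the same approach as the paper: compute $\partial_t\vartheta$ via the flow equation, Lemma \ref{0822040} and the splitting $(\hat\chi_t)_{p,q}=\hat\chi_{p,q}+u_{pq}$, expand $\Delta_\eta\vartheta$ directly, and account for the mismatch of third derivatives by the Ricci commutation identity, which yields the curvature term with the correct sign under the convention \eqref{0822002}. The only cosmetic difference is that the paper commutes $u_{ipq}$ into $u_{pqi}$ inside the Laplacian before subtracting, whereas you commute the leftover difference $u_{pqi}-u_{iqp}$ afterwards; contracted against the symmetric $\eta^{pq}$, these are the same identity.
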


\begin{proof}
Firstly, by using \eqref{0822047} and Lemma \ref{0822040}, we have
\begin{equation}\label{0823004}
\begin{split}
\frac{\partial}{\partial t} \vartheta&=\frac{\partial}{\partial t} (g^{i j} u_{i} u_{j})=2g^{i j}\left(\frac{\partial u}{\partial t}\right)_{i} u_{j}\\
&=2g^{i j}(\theta(\hat\chi_u))_{i} u_{j}=2g^{i j}\eta^{pq}(\hat\chi_{u})_{p,qi} u_{j}\\
&=2g^{i j}\eta^{p{q}}\hat \chi_{p,{q}i}u_{{j}}+2g^{i j}\eta^{p{q}}u_{pqi}u_{j}.
\end{split}
\end{equation}
Then direct computations show that
\begin{equation}\label{0823005}
\begin{split}
\Delta_{\eta}\vartheta&=\eta^{p{q}}(g^{ij}u_{i} u_{j})_{pq}\\
&=2\eta^{p{q}}g^{ij}(u_{ip}u_{jq}+u_{ipq}u_{j})\\
&=2\eta^{p{q}}g^{ij}(u_{ip}u_{jq}+u_{pqi}u_{j})+2\eta^{pq}g^{ij}R_{p\repl{l}iq}^{\repl{p}l} u_{j}u_{l}.
\end{split}
\end{equation}
Adding these equalities together gives us the required equation.
\end{proof}
\begin{lem}\label{0823006}
The function $\rho=g^{ij}g^{pq}u_{ip}u_{jq}$ satisfies the following evolution equation
\begin{equation}\label{0823007}
\begin{split}
(\frac{\partial}{\partial t} -\Delta_\eta) \rho&=-2g^{i j}g^{k l}\eta^{p b}\eta^{a q}\eta_{a b, l} (\hat\chi_u)_{p, qi}u_{k j} +2g^{i j}g^{k l}\eta^{p q} \hat \chi_{p, qil}u_{k j}\\
&\ \ \ -2g^{ij}g^{kl}\eta^{p q} u_{il p}u_{kjq}+2g^{i j}g^{k l}\eta^{p q} (\Xi_1)_{pqil}u_{k j},
\end{split}
\end{equation}
where
\begin{equation}\label{0823008}
(\Xi_1)_{pqil}=u_{kl}R^{\repl{p}k}_{p\repl{k}qi} +u_{k}R^{\repl{p}k}_{p\repl{k}qi,l} +u_{kp}R^{\repl{i}k}_{i\repl{k}ql}+u_{ik}R^{\repl{p}k}_{p\repl{k}ql} +u_{kq}R^{\repl{i}k}_{i\repl{k}pl} +u_{k}R^{\repl{i}k}_{i\repl{k}pl,q}.
\end{equation}
\end{lem}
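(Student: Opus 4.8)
My plan is to compute $\frac{\partial}{\partial t}\rho$ and $\Delta_\eta\rho$ separately, subtract them, and isolate a purely fourth-order discrepancy that I will then convert into curvature terms by commuting covariant derivatives.

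First I would treat the time derivative. Since the metric $g$ (and hence the Levi-Civita connection $D$ with its Christoffel symbols) is independent of $t$, the covariant Hessian commutes with $\partial_t$, so $\partial_t u_{ik}=(\partial_t u)_{ik}=(\theta(\hat\chi_u))_{ik}$ by \eqref{0822047}. Differentiating the first-derivative formula of Lemma \ref{0822040}, namely $(\theta(\hat\chi_u))_m=\eta^{pq}(\hat\chi_u)_{p,qm}$, once more and using $\eta^{pq}_{,l}=-\eta^{pb}\eta^{aq}\eta_{ab,l}$ together with $(\hat\chi_u)_{p,qil}=\hat\chi_{p,qil}+u_{pqil}$, I obtain
\[
(\theta(\hat\chi_u))_{il}=-\eta^{pb}\eta^{aq}\eta_{ab,l}(\hat\chi_u)_{p,qi}+\eta^{pq}\hat\chi_{p,qil}+\eta^{pq}u_{pqil}.
\]
Then $\partial_t\rho=2g^{ij}g^{kl}u_{kj}(\theta(\hat\chi_u))_{il}$ reproduces the first two terms of \eqref{0823007} verbatim and leaves the single fourth-order contribution $2g^{ij}g^{kl}\eta^{pq}u_{pqil}u_{kj}$.

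Next I would compute $\Delta_\eta\rho=\eta^{pq}D_pD_q\rho$ by the Leibniz rule. Because $Dg=0$, only derivatives of the Hessian survive and
\[
\Delta_\eta\rho=2g^{ij}g^{kl}\eta^{pq}u_{ikqp}u_{jl}+2g^{ij}g^{kl}\eta^{pq}u_{ikq}u_{jlp}.
\]
Using the symmetry of $\eta^{pq}$ (swap $p\leftrightarrow q$) and of the Hessian, and relabelling $k\leftrightarrow l$, the second summand becomes the third term of \eqref{0823007}, with the correct sign after the subtraction $-\Delta_\eta$. Subtracting, the only leftover is the fourth-order discrepancy, which after matching the two contraction patterns equals $2g^{ij}g^{kl}\eta^{pq}(u_{pqil}-u_{ilqp})u_{kj}$.

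The reduction of this discrepancy is the crux. I would first note that $\eta^{pq}u_{ilqp}=\eta^{pq}u_{ilpq}$, since commuting the outermost pair $D_pD_q$ on the Hessian $u_{il}$ produces a factor antisymmetric in $p,q$ that the symmetric $\eta^{pq}$ annihilates. It then remains to reduce $\eta^{pq}(u_{pqil}-u_{ilpq})=\eta^{pq}(D_lD_iD_pD_qu-D_qD_pD_iD_lu)$ to curvature, which I would do by iterating the Ricci identity in the form \eqref{0822002}--\eqref{0822003}: each transposition of two adjacent covariant derivatives contributes a curvature tensor contracted against a second-order derivative of $u$, and whenever the prescribed differentiation order forces a surviving derivative to act on the curvature factor it produces a term in which a once-differentiated curvature is contracted against $Du$. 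Collecting these contributions yields exactly the four curvature-times-Hessian summands and the two gradient-times-$(\nabla Rm)$ summands of $(\Xi_1)_{pqil}$ in \eqref{0823008}, giving the final term $2g^{ij}g^{kl}\eta^{pq}(\Xi_1)_{pqil}u_{kj}$. \textbf{The main obstacle} is precisely this four-fold commutation: it must be executed with the sign and index conventions fixed by \eqref{0822002}--\eqref{0822003}, and one has to track carefully, at each step, whether the leftover derivative lands on $u$ or on the curvature tensor, so as to separate the four $u_{\bullet\bullet}\,Rm$ terms from the two $u_\bullet\,(\nabla Rm)$ terms. Everything preceding this step is routine once $Dg=0$ and the $t$-independence of $D$ are invoked.
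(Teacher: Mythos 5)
Your proposal is correct and follows essentially the same route as the paper: differentiate the angle formula $(\theta(\hat\chi_u))_m=\eta^{pq}(\hat\chi_u)_{p,qm}$ once more for the time derivative, expand $\Delta_\eta\rho$ by Leibniz, and reduce the fourth-order discrepancy $u_{pqil}-u_{ilpq}$ to $(\Xi_1)_{pqil}$ by iterated Ricci identities, which is exactly the paper's telescoping decomposition $u_{pqil}=(u_{pqi}-u_{piq})_l+(u_{ipql}-u_{iplq})+(u_{ipl}-u_{ilp})_q+u_{ilpq}$. The only difference is that you describe the commutation scheme rather than writing it out term by term, but the structure you identify (four $D^2u\ast Rm$ terms plus two $Du\ast DRm$ terms) matches \eqref{0823008} precisely.
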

\begin{proof}
By using \eqref{0822047}, Lemma \ref{0822040} and $\hat\chi_u=\hat\chi+du$, we have
\begin{equation}\label{0823009}
\begin{split}
\frac{\partial}{\partial t}\rho&=2g^{i j}g^{k l}\left(\frac{\partial u}{\partial t}\right)_{i l}u_{kj}=2g^{i j}g^{k l}(\theta(\hat\chi_u))_{i l}u_{k j}=2g^{i j}g^{k l}(\eta^{p q} (\hat\chi_u)_{p, qi})_{l}u_{k j}\\
&=-2g^{i j}g^{k l}\eta^{p b}\eta^{a q}\eta_{a b, l} (\hat\chi_u)_{p, qi}u_{k j} +2g^{i j}g^{k l}\eta^{p q} \hat \chi_{p, qil}u_{k j}\\
&\ \ \ +2g^{i j}g^{k l}\eta^{p q} u_{pqil}u_{k j}.\\
\end{split}
\end{equation}
Direct computations give
\begin{equation}\label{0823010}
\begin{split}
\Delta_{\eta}\rho&=\eta^{p q}(g^{i j}g^{k l}u_{i l}u_{k j})_{p q}\\
&=2g^{ij}g^{kl}\eta^{p q} u_{il p}u_{k j q}+2g^{i j}g^{k l}\eta^{p q}u_{k j}u_{i l p q}.
\end{split}
\end{equation}
By the communication formula for curvature tensor, we have
\begin{equation}\label{0823011}
\begin{split}
u_{pqil}&=(u_{pqi}-u_{piq})_l+(u_{ipql}-u_{iplq})+(u_{ipl}-u_{ilp})_q+u_{ilpq}\\
&=u_{kl}R^{\repl{p}k}_{p\repl{k}qi} +u_{k}R^{\repl{p}k}_{p\repl{k}qi,l} +u_{kp}R^{\repl{i}k}_{i\repl{k}ql}+u_{ik}R^{\repl{p}k}_{p\repl{k}ql} +u_{kq}R^{\repl{i}k}_{i\repl{k}pl} +u_{k}R^{\repl{i}k}_{i\repl{k}pl,q}+u_{ilpq}\\
&:=(\Xi_1)_{pqil}+u_{ilpq}.
\end{split}
\end{equation}
Then we get the result required by adding the above equations together.
\end{proof}
\begin{rem}\label{08230110}
In Lemma $6.3$ and Proposition $6.2$ of \cite{STW}, Smoczyk-Tsui-Wang used the function
\begin{equation}\label{08230121}
\widetilde\rho=\frac{1}{2}\log\frac{\det\eta_{ij}}{\det g_{ij}}
\end{equation}
to consider the $C^2$-estimate of $u$, which aims to prove that the smallness of $|D^2u|^2_g$ is kept along the generalized Lagrangian mean curvature flow. This makes sense because in their case $\hat\chi=0$, the smallness of $\widetilde\rho$ is equivalent to the smallness of $|D^2u|^2_g$. However, we can not work with $\widetilde\rho$ because there is no such equivalence between $\widetilde\rho$ and $|D^2u|^2_g$ due to $\hat \chi\neq 0$.
\end{rem}
Define
\begin{equation}\label{0823013}
\Theta= g^{ip}g^{jq}g^{kr}u_{ijk}u_{pqr}
\end{equation}
and
\begin{equation}\label{0823014}
\Upsilon= \eta^{ms}g^{ip}g^{jq}g^{kr}u_{ijkm}u_{pqrs}.
\end{equation}

\begin{lem}\label{0823015}
The evolution equation of $\Theta$ along the generalized mean curvature flow \eqref{GLMCF1} is
\begin{equation}\label{0823016}
\begin{split}
(\frac{\partial}{\partial t}-\Delta_\eta)\Theta&=-2\Upsilon+2g^{ip}g^{jq}g^{kr}(\eta^{ms}_{,jk}u_{msi} +\eta^{ms}_{,j}u_{msik}+\eta^{ms}_{,k}u_{msij})u_{pqr}\\
&\ \ \ +2g^{ip}g^{jq}g^{kr}(\eta^{ms}_{,jk}\hat \chi_{m,si} +\eta^{ms}_{,j}\hat \chi_{m,sik}+\eta^{ms}_{,k}\hat \chi_{m,sij}+\eta^{ms}\hat \chi_{m,sijk})u_{pqr}\\
&\ \ \ +2 \eta^{ms}g^{ip}g^{jq}g^{kr}(\Xi_2)_{msijk}u_{pqr},
\end{split}
\end{equation}
where
\begin{equation}\label{0823017}
\begin{split}
(\Xi_2)_{msijk} &=(u_{l}R^{\repl{m}l}_{m\repl{l}si})_{,jk}+(u_{lm}R^{\repl{i}l}_{i\repl{l}sj}+u_{il}R^{\repl{m}l}_{m\repl{l}sj})_{,k}+(u_{l}R^{\repl{i}l}_{i\repl{l}mj})_{,sk}\\
&\ \ \ +(u_{ljm}R^{\repl{i}l}_{i\repl{l}sk}+u_{ilm}R_{j\repl{l}sk}^{\repl{j}l} +u_{ijl}R^{\repl{m}l}_{m\repl{l}sk}) +(u_{lj}R^{\repl{i}l}_{i\repl{l}mk} +u_{il}R^{\repl{j}l}_{j\repl{l}mk})_{,s}\\
&=D u \ast_g D^2 Rm +D^2 u\ast_g D Rm+D^3 u \ast_g Rm.
\end{split}
\end{equation}
\end{lem}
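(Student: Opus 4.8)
The plan is to follow the template set by the proofs of Lemma \ref{0823002} and Lemma \ref{0823006}: compute $\frac{\partial}{\partial t}\Theta$ and $\Delta_\eta\Theta$ separately and then take their difference. The new feature here, compared with those lower-order computations, is that $\Theta$ involves third derivatives of $u$, so the top-order terms are fifth covariant derivatives, and reordering them forces repeated use of the Ricci commutation identity.

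First I would compute the time derivative. Since $g$ is independent of $t$ and $u$ evolves by \eqref{0822047}, the product rule gives
\[
\frac{\partial}{\partial t}\Theta = 2g^{ip}g^{jq}g^{kr}\Big(\frac{\partial u}{\partial t}\Big)_{ijk}u_{pqr} = 2g^{ip}g^{jq}g^{kr}(\theta(\hat\chi_u))_{ijk}u_{pqr}.
\]
Using Lemma \ref{0822040} in the form $(\theta(\hat\chi_u))_i = \eta^{ms}(\hat\chi_u)_{m,si}$ and differentiating twice more by the product rule yields
\[
(\theta(\hat\chi_u))_{ijk} = \eta^{ms}_{,jk}(\hat\chi_u)_{m,si} + \eta^{ms}_{,j}(\hat\chi_u)_{m,sik} + \eta^{ms}_{,k}(\hat\chi_u)_{m,sij} + \eta^{ms}(\hat\chi_u)_{m,sijk}.
\]
Splitting $\hat\chi_u = \hat\chi + du$ so that $(\hat\chi_u)_{m,si} = \hat\chi_{m,si} + u_{msi}$ and likewise at each order then produces exactly the $\eta^{ms}_{,\cdot}u$-terms and $\eta^{ms}_{,\cdot}\hat\chi$-terms appearing in \eqref{0823016}, together with the two top-order pieces $\eta^{ms}u_{msijk}$ and $\eta^{ms}\hat\chi_{m,sijk}$.

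Next I would compute $\Delta_\eta\Theta = \eta^{ms}D_mD_s\Theta$. A direct product-rule expansion (using the symmetry of the summation indices, as in the passage from \eqref{0823005}) gives
\[
\Delta_\eta\Theta = 2\eta^{ms}g^{ip}g^{jq}g^{kr}u_{ijksm}u_{pqr} + 2\eta^{ms}g^{ip}g^{jq}g^{kr}u_{ijks}u_{pqrm}.
\]
After relabeling and using the symmetry of $\eta^{ms}$, the second term is precisely $2\Upsilon$ in the notation of \eqref{0823014}, while the first carries the fifth covariant derivative $u_{ijksm}$ contracted against $u_{pqr}$.

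The heart of the argument, and the step I expect to be the main obstacle, is reconciling the two top-order fifth-derivative contributions: $\eta^{ms}u_{msijk}$ arising from $\frac{\partial}{\partial t}\Theta$ and $\eta^{ms}u_{ijksm}$ arising from $\Delta_\eta\Theta$. These differ only in the order of the covariant derivatives, so I would repeatedly apply the Ricci commutation identity, exactly as in \eqref{0823011}, to move the pair $m,s$ past $i,j,k$. Each commutation produces a term of the schematic type $Rm\ast_g D^3u$, and carrying the resulting curvature through further differentiations produces $DRm\ast_g D^2u$ and $D^2Rm\ast_g Du$; collecting all of these is precisely the content of $(\Xi_2)_{msijk}$ in \eqref{0823017}, with the final schematic form $Du\ast_g D^2Rm + D^2u\ast_g DRm + D^3u\ast_g Rm$ following by degree counting. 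The bookkeeping of curvature indices is heavy but entirely routine. Subtracting $\Delta_\eta\Theta$ from $\frac{\partial}{\partial t}\Theta$ then leaves exactly $-2\Upsilon$, the lower-order $\eta^{ms}_{,\cdot}$ terms already identified, the fixed contribution $\eta^{ms}\hat\chi_{m,sijk}$ (which does not combine further since $\hat\chi$ is time-independent), and the curvature remainder $2\eta^{ms}g^{ip}g^{jq}g^{kr}(\Xi_2)_{msijk}u_{pqr}$, which is precisely \eqref{0823016}.
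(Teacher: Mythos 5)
Your proposal is correct and takes essentially the same route as the paper: compute $\frac{\partial}{\partial t}\Theta$ via Lemma \ref{0822040} with the splitting $\hat\chi_u=\hat\chi+du$, expand $\Delta_\eta\Theta$ by the product rule to isolate $2\Upsilon$, and convert the mismatch of fifth-order derivatives $u_{msijk}-u_{ijkms}$ into the curvature term $(\Xi_2)_{msijk}$ by repeated Ricci commutations. The only difference is presentational: the paper writes out the telescoping chain of adjacent-index commutators explicitly in \eqref{0823020}, which you describe schematically but accurately.
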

\begin{proof}
Direct computations show
\begin{equation}\label{0823018}
\begin{split}
\frac{\partial }{\partial t}\Theta&=2g^{ip}g^{jq}g^{kr}\left(\frac{\partial }{\partial t}u\right)_{ijk}u_{pqr}=2g^{ip}g^{jq}g^{kr}(\theta(\hat \chi_u))_{ijk}u_{pqr}\\
&=2g^{ip}g^{jq}g^{kr}(\eta^{ms} \hat \chi_{m,s i})_{j k} u_{pqr} +2g^{ip}g^{jq}g^{kr}(\eta^{ms} u_{ms i})_{j k} u_{pqr}\\
&=2g^{ip}g^{jq}g^{kr}(\eta^{ms}_{,jk}\hat \chi_{m,si} +\eta^{ms}_{,j}\hat \chi_{m,sik}+\eta^{ms}_{,k}\hat \chi_{m,sij}+\eta^{ms}\hat \chi_{m,sijk})u_{pqr}\\
&\ \ \ +2g^{ip}g^{jq}g^{kr}(\eta^{ms}_{,jk}u_{msi} +\eta^{ms}_{,j}u_{msik}+\eta^{ms}_{,k}u_{msij}+\eta^{ms}u_{msijk})u_{pqr},
\end{split}
\end{equation}
and
\begin{equation}\label{0823019}
\begin{split}
\Delta_\eta \Theta&=\eta^{ms}g^{ip}g^{jq}g^{kr}(u_{ijk}u_{pqr})_{ms}\\
&=2\eta^{ms}g^{ip}g^{jq}g^{kr}(u_{ijkm}u_{pqrs}+u_{ijkms}u_{pqr}).
\end{split}
\end{equation}
Using the communication formulas, we obtain
\begin{equation}\label{0823020}
\begin{split}
&\ \ \ \ u_{msijk}-u_{ijkms}\\
&=(u_{msi}-u_{mis})_{jk}+(u_{imsj}-u_{imjs})_{k}+(u_{imj}-u_{ijm})_{sk}\\
&\ \ \ +(u_{ijmsk}-u_{ijmks})+(u_{ijmk}-u_{ijkm})_{s}\\
&=(u_{l}R^{\repl{m}l}_{m\repl{l}si})_{,jk}+(u_{lm}R^{\repl{i}l}_{i\repl{l}sj}+u_{il}R^{\repl{m}l}_{m\repl{l}sj})_{,k}+(u_{l}R^{\repl{i}l}_{i\repl{l}mj})_{,sk}\\
&\ \ \ +(u_{ljm}R^{\repl{i}l}_{i\repl{l}sk}+u_{ilm}R_{j\repl{l}sk}^{\repl{j}l} +u_{ijl}R^{\repl{m}l}_{m\repl{l}sk}) +(u_{lj}R^{\repl{i}l}_{i\repl{l}mk} +u_{il}R^{\repl{j}l}_{j\repl{l}mk})_{,s}\\
&:=(\Xi_2)_{msijk}.
\end{split}
\end{equation}
Then we get the result needed.
\end{proof}

\section{The long-time existence of the generalized Lagrangian mean curvature flow}
\label{sec-4}
In this section, we first prove that the smallness of $|D^2u|^2_g$ is preserved along the generalized mean curvature flow. Then by this property, we derive the $C^3$-estimate of $u$ and prove the long-time existence of flow \eqref{GLMCF1} when $|D^2u_0|^2_g$ is sufficiently small.
\subsection{Smallness of $|D^2u|^2_g$ along the generalized Lagrangian mean curvature flow}\label{sec-4.1}
If the graph $\Sigma_{\hat\chi}$ induced by $\hat \chi$ is a special Lagrangian submanifold in $T^*X$, that is, $\theta(\hat \chi)$ is a constant, we prove the following lemmas for $\tau=(u-u_0(p)-\theta(\hat \chi)t)^2$, $\vartheta$ and $\rho$.
\begin{lem}\label{0824001}
Along the generalized Lagrangian mean curvature flow \eqref{GLMCF1}, $\tau$ satisfies inequality
\begin{equation}\label{0824002}
(\frac{\partial }{\partial t}-\Delta_{\eta})\tau\leqslant C(1+\rho)\rho \tau^{\frac{1}{2}}+(-C_0+C\rho)\vartheta,
\end{equation}
where $C_0$ and $C$ are positive constants depending only on $n$, $g$ and $\hat\chi$.
\end{lem}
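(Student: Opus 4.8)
The plan is to start from the exact identity of Lemma \ref{0822053},
\begin{equation*}
(\frac{\partial}{\partial t} - \Delta_\eta)\tau = 2w\,\bigl(\theta(\hat\chi_u) - \theta(\hat\chi) - \Delta_\eta u\bigr) - 2\eta^{ij}u_iu_j,
\end{equation*}
where I abbreviate $w := u - u_0(p) - \theta(\hat\chi)t$, so that $\tau = w^2$ and $|w| = \tau^{1/2}$. Two groups of terms must be controlled: the gradient term $-2\eta^{ij}u_iu_j$, which should furnish the good negative multiple of $\vartheta$, and the product $2w(\theta(\hat\chi_u) - \theta(\hat\chi) - \Delta_\eta u)$, which should be absorbed into $C(1+\rho)\rho\tau^{1/2}$.

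For the product term the key point is that $\theta(\hat\chi_u) - \theta(\hat\chi)$ alone is only linear in $D^2u$, i.e. of size $\rho^{1/2}$, a ``bad'' term when $\rho$ is small; subtracting $\Delta_\eta u$ cancels this linear part. Concretely, along the affine path $\hat\chi_s := \hat\chi + s\,du$, $s\in[0,1]$, with induced metric $(\eta_s)_{ij} := g_{ij} + (\hat\chi_s)_{k,i}g^{kl}(\hat\chi_s)_{l,j}$, Lemma \ref{0822040} gives, exactly as in the proof of Theorem \ref{0815100},
\begin{equation*}
\theta(\hat\chi_u) - \theta(\hat\chi) - \Delta_\eta u = \int_0^1 \eta_s^{pq}u_{pq}\,ds - \eta^{pq}u_{pq} = \int_0^1 \bigl(\eta_s^{pq} - \eta^{pq}\bigr)u_{pq}\,ds,
\end{equation*}
where $\eta = \eta_1$ and the linear term has cancelled. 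Writing $(\eta_s)_{ij} = \hat\eta_{ij} + s A_{ij} + s^2 B_{ij}$ with $\hat\eta_{ij} = g_{ij} + \hat\chi_{k,i}g^{kl}\hat\chi_{l,j}$, $A_{ij}$ linear in $D^2u$ (coefficients involving $\hat\chi$) and $B_{ij} = u_{ki}g^{kl}u_{lj}$ quadratic, the resolvent identity $\eta_s^{-1} - \eta^{-1} = -\eta_s^{-1}(\eta_s - \eta)\eta^{-1}$ together with the uniform bound $\eta_s \geqslant g$ (hence $\eta_s^{-1}\leqslant g^{-1}$) yields $|\eta_s^{pq} - \eta^{pq}|\leqslant C(\rho^{1/2} + \rho)$. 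Contracting against $u_{pq}$ and using $|D^2u|_g = \rho^{1/2}$ gives $|\theta(\hat\chi_u) - \theta(\hat\chi) - \Delta_\eta u|\leqslant C(\rho + \rho^{3/2})\leqslant C\rho(1+\rho)$, so the product term is bounded by $2|w|\cdot C\rho(1+\rho) = C(1+\rho)\rho\tau^{1/2}$.

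For the gradient term I would estimate the eigenvalues of $\eta_{ij}$ relative to $g_{ij}$. Since $\eta_{ij} - g_{ij} = (\hat\chi_u)_{k,i}g^{kl}(\hat\chi_u)_{l,j}$ is positive semidefinite with operator norm at most $|D\hat\chi_u|_g^2\leqslant 2|D\hat\chi|_g^2 + 2\rho =: 2C_1 + 2\rho$, we have $g\leqslant \eta\leqslant (1+2C_1+2\rho)g$, whence $\eta^{ij}u_iu_j\geqslant (1+2C_1+2\rho)^{-1}\vartheta$. The elementary bound $(a+x)^{-1}\geqslant a^{-1} - a^{-2}x$ with $a = 1+2C_1$ and $x = 2\rho$ then gives $-2\eta^{ij}u_iu_j\leqslant (-C_0 + C\rho)\vartheta$ with $C_0 = 2(1+2C_1)^{-1}$ and $C = 4(1+2C_1)^{-2}$, depending only on $n$, $g$, $\hat\chi$. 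Adding the two contributions and enlarging $C$ to dominate both yields \eqref{0824002}.

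The main obstacle is the cancellation in the product term: a direct estimate of $\theta(\hat\chi_u) - \theta(\hat\chi)$ leaves a term of order $\rho^{1/2}\tau^{1/2}$ that cannot be absorbed for small $\rho$, and it is precisely the matching of $\Delta_\eta u = \eta^{pq}u_{pq}$ with the $s=1$ endpoint of the path integral $\int_0^1\eta_s^{pq}u_{pq}\,ds$ that lowers this to quadratic order. The secondary technical care is to arrange that $C_0$ and $C$ depend only on $n$, $g$, $\hat\chi$ and not on an a priori bound for $\rho$; I handle this through the eigenvalue estimate above rather than by expanding $g^{ij} - \eta^{ij}$ directly, since the latter would leave an unwanted $O(1)$ multiple of $\vartheta$ coming from the fixed part $\hat\eta_{ij} - g_{ij}$.
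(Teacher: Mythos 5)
Your proof is correct and takes essentially the same route as the paper: the same starting identity from Lemma \ref{0822053}, the same key cancellation of the linear part of $\theta(\hat\chi_u)-\theta(\hat\chi)-\Delta_\eta u$ by interpolating along $\hat\chi+s\,du$ and applying Lemma \ref{0822040}, and the same comparison of $\eta^{-1}$ with a fixed background metric to extract the good term $(-C_0+C\rho)\vartheta$. The only differences are cosmetic: you use the integral form of the interpolation where the paper invokes the mean value theorem (obtaining $(\hat\eta^{pq}-\eta^{pq})u_{pq}$ at an intermediate parameter $\xi_t$), and your eigenvalue bound $g\leqslant\eta\leqslant(1+2C_1+2\rho)g$ together with $(a+x)^{-1}\geqslant a^{-1}-a^{-2}x$ handles the gradient term slightly more cleanly, avoiding the $\rho^{1/2}\vartheta$ contribution that the paper absorbs via Young's inequality in \eqref{0824007}.
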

\begin{proof}
From Lemma \ref{0822053}, we have
\begin{equation}\label{0824003}
(\frac{\partial }{\partial t}-\Delta_{\eta})(u-u_0(p)-\theta(\hat \chi)t)^2=2(u-u_0(p)-\theta(\hat \chi)t)(\theta(\hat \chi_{u})-\theta(\hat \chi)-\Delta_{\eta}u)-2\eta^{pq}u_{p}u_{q}.
\end{equation}
We consider $\theta(\hat\chi_{su})$ as a smooth function on $s\in[0,1]$, where $\hat\chi_{su}=\hat\chi+sdu$. By using \eqref{0822041}, we have
\begin{equation}\label{0824004}
\begin{split}
\theta(\hat \chi_{u})-\theta(\hat \chi)-\Delta_{\eta}u&=\theta(\hat \chi_{1\cdot u})-\theta(\hat \chi_{0\cdot u})-\Delta_{\eta}u\\
&=\frac{d}{ds} \theta(\hat \chi_{su})\Big|_{s=\xi_t\in (0,1)}-\Delta_{\eta}u\\
&=\hat\eta^{pq}u_{pq} -\eta^{pq}u_{pq},
\end{split}
\end{equation}
where $\hat\eta^{pq}$ and $\eta^{pq}$ are the inverse of $\hat \eta_{pq}=g_{pq}+(\hat\chi_{\xi_t\cdot u})_{k,p}g^{kl}(\hat\chi_{\xi_t\cdot u})_{l,q}$ and $\eta_{pq}=g_{pq}+(\hat\chi_{u})_{k,p}g^{kl}(\hat\chi_{u})_{l,q}$ respectively. Moreover, 
\begin{equation}\label{08240041}
\hat\eta\geqslant g\ \ \ \text{and}\ \ \ \eta\geqslant g.
\end{equation}
Therefore, we have
\begin{equation}\label{0824005}
\begin{split}
&\ \ \ \theta(\hat \chi_{u})-\theta(\hat \chi)-\Delta_{\eta}u\\
&=\eta^{ps}(\eta_{sm}-\hat \eta_{sm})\hat\eta^{mq} u_{pq}\\
&=\eta^{ps}((1-\xi_t)(\hat\chi_{i,s}g^{ij}u_{jm}+u_{is}g^{ij}\hat\chi_{j,m}) +(1-\xi_t^2) u_{is}g^{ij}u_{jm})\hat\eta^{mq} u_{pq}\\
&\leqslant C(1+\rho)\rho.
\end{split}
\end{equation}
Then inequality \eqref{0824003} can be controlled as 
\begin{equation}\label{0824006}
(\frac{\partial }{\partial t}-\Delta_{\eta})\tau\leqslant C(1+\rho)\rho \tau^{\frac{1}{2}} -2\eta^{pq}u_{p}u_{q}.
\end{equation}
We deal with the term $-2\eta^{pq}u_{p}u_{q}$ as follows,
\begin{equation}\label{0824007}
\begin{split}
-2\eta^{pq}u_{p}u_{q}&=-2(\eta^{pq}-\hat \eta^{pq})u_{p}u_{q}-2\hat \eta^{pq }u_{p}u_{q}\\
&=2\eta^{pm}(\eta_{ms}-\hat \eta_{ms})\hat \eta^{sq}u_{p}u_{q} -2\hat \eta^{pq}u_{p}u_{q}\\
&=2\eta^{pm}(\hat\chi_{i,s}g^{ij}u_{jm}+u_{is}g^{ij}\hat\chi_{j,m} + u_{is}g^{ij}u_{jm})\hat \eta^{sq}u_{p}u_{q} -2\hat \eta^{pq}u_{p}u_{q}\\
&\leqslant-2C_0\vartheta+(\rho^{\frac{1}{2}}+\rho)\vartheta\\
&\leqslant(-C_0+C\rho)\vartheta,
\end{split}
\end{equation}
where $\hat\eta_{pq}=g_{pq}+(\hat\chi)_{k,p}g^{kl}(\hat\chi)_{l,q}$. We complete the proof.
\end{proof}

\begin{lem}\label{0824008}
Along the generalized Lagrangian mean curvature flow \eqref{GLMCF1}, there holds 
\begin{equation}\label{0824009}
(\frac{\partial }{\partial t}-\Delta_{\eta})\vartheta \leqslant (-C_1+C\rho)\rho+C\vartheta,
\end{equation}
where $C_1$ and $C$ are positive constants depending only on $n$,  $g$ and $\hat \chi$.
\end{lem}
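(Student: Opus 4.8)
The plan is to start from the exact evolution equation for $\vartheta$ established in Lemma \ref{0823002},
\[
(\tfrac{\partial}{\partial t}-\Delta_\eta)\vartheta
= \underbrace{-2\eta^{pq}g^{ij}u_{ip}u_{jq}}_{I}
\underbrace{-2g^{ij}\eta^{pq}R_{p\repl{l}iq}^{\repl{p}l}u_lu_j}_{II}
+\underbrace{2g^{ij}\eta^{pq}\hat\chi_{p,qi}u_j}_{III},
\]
and to estimate the three terms so as to extract a good negative multiple of $\rho$ from $I$, a harmless $C\vartheta$ from $II$, and (after invoking the special Lagrangian hypothesis) a term bounded by $\varepsilon\rho+C\rho^2+C\vartheta$ from $III$. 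Throughout I work in the region of bounded Hessian, say $\rho\leqslant 1$, which is where the inequality will be applied in the smallness-preservation argument.

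For the leading term $I$, I would use that $\eta$ is uniformly equivalent to $g$. By \eqref{08240041} we have $\eta\geqslant g$, while $\eta_{pq}=g_{pq}+(\hat\chi_u)_{k,p}g^{kl}(\hat\chi_u)_{l,q}$ with $\hat\chi_u=\hat\chi+du$ gives $\eta\leqslant\Lambda g$ for $\Lambda=1+|D\hat\chi+D^2u|^2_g\leqslant 1+2|D\hat\chi|^2_g+2\rho$, hence $\Lambda\leqslant C(n,g,\hat\chi)$ when $\rho\leqslant1$. Writing $A=(u_{ip})$, one checks $I=-2\operatorname{\mathop{tr}}(\eta^{-1}Ag^{-1}A)$ and $\rho=\operatorname{\mathop{tr}}(g^{-1}Ag^{-1}A)$; since $\eta^{-1}\geqslant\Lambda^{-1}g^{-1}$ and $Ag^{-1}A\geqslant0$, the inequality $\operatorname{\mathop{tr}}(PQ)\geqslant0$ for positive definite $P,Q$ yields $I\leqslant-2\Lambda^{-1}\rho\leqslant-C_1'\rho$ with $C_1'=C_1'(n,g,\hat\chi)>0$.

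Term $II$ is quadratic in $Du$ with bounded coefficient: as $|Rm|_g$ is bounded on the compact $X$ and $\eta^{-1}\leqslant g^{-1}$, I get $|II|\leqslant C\vartheta$ at once. The term $III$ is the crux, since the naive bound $|III|\leqslant C|D^2\hat\chi|_g\,|\eta^{-1}|\,|Du|_g\leqslant C\vartheta^{1/2}$ produces only an uncontrollable constant after Young's inequality. Here I would use the special Lagrangian hypothesis: applying Lemma \ref{0822040} to $\hat\chi$ together with the constancy of $\theta(\hat\chi)$ (cf.\ \eqref{0822042}) gives the identity $\hat\eta^{pq}\hat\chi_{p,qi}=0$, where $\hat\eta_{pq}=g_{pq}+\hat\chi_{k,p}g^{kl}\hat\chi_{l,q}$. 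Subtracting this vanishing quantity replaces $\eta^{pq}$ by the difference,
\[
III=2g^{ij}(\eta^{pq}-\hat\eta^{pq})\hat\chi_{p,qi}u_j .
\]
Since $\eta_{mn}-\hat\eta_{mn}=\hat\chi_{k,m}g^{kl}u_{ln}+u_{km}g^{kl}\hat\chi_{l,n}+u_{km}g^{kl}u_{ln}$ is linear plus quadratic in $D^2u$, one gets $|\eta^{pq}-\hat\eta^{pq}|\leqslant C(\rho^{1/2}+\rho)$, and hence $|III|\leqslant C(\rho^{1/2}+\rho)\vartheta^{1/2}$.

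Finally I would close the estimate by Young's inequality: $C\rho^{1/2}\vartheta^{1/2}\leqslant\varepsilon\rho+C_\varepsilon\vartheta$ and $C\rho\vartheta^{1/2}\leqslant C\rho^2+C\vartheta$, so that $|III|\leqslant\varepsilon\rho+C\rho^2+C\vartheta$. Combining with the bounds on $I$ and $II$ and choosing $\varepsilon=C_1'/2$ absorbs the $\varepsilon\rho$ into the negative contribution of $I$, leaving
\[
(\tfrac{\partial}{\partial t}-\Delta_\eta)\vartheta\leqslant -C_1\rho+C\rho^2+C\vartheta=(-C_1+C\rho)\rho+C\vartheta,
\]
which is the claim. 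The step I expect to be the main obstacle is the treatment of $III$: the special Lagrangian identity $\hat\eta^{pq}\hat\chi_{p,qi}=0$ is exactly what upgrades the dangerous first-order term of size $\vartheta^{1/2}$ into one carrying the extra factor $\rho^{1/2}+\rho$, turning the coefficient of $\vartheta$ into a harmless constant rather than an uncontrollable one. This is the $\hat\eta$-trick announced in the introduction.
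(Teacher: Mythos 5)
Your proof is correct and, at its crux, coincides with the paper's: you start from Lemma \ref{0823002}, bound the curvature term by $C\vartheta$, and handle the dangerous first-order term $2g^{ij}\eta^{pq}\hat\chi_{p,qi}u_j$ by exactly the paper's device --- inserting the special-Lagrangian identity $\hat\eta^{pq}\hat\chi_{p,qi}=0$ from \eqref{0822042}, estimating $|\eta^{-1}-\hat\eta^{-1}|_g\leqslant C(\sqrt{\rho}+\rho)$, and closing with Young's inequality; this is precisely the chain \eqref{0824011}--\eqref{0824015}. The single point of divergence is the good term $-2\eta^{pq}g^{ij}u_{ip}u_{jq}$: the paper applies the same $\hat\eta$-comparison to it as well (the argument of \eqref{0824007}), writing it as $-2g^{ij}\hat\eta^{pq}u_{ip}u_{jq}-2g^{ij}(\eta^{pq}-\hat\eta^{pq})u_{ip}u_{jq}\leqslant(-C_0+C\rho)\rho$ with no hypothesis on $\rho$, whereas your bound via the uniform equivalence $g\leqslant\eta\leqslant\Lambda g$ needs your standing restriction $\rho\leqslant 1$ to make $\Lambda^{-1}$ a uniform constant, so as written you prove a conditional version of an unconditional lemma. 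This is a blemish rather than a genuine gap: your estimates of the other two terms are unconditional, the first term is always nonpositive, and for $\rho\geqslant 1$ the right-hand side of \eqref{0824009} dominates once the constant $C$ there is enlarged (take $C\geqslant C'+C_1+\varepsilon$, with $C'$ the constant in your bound for the third term, so that $(-C_1+C\rho)\rho\geqslant \varepsilon\rho+C'\rho^2$ when $\rho\geqslant 1$); alternatively, replacing your treatment of the first term by the paper's two-line $\hat\eta$-splitting makes the whole argument hypothesis-free. Either way the constants depend only on $n$, $g$ and $\hat\chi$ as required, and the lemma as actually used in Theorem \ref{0824031} (where $\rho\leqslant Q<\delta'$ with $\delta'$ small) is unaffected by the restriction.
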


\begin{proof}
From Lemma \ref{0823002}, the evolution equation of $\vartheta$ along the generalized mean curvature flow \eqref{GLMCF1} is given by
\begin{equation}\label{0824011}
\begin{split}
(\frac{\partial}{\partial t} -\Delta_\eta)\vartheta &=-2\eta^{pq}g^{ij} u_{ip}u_{jq}-2 g^{ij}\eta^{pq} R_{p\repl{l}iq}^{\repl{p}l} u_{l}u_{j} +2g^{ij}\eta^{pq} \hat \chi_{p,qi}u_{j}\\
&=-2\eta^{pq}g^{ij} u_{ip}u_{jq}-2 g^{ij}\eta^{pq} R_{p\repl{l}iq}^{\repl{p}l} u_{l}u_{j} +2g^{ij}(\eta^{pq}-\hat\eta^{pq})\hat \chi_{p,qi}u_{j},
\end{split}
\end{equation}
where we use the fact $\hat\eta^{pq}\hat\chi_{p,qi}=0$ (see \eqref{0822042}) in the second equality. For simplicity, we denote $\chi:=\hat\chi_u=\hat\chi+du$. Then we have
\begin{equation}\label{0824012}
\begin{split}
\eta^{pq}-\hat \eta^{pq}&=\eta^{pl}(\hat \eta_{sl}-\eta_{sl})\hat \eta^{sq}=\eta^{pl}(\hat \chi_{k,s}g^{kj}\hat \chi_{j,l} -\chi_{k,s}g^{kj}\chi_{j,l})\hat \eta^{sq}\\
&=-\eta^{pl}\hat \eta^{sq}(\hat \chi_{k,s}g^{kj}u_{jl}+ u_{ks}g^{kj}\hat \chi_{j,l}+u_{ks}g^{kj}u_{jl}),
\end{split}
\end{equation}
which implies that
\begin{equation}\label{0824013}
|\eta^{-1}-\hat \eta^{-1}|_g\leqslant C(\sqrt{\rho}+\rho ).
\end{equation}
The second term in the right hand side of \eqref{0824011} can be bouded as follows,
\begin{equation}\label{0824014}
-2 g^{ij}\eta^{pq} R_{p\repl{l}iq}^{\repl{p}l} u_{l}u_{j} \leqslant C\vartheta.
\end{equation}
By using the Cauchy-Schwarz inequality and \eqref{0824013}, for any $\varepsilon>0$, we have
\begin{equation}\label{0824015}
2g^{ij}(\eta^{pq}-\hat \eta^{pq})\hat \chi_{p,qi}u_{j}\leqslant \varepsilon(\rho+\rho^2)+C(\varepsilon)\vartheta.
\end{equation}
The same argument as in \eqref{0824007} imply that
\begin{equation}\label{0824017}
-2g^{ij}\eta^{pq}u_{ip}u_{jq}\leqslant (-C_0+C\rho)\rho.
\end{equation}
Taking $\varepsilon$ sufficiently small, we have
\begin{equation}\label{0824016}
\begin{split}
(\frac{\partial }{\partial t}-\Delta_{\eta})\vartheta &\leqslant (-C_0+C\rho)\rho+\varepsilon(1+\rho)\rho +C\vartheta\\
&\leqslant (-C_1+C\rho)\rho+C\vartheta,
\end{split}
\end{equation}
where $C_1$ and $C$ are positive constants depending only on $n$, $g$ and $\hat \chi$. We complete the proof.
\end{proof}
\begin{lem}\label{0824018}
The function $\rho$ satisfies the following inequality along the generalized Lagrangian mean curvature flow \eqref{GLMCF1},
\begin{equation}\label{0824019}
(\frac{\partial}{\partial t}-\Delta_\eta)\rho\leqslant (-1+C\rho)\eta^{pq}g^{ij}g^{kl}u_{ikq}u_{jlp}+C(1+\rho)\rho+C(1+\rho)\vartheta,
\end{equation}
where $C$ is a positive constant depending only on $n$, $g$ and $\hat \chi$.
\end{lem}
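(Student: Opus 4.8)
The plan is to read off the evolution of $\rho$ from Lemma \ref{0823006} and reorganize the four right-hand terms of \eqref{0823007}, which I label $(I)$--$(IV)$ in the order they appear, so that all of them except a single good negative term are absorbed. The engine is the third term $(III)=-2g^{ij}g^{kl}\eta^{pq}u_{ilp}u_{kjq}$. Since a third covariant derivative of a function is symmetric in its first two slots, $u_{abc}=u_{bac}$, relabeling the summed indices and using the symmetry of $g^{ij}$ and $\eta^{pq}$ converts $(III)$ into exactly $-2Q$, where I write $Q:=\eta^{pq}g^{ij}g^{kl}u_{ikq}u_{jlp}$ for the quadratic form in the statement. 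I keep one copy $-Q$ as the leading term and reserve the second copy $-Q$ to soak up every third-derivative error produced elsewhere. The term $(IV)$ is immediate: by \eqref{0823008} the tensor $(\Xi_1)_{pqil}$ contains at most second derivatives of $u$ (schematically $Du\ast DRm+D^2u\ast Rm$), so contracting against $u_{kj}$ and using compactness of $X$ yields $(IV)\leqslant C|D^2u|^2+C|Du|\,|D^2u|\leqslant C\rho+C\vartheta$.

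The heart of the matter is the pair $(I)+(II)$, where the bad first-power term $|D^2u|$ flagged in the introduction lives. Using $\eta^{pq}_{,l}=-\eta^{pb}\eta^{aq}\eta_{ab,l}$ and writing $\chi:=\hat\chi_u=\hat\chi+du$, I rewrite $(I)+(II)=2g^{ij}g^{kl}u_{kj}\,T$ with $T:=\eta^{pq}_{,l}\chi_{p,qi}+\eta^{pq}\hat\chi_{p,qil}$. Then I insert the fixed metric $\hat\eta$ through $\eta^{pq}=\hat\eta^{pq}+(\eta^{pq}-\hat\eta^{pq})$ and $\eta^{pq}_{,l}=\hat\eta^{pq}_{,l}+(\eta^{pq}_{,l}-\hat\eta^{pq}_{,l})$, recording that $|\eta^{-1}-\hat\eta^{-1}|_g\leqslant C(\sqrt\rho+\rho)$ by \eqref{0824013}, that $\eta^{pq}_{,l}-\hat\eta^{pq}_{,l}$ is a sum of $D^3u$ and $D^2u\ast D^2u$ contributions, and that $\hat\eta$ together with all its derivatives is bounded since it is built from $\hat\chi$ alone. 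The only summand of $T$ carrying no factor of a derivative of $u$ is the bounded piece $\hat\eta^{pq}_{,l}\hat\chi_{p,qi}+\hat\eta^{pq}\hat\chi_{p,qil}$. Differentiating the special Lagrangian identity $\hat\eta^{pq}\hat\chi_{p,qi}=(\theta(\hat\chi))_i\equiv0$ (see \eqref{0822042}) once in the direction $l$ gives $\hat\eta^{pq}\hat\chi_{p,qil}=-\hat\eta^{pq}_{,l}\hat\chi_{p,qi}$, so this bounded piece cancels identically. This is the crux: afterwards every surviving summand of $T$ carries at least one factor of $u_{pqi}\sim|D^3u|$ or of $\eta^{-1}-\hat\eta^{-1}\sim\sqrt\rho$.

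It then remains to multiply $T$ by $u_{kj}\sim|D^2u|=\sqrt\rho$ and estimate summand by summand. The pieces linear in $D^3u$ become $\leqslant C\sqrt\rho\,|D^3u|$, which the Peter--Paul inequality bounds by $\varepsilon Q+C(\varepsilon)\rho$; the pieces carrying the factor $\eta^{-1}-\hat\eta^{-1}\sim\sqrt\rho$ contribute at most $C(1+\rho)\rho$; and the single piece quadratic in $D^3u$, arising when the $D^3u$-part of $\eta^{pq}_{,l}$ meets $u_{pqi}$, is $\leqslant C(\sqrt\rho+\rho)Q$. Taking $\varepsilon$ small and using the elementary bound $C\sqrt\rho\leqslant\tfrac12+\tfrac{C^2}{2}\rho$, all of these $Q$-errors are dominated by the reserved copy $-Q$ from $(III)$, so the total coefficient of $Q$ becomes $-1+C\rho$ and the lower-order remainder is exactly $C(1+\rho)\rho+C(1+\rho)\vartheta$, as claimed. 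The main obstacle is precisely the cancellation in the second paragraph: without the hypothesis that $\theta(\hat\chi)$ is constant, the term $2g^{ij}g^{kl}\hat\eta^{pq}\hat\chi_{p,qil}u_{kj}$ is genuinely of order $\sqrt\rho$, which cannot be controlled by $\rho$ when $\rho$ is small; only the special Lagrangian identity $\hat\eta^{pq}\hat\chi_{p,qil}=-\hat\eta^{pq}_{,l}\hat\chi_{p,qi}$ pairs it against the $\hat\eta^{pq}_{,l}\hat\chi_{p,qi}$ part of $(I)$ to eliminate it.
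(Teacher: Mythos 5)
Your proposal is correct and follows essentially the same route as the paper's proof: starting from Lemma \ref{0823006}, recognizing the third term as $-2\eta^{pq}g^{ij}g^{kl}u_{ikq}u_{jlp}$, bounding the $\Xi_1$-term by $C\rho+C\vartheta$, and, crucially, inserting $\hat\eta$ so that the only term with no factor of a derivative of $u$ is killed by the differentiated special Lagrangian identity $(\hat\eta^{pq}\hat\chi_{p,qi})_{,l}=0$ (the paper's \eqref{0822042}--\eqref{0822043}, used in \eqref{08240221}), with the remaining pieces absorbed by Cauchy--Schwarz into the reserved negative third-derivative term. The only differences are cosmetic: the paper works in normal coordinates diagonalizing $D^2u$ and splits the $u_{pqi}$-part of the first term off as a separate term \eqref{0824026}, whereas you keep everything invariant and grouped in $T$.
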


\begin{proof}
At a point $p\in X$, we choose the normal coordinate system with respect to $g$ such that
\begin{equation}\label{0824020}
g_{ij}(p)=\delta_{ij}\ \ \ \text{ and }\ \ \ u_{ij}(p)=\sigma_i\delta_{ij}.
\end{equation}
Then by Lemma \ref{0823006}, the evolution equation of $\rho$ along the generalized mean curvature flow \eqref{GLMCF1} is given by
\begin{equation}\label{0824021}
\begin{split}
(\frac{\partial }{\partial t}-\Delta_{\eta})\rho&=-2\eta^{pb}\eta^{aq}\eta_{ab,i}\hat \chi_{p,qi}\sigma_i +2\eta^{pq} \hat\chi_{p,qii}\sigma_i -2\eta^{pq} u_{ikp}u_{kiq} +2\sigma_i\eta^{pq}(\Xi_1)_{pq ii}\\
&\ \ \ -2\eta^{pb}\eta^{aq}\eta_{ab,i}u_{pqi}\sigma_i.
\end{split}
\end{equation}
We deal with the first two terms as follows,
\begin{equation}\label{0824022}
\begin{split}
&\ \ \ -2\eta^{pb}\eta^{aq}\eta_{ab,i}\hat \chi_{p,qi}\sigma_i +2\eta^{pq} \hat\chi_{p,qii}\sigma_i\\
&=-2\eta^{pb}\eta^{aq}(\eta_{ab,i}-\hat \eta_{ab,i})\hat \chi_{p,qi}\sigma_i-2\eta^{pb}(\eta^{aq}-\hat\eta^{aq})\hat\eta_{ab,i}\hat \chi_{p,qi}\sigma_i\\
&\ \ \ -2(\eta^{pb}-\hat \eta^{pb})\hat \eta^{aq}\hat \eta_{ab,i} \hat \chi_{p,qi}\sigma_i+2(\eta^{pq}-\hat \eta^{pq})\hat\chi_{p,qii}\sigma_i\\
&\ \ \ -2\hat \eta^{pb}\hat \eta^{aq}\hat \eta_{ab,i} \hat \chi_{p,qi}\sigma_i+2\hat \eta^{pq}\hat\chi_{p,qii}\sigma_i.
\end{split}
\end{equation}
Since $\hat\theta=\theta(\hat\chi)$ is a constant, according to \eqref{0822042}, we have
\begin{equation}\label{08240221}
-2\hat \eta^{pb}\hat \eta^{aq}\hat \eta_{ab,i} \hat \chi_{p,qi}\sigma_i+2\hat \eta^{pq}\hat\chi_{p,qii}\sigma_i=2(\hat\eta^{pq}\hat\chi_{p,qi})_i\sigma_i=0.
\end{equation}
Same arguments as in \eqref{0824012} imply that
\begin{equation}\label{0824023}
\begin{split}
\eta^{pq}-\hat \eta^{pq}&=\eta^{pl}(\hat \eta_{sl}-\eta_{sl})\hat \eta^{sq}\\
&=-\eta^{pl}\hat \eta^{sq}(\hat \chi_{k,s}u_{kl}+ u_{ks}\hat \chi_{k,l}+u_{ks}u_{kl})
\end{split}
\end{equation}
and that
\begin{equation}\label{0824024}
\begin{split}
(\eta_{ab,i}-\hat \eta_{ab,i})&=\hat \chi_{k,ai}u_{kb} +u_{kai}\hat\chi_{k,b} +u_{kai}u_{kb}+\hat \chi_{k,a}u_{kbi} +u_{ka}\hat\chi_{k,bi} +u_{ka}u_{kbi}\\
&=\sigma_a(\hat \chi_{a,bi}+u_{abi})+\sigma_b(\hat \chi_{b,ai}+u_{bai})+u_{kai}\hat\chi_{k,b}+\hat \chi_{k,a}u_{kbi}\\
&=\sigma_a(\hat \chi_{a,bi}+u_{aib})+\sigma_b(\hat \chi_{b,ai}+u_{bia})+\sigma_au_lR^{\repl{a}l}_{a\repl{l}bi}+\sigma_bu_lR^{\repl{b}l}_{b\repl{l}ai}\\
&\ \ \ +u_{kia}\hat\chi_{k,b}+u_{l}R_{k\repl{l}ai}^{\repl{k}l}\hat\chi_{k,b} +\hat \chi_{k,a}u_{k i b}+u_{l}R^{\repl{k}l}_{k\repl{l}bi}\hat \chi_{k,a}.
\end{split}
\end{equation}
By using the Cauchy-Schwarz inequality and \eqref{0824022}-\eqref{0824024}, the first two terms on the right hand side of \eqref{0824021} can be controlled as 
\begin{equation}\label{08240241}
-2\eta^{pb}\eta^{aq}\eta_{ab,i}\hat \chi_{p,qi}\sigma_i +2\eta^{pq} \hat\chi_{p,qii}\sigma_i\leqslant \frac{1}{2}\eta^{pq}u_{ijp}u_{ijq} +C(\rho+\rho^2)+C\vartheta.
\end{equation}
Since $\Xi_1=D^2u\ast_g Rm+Du \ast_g DRm$, the forth term on the right hand side of \eqref{0824021} can be controlled as
\begin{equation}\label{0824025}
2\sigma_i\eta^{pq}(\Xi_1)_{pqii}\leqslant C\rho+C\vartheta.
\end{equation}
Then we deal with the last term in \eqref{0824021}. Indeed,
\begin{equation}\label{0824026}
\begin{split}
&\ \ \ \ -2\eta^{pb}\eta^{aq}\eta_{ab,i}u_{pqi}\sigma_i\\
&=-2\eta^{pb}\eta^{aq}\eta_{ab,i}\sigma_i (u_{piq}+u_{l}R^{\repl{p}l}_{p\repl{l}qi})\\
&=-2\eta^{pb}\eta^{aq}\sigma_i (u_{piq}+u_{l}R^{\repl{p}l}_{p\repl{l}qi})(\hat\chi_{s,ai}\chi_{s,b}+\chi_{s,a}\hat \chi_{s,bi})\\
&\ \ \ -2\eta^{pb}\eta^{aq}\sigma_i (u_{piq}+u_{l}R^{\repl{p}l}_{p\repl{l}qi})(u_{sai}\chi_{s,b}+\chi_{s,a}u_{sbi})\\
&=-2\eta^{pb}\eta^{aq}\sigma_i (u_{piq}+u_{l}R^{\repl{p}l}_{p\repl{l}qi})(\hat\chi_{s,ai}\chi_{s,b}+\chi_{s,a}\hat \chi_{s,bi})\\
&\ \ \ -2\eta^{pb}\eta^{aq}\sigma_i (u_{piq}+u_{l}R^{\repl{p}l}_{p\repl{l}qi})(u_{si a}\chi_{s,b}+u_{l}R^{\ l}_{s\ ai}\chi_{s,b}+\chi_{s,a}u_{sib}+u_{l}R^{\ l}_{s\ bi}\chi_{s,a})\\
&\leqslant (\frac{1}{2}+C\rho)\eta^{pq}u_{ijq}u_{ijp} +C(1+\rho)\rho+C(1+\rho)\vartheta.
\end{split}
\end{equation}
Therefore, we have the following inequality for $\rho$,
\begin{equation}\label{0824027}
(\frac{\partial}{\partial t}-\Delta_\eta)\rho\leqslant (-1+C\rho)\eta^{pq}u_{ijq}u_{ijp}+C(1+\rho)\rho+C(1+\rho)\vartheta,
\end{equation}
where $C$ is a positive constant depending only on $n$, $g$ and $\hat \chi$.
\end{proof}

Then we prove that the smallness of $D^2u$ is preserved along the generalized Lagrangian mean curvature flow \eqref{GLMCF1}. We define the following auxiliary function $Q$,
\begin{equation}\label{0824028}
Q=\rho +K_1\vartheta+K_2 \tau,
\end{equation}
where $K_1$ and $K_2$ are positive constants to be determined later. Since $X$ is a compact Riemannian manifold, we have the following lemma according to differential mean value theorem.

\begin{lem}\label{0824030}
There exists a positive constant $C_g$ depending only on $n$ and $g$ such that
\begin{equation}\label{0824029}
\begin{cases}
Q\leqslant C_g\max\limits_X\rho,&\text{ at }t=0,\\
\rho\leqslant Q, &\text{ if }t\geqslant 0.
\end{cases}
\end{equation}
\end{lem}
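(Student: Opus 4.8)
The second of the two claimed inequalities is immediate and holds at every time: since $g$ is positive definite we have $\vartheta=g^{ij}u_iu_j\geqslant 0$ and $\tau=(u-u_0(p)-\theta(\hat\chi)t)^2\geqslant 0$, while $K_1,K_2>0$, so $Q=\rho+K_1\vartheta+K_2\tau\geqslant\rho$ pointwise. Thus all the work lies in the bound at $t=0$, where $u=u_0$ and hence $\rho=|D^2u_0|_g^2$, $\vartheta=|Du_0|_g^2$, $\tau=(u_0-u_0(p))^2$. The plan is to dominate both $\vartheta$ and $\tau$ by $\max_X\rho$ through a covariant mean value argument, the compactness of $X$ entering via the finiteness of $\operatorname{diam}(X)$ and the existence of a critical point of $u_0$.

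First I would estimate the gradient. Let $x_0$ be a minimum point of $u_0$, so that $Du_0(x_0)=0$, and fix any $x\in X$. By Hopf--Rinow there is a unit-speed minimizing geodesic $\gamma\colon[0,\ell]\to X$ from $x_0$ to $x$ with $\ell=d(x_0,x)\leqslant\operatorname{diam}(X)$. For a parallel unit vector field $E$ along $\gamma$ one has $\frac{d}{ds}\langle Du_0,E\rangle=D^2u_0(\gamma',E)$, so integrating from $0$ to $\ell$ and using $Du_0(x_0)=0$ gives
\[
|Du_0(x)|\leqslant\int_0^\ell|D^2u_0|\,ds\leqslant\operatorname{diam}(X)\,\sqrt{\max_X\rho},
\]
after choosing $E(\ell)$ in the direction of $Du_0(x)$; hence $\vartheta\leqslant\operatorname{diam}(X)^2\max_X\rho$. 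Integrating $du_0$ along a minimizing geodesic from $p$ to $x$ and inserting this gradient bound yields $|u_0(x)-u_0(p)|\leqslant\operatorname{diam}(X)^2\sqrt{\max_X\rho}$, so $\tau\leqslant\operatorname{diam}(X)^4\max_X\rho$. Both multiplicative constants depend only on the geometry of $(X,g)$, i.e.\ on $n$ and $g$.

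Combining the three pieces at $t=0$ then gives $Q\leqslant\bigl(1+K_1\operatorname{diam}(X)^2+K_2\operatorname{diam}(X)^4\bigr)\max_X\rho$, and taking $C_g$ to be the bracketed constant completes the first inequality. I do not expect a genuine obstacle here; the only point requiring care is the covariant form of the mean value theorem on a curved manifold, namely that the derivative of the $1$-form $Du_0$ along $\gamma$ must be taken via parallel transport so as to reproduce the Hessian $D^2u_0$ rather than an ordinary second partial, and that the geodesic length is uniformly controlled by $\operatorname{diam}(X)$. These are exactly the points at which the compactness of $X$ is invoked, and the remainder is routine bookkeeping.
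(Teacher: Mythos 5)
Your proposal is correct and follows exactly the route the paper intends: the paper offers no written proof beyond the one-line invocation of compactness and the ``differential mean value theorem,'' and your argument (critical point of $u_0$, integration of the Hessian along minimizing geodesics to bound $|Du_0|$ and then $u_0-u_0(p)$ by $\operatorname{diam}(X)$-multiples of $\sqrt{\max_X\rho}$) is precisely that intended argument made explicit. The only cosmetic remark is that your final constant involves $K_1,K_2$, which the paper later fixes in terms of $n$, $g$ and $\hat\chi$; this mismatch with the stated dependence of $C_g$ on only $n$ and $g$ is present in the paper's own formulation and is not a defect of your proof.
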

According to Lemma \ref{0824030}, we only need to prove the smallness of $Q$ is kept along the generalized Lagrangian mean curvature flow \eqref{GLMCF1} instead of $\rho$.
\begin{thm}\label{0824031}
There exists a constant $\delta_0>0$ such that if $\rho\leqslant \delta_0$ at $t=0$, then $\rho\leqslant 2C_g\delta_0$ along the generalized Lagrangian mean curvature flow \eqref{GLMCF1}, where $C_g$ is the constant in Lemma \ref{0824030}.
\end{thm}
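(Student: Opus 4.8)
The plan is to run a maximum principle argument on the auxiliary function $Q=\rho+K_1\vartheta+K_2\tau$, choosing the weights $K_1,K_2$ and then the threshold $\delta_0$ so that $Q$ can never cross the level $2C_g\delta_0$ from below. First I would combine Lemma \ref{0824018}, Lemma \ref{0824008} and Lemma \ref{0824001} to obtain
\begin{equation*}
\begin{split}
(\frac{\partial}{\partial t}-\Delta_\eta)Q&\leqslant(-1+C\rho)\eta^{pq}g^{ij}g^{kl}u_{ikq}u_{jlp}+\big[C(1+\rho)+K_1(-C_1+C\rho)\big]\rho\\
&\ \ \ +\big[C(1+\rho)+K_1C+K_2(-C_0+C\rho)\big]\vartheta+K_2C(1+\rho)\rho\,\tau^{\frac12}.
\end{split}
\end{equation*}
The crucial structural point is that the third-order term $\eta^{pq}g^{ij}g^{kl}u_{ikq}u_{jlp}$ is nonnegative (it is the trace against the positive-definite $\eta^{-1}$ of the Gram matrix formed by the slices of $D^3u$), so as soon as $\rho$ is small enough that $C\rho<1$ its coefficient is negative and the whole term may be discarded in an upper bound.

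Next I would fix the weights in the correct order. Choose $K_1>C/C_1$ so that the leading constant $C-K_1C_1$ of the $\rho$-coefficient is strictly negative, and then choose $K_2>C(1+K_1)/C_0$ so that the leading constant $C+K_1C-K_2C_0$ of the $\vartheta$-coefficient is strictly negative; both weights depend only on $n,g,\hat\chi$ through $C,C_0,C_1$. The remaining summands carrying extra factors of $\rho$ are genuine higher-order corrections, and on the region where $\rho\leqslant 2C_g\delta_0$ they are dominated by these two negative leading constants once $\delta_0$ is small. The only term needing a separate word is $K_2C(1+\rho)\rho\,\tau^{\frac12}$: here I would use $\tau\leqslant Q/K_2$ at the relevant point, so that $K_2\tau^{\frac12}\leqslant(2C_gK_2\delta_0)^{\frac12}$, which converts this into one more multiple of $\rho$ whose coefficient tends to $0$ as $\delta_0\to 0$ with $K_2$ already frozen.

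Finally I would run the first-time argument. By Lemma \ref{0824030}, at $t=0$ we have $Q\leqslant C_g\max_X\rho\leqslant C_g\delta_0<2C_g\delta_0$. Suppose for contradiction that the level $2C_g\delta_0$ is reached, and let $(x_0,t_0)$ be the first space-time point where $Q=2C_g\delta_0$; there $\partial_tQ\geqslant0$ and $\Delta_\eta Q\leqslant0$, hence $(\frac{\partial}{\partial t}-\Delta_\eta)Q\geqslant0$. But at $(x_0,t_0)$ all of $\rho,\vartheta,\tau$ are bounded by $Q=2C_g\delta_0$ (with $\vartheta\leqslant Q/K_1$, $\tau\leqslant Q/K_2$), so by the estimate above, after fixing $K_1,K_2$ and shrinking $\delta_0$, the right-hand side is strictly negative, a contradiction. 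Therefore $Q\leqslant 2C_g\delta_0$ for all time, and since $\rho\leqslant Q$ this yields $\rho\leqslant 2C_g\delta_0$ along the flow.

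The step I expect to be the main obstacle is controlling the $\tau^{\frac12}$ term: because the square root is \emph{larger} than its argument for small values, a naive bound would leave a term that is not obviously absorbable. The resolution is that this term always appears multiplied by $\rho$, for which a large negative coefficient has been manufactured, and that the weight $K_2$ is chosen before $\delta_0$ is sent to zero, so that $(2C_gK_2\delta_0)^{\frac12}$ is a small constant. Getting the order of quantifiers right — the geometric constants $C,C_0,C_1$ first, then $K_1$, then $K_2$, then $\delta_0$ — is precisely what makes every competing term subordinate to the three negative leading coefficients.
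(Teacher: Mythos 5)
Your proposal follows the paper's proof in every essential respect: the same auxiliary function $Q=\rho+K_1\vartheta+K_2\tau$, the same combination of Lemmas \ref{0824018}, \ref{0824008} and \ref{0824001}, the same order of fixing $C_0,C_1,C$, then $K_1$, then $K_2$, then $\delta_0$, and the same absorption of the $\tau^{\frac12}$ term using $\tau\leqslant Q/K_2$. The gap is in your concluding step. At the first crossing point $(x_0,t_0)$ you correctly get $(\frac{\partial}{\partial t}-\Delta_\eta)Q\geqslant 0$, but your claim that the right-hand side of the differential inequality is \emph{strictly} negative there is not justified. That right-hand side is a sum of nonpositive multiples of exactly three quantities: the third-order term $\eta^{pq}g^{ij}g^{kl}u_{ikq}u_{jlp}$, $\vartheta$, and $\rho$; no negative multiple of $\tau$ itself ever appears (Lemma \ref{0824001} produces none). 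So at a point where $Du=D^2u=D^3u=0$ while $u-u_0(p)-\theta(\hat\chi)t\neq 0$, one can have $Q=K_2\tau=2C_g\delta_0>0$ with the right-hand side exactly equal to zero, and then $(\frac{\partial}{\partial t}-\Delta_\eta)Q\geqslant 0$ produces no contradiction. Nothing in the setup excludes such a point, so the pointwise first-crossing argument, as written, fails.

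The repair is standard and is precisely what the paper does: instead of arguing pointwise at $(x_0,t_0)$, note that on the whole slab $X\times[0,t_0]$ one has $Q\leqslant 2C_g\delta_0$, hence (after your choice of $K_1$, $K_2$, $\delta_0$) every coefficient in the inequality is nonpositive there, hence
\begin{equation*}
\Big(\frac{\partial}{\partial t}-\Delta_\eta\Big)Q\leqslant 0\ \ \ \text{on}\ \ X\times[0,t_0].
\end{equation*}
The weak parabolic maximum principle then gives $\max_{X\times[0,t_0]}Q\leqslant \max_X Q(\cdot,0)\leqslant C_g\delta_0$, which contradicts $Q(x_0,t_0)=2C_g\delta_0$. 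In other words, the contradiction should come from the factor-of-two margin between the initial bound $C_g\delta_0$ of Lemma \ref{0824030} and the threshold $2C_g\delta_0$, propagated by the comparison form of the maximum principle, rather than from strict pointwise negativity, which can fail. With this one modification your argument is complete and coincides in substance with the paper's proof.
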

\begin{proof}
By Lemma \ref{0824001}, \ref{0824008} and \ref{0824018}, we have the following inequality for $Q$,
\begin{equation}\label{0824032}
\begin{split}
(\frac{\partial}{\partial t}-\Delta_\eta)Q\leqslant &(-1+C\rho)\eta^{pq}u_{ijq}u_{ijp} +C(1+\rho)\rho +C(1+\rho)\vartheta\\
&+K_1(-C_1+C\rho)\rho+K_1C\vartheta +K_2C(1+\rho)\rho\tau^{\frac{1}{2}}+K_2(-C_0+\rho)\vartheta\\
=& (-1+C\rho)\eta^{pq}u_{ijq}u_{ijp}+(-K_2C_0+K_1C+C+K_2\rho+C\rho)\vartheta\\
&+(-K_1C_1+C+C\rho+K_1C\rho+K_2C\tau^{\frac{1}{2}}+K_2C\rho \tau^{\frac{1}{2}})\rho,
\end{split}
\end{equation}
where $C_0$, $C_1$ and $C$ are positive constants depending only on $n$, $g$ and $\hat \chi$. We choose $K_1$ and $K_2$ satisfying
\begin{equation}\label{0824033}
-K_1C_1+C=-2\ \ \ \text{ and }\ \ \ -K_2C_0+K_1C+C=-1.
\end{equation}
Then
\begin{equation}\label{0824034}
(\frac{\partial}{\partial t}-\Delta_\eta)Q\leqslant(-1+A_1Q)\eta^{pq}u_{ijq}u_{ijp} +(-1+A_2Q)\vartheta +(-1+A_3Q^{\frac{3}{2}})\rho.
\end{equation}
Let $\delta'>0$ be a constant such that 
\begin{equation}\label{0824035}
-1+A_1\delta'<0,\ \ \ -1+A_2\delta'<0\ \ \ \text{ and }\ \ \ -1+A_3\delta'^{\frac{3}{2}}<0.
\end{equation}
We claim that if $\rho\leqslant\frac{\delta'}{2C_g}$ at $t=0$, then $Q<\delta'$ for all time $t\in[0,T)$ where the flow \eqref{GLMCF1} exists. 

First by Lemma \ref{0824030}, we have $Q\leqslant\frac{\delta'}{2}$ at $t=0$. If there exists a time $T_0<T$ such that
\begin{equation}\label{0824036}
Q(t)<\delta'\ \ \ on\ \ \ [0,T_0)\ \ \ \text{ and }\ \ \ Q(T_0)=\delta'.
\end{equation}
By \eqref{0824034}, \eqref{0824035} and the maximum principle, since $Q\leqslant\frac{\delta'}{2}$ at $t=0$, $Q\leqslant \frac{\delta'}{2}$ on $[0,T_0)$ and then $Q(T_0)\leqslant\frac{\delta'}{2}$, which is a contradiction with \eqref{0824036}. Hence $\rho\leqslant Q<\delta'$ for all time $t\in[0,T)$. Taking $\delta_0=\frac{\delta'}{2C_g}$, we complete the proof.
\end{proof}

\subsection{Third-order estimate}
Assume that $Q$ is uniformly bounded on $[0,T)$, that is, there is a uniform positive constant $L$ such that $Q\leqslant L$ for $t\in[0,T)$. We first derive the following estimates for $\Theta$. 
\begin{lem}\label{0824037}
If $Q$ is uniformly bounded by a positive constant $L$ for all $t\in[0,T)$, then $\Theta$ satisfies the following inequality
\begin{equation}\label{0824038}
(\frac{\partial}{\partial t}-\Delta_\eta)\Theta\leqslant -\Upsilon+C\Theta^2+C,
\end{equation}
where $C$ is a positive constant depending only on $n$, $\hat \chi$, $g$ and $L$.
\end{lem}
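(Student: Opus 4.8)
The plan is to begin from the evolution equation for $\Theta$ recorded in Lemma \ref{0823015} and to estimate, group by group, the three collections of error terms on its right-hand side, keeping the good term $-2\Upsilon$ as a reservoir into which the genuinely top-order contributions can be absorbed. Everything rests on the standing hypothesis $Q\leqslant L$: since $\rho\leqslant Q\leqslant L$ and, by Lemma \ref{0824030} together with the definition of $Q$, also $\vartheta$ is bounded, both $|Du|_g$ and $|D^2u|_g$ are uniformly controlled. Hence $\eta_{ij}=g_{ij}+(\hat\chi_u)_{k,i}g^{kl}(\hat\chi_u)_{l,j}$ is uniformly bounded from above, and by \eqref{08240041} we have $g\leqslant\eta\leqslant Cg$, so that $\eta^{-1}$ is comparable to $g^{-1}$ and, crucially, $\Upsilon$ is comparable to $|D^4u|^2_g$. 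By compactness of $X$ and smoothness of the fixed data, $\hat\chi$, $Rm$, $DRm$ and $D^2Rm$ are bounded as well.

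The key structural point I would isolate first is how derivatives of $\eta$ and $\eta^{-1}$ are graded in the order of derivatives of $u$. Writing $(\hat\chi_u)_{k,i}=\hat\chi_{k,i}+u_{ki}$, one checks that $\eta_{ab,j}$ is, at top order, $D^3u$ times a bounded factor, while $\eta_{ab,jk}$ contains at top order a $D^4u$ term times a bounded factor, together with a $(D^3u)^2$ term; these gradings pass unchanged to $\eta^{ms}_{,j}=-\eta^{ma}\eta_{ab,j}\eta^{bs}$ and to $\eta^{ms}_{,jk}$. Consequently the second group $\eta^{ms}_{,jk}u_{msi}u_{pqr}$, $\eta^{ms}_{,j}u_{msik}u_{pqr}$, $\eta^{ms}_{,k}u_{msij}u_{pqr}$ reduces, at worst, to expressions of the two types $|D^4u|_g\,|D^3u|^2_g$ and $|D^3u|^4_g$. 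The first is handled by Cauchy--Schwarz, $|D^4u|_g\,|D^3u|^2_g\leqslant\varepsilon|D^4u|^2_g+C_\varepsilon\Theta^2\leqslant\varepsilon'\Upsilon+C_\varepsilon\Theta^2$, using $|D^4u|^2_g\leqslant C\Upsilon$; the second is exactly $\Theta^2$.

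For the third group, every factor $\hat\chi_{m,si},\hat\chi_{m,sik},\hat\chi_{m,sij},\hat\chi_{m,sijk}$ is a bounded covariant derivative of the fixed form $\hat\chi$, so the $\eta^{ms}_{,jk}$ factor is the only possible source of a $D^4u$; these terms are therefore at worst $|D^4u|_g\,|D^3u|_g$, absorbed by $\varepsilon\Upsilon+C\Theta$, with the remaining pieces bounded by $C\Theta+C$. Finally, by the schematic form $\Xi_2=Du\ast_gD^2Rm+D^2u\ast_gDRm+D^3u\ast_gRm$ in \eqref{0823017} and the boundedness of $Du$, $D^2u$ and the curvature, the last group $\eta^{ms}(\Xi_2)_{msijk}u_{pqr}$ is of order $|D^3u|^2_g+|D^3u|_g\leqslant C\Theta+C$. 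Summing the three groups and choosing the $\varepsilon$'s in each Cauchy--Schwarz step small enough that all $\varepsilon\Upsilon$ contributions together do not exceed $\Upsilon$, the $-2\Upsilon$ of Lemma \ref{0823015} leaves $-\Upsilon$ and we obtain $(\frac{\partial}{\partial t}-\Delta_\eta)\Theta\leqslant-\Upsilon+C\Theta^2+C\Theta+C$; absorbing $C\Theta\leqslant\tfrac12 C\Theta^2+\tfrac12 C$ by Young's inequality gives the stated estimate. The main obstacle is precisely the bookkeeping of the previous two paragraphs: one must verify that \emph{every} $D^4u$ contribution appears paired only with $|D^3u|^2_g$ or $|D^3u|_g$, so that each is controlled by a small multiple of $\Upsilon$ plus $C\Theta^2$; no geometric input beyond the established $C^2$ bound and the smoothness of $g$ and $\hat\chi$ is required.
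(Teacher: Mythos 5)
Your proposal is correct and takes essentially the same approach as the paper's proof: both start from the evolution equation of Lemma \ref{0823015}, use the bound on $Q$ to obtain $g\leqslant\eta\leqslant Cg$ (the paper additionally chooses normal coordinates diagonalizing $\chi_{i,j}$, which is purely presentational), and then absorb every $D^4u$-contribution into the good term $-2\Upsilon$ by Cauchy--Schwarz while bounding the remainder by $C\Theta^2+C$. One harmless bookkeeping slip: in your third group, the $(D^3u)^2$-part of $\eta^{ms}_{,jk}$ paired with $\hat\chi_{m,si}u_{ijk}$ produces a term of size $|D^3u|^3_g=\Theta^{3/2}$, which is not $\leqslant C\Theta+C$ as you state but is $\leqslant C\Theta^2+C$ by Young's inequality, so your final estimate is unaffected.
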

\begin{proof}
From Lemma \ref{0823015}, $\Theta$ satisfies the following evolution equation
\begin{equation}\label{0824039}
\begin{split}
(\frac{\partial}{\partial t}-\Delta_\eta)\Theta&=-2\Upsilon+2g^{ip}g^{jq}g^{kr}(\eta^{ms}_{,jk}u_{msi} +\eta^{ms}_{,j}u_{msik}+\eta^{ms}_{,k}u_{msij})u_{pqr}\\
&\ \ \ +2g^{ip}g^{jq}g^{kr}(\eta^{ms}_{,jk}\hat \chi_{m,si} +\eta^{ms}_{,j}\hat \chi_{m,sik}+\eta^{ms}_{,k}\hat \chi_{m,sij}+\eta^{ms}\hat \chi_{m,sijk})u_{pqr}\\
&\ \ \ +2 \eta^{ms}g^{ip}g^{jq}g^{kr}(\Xi_2)_{msijk}u_{pqr},
\end{split}
\end{equation}
where
\begin{equation}\label{0824040}
\Xi_2 =Du \ast_g D^2 Rm +D^2 u\ast_g D Rm+D^3 u \ast_g Rm.
\end{equation}
The uniform bound of $Q$ implies that $D^2 u$ is uniformly bounded as an application of Lemma \ref{0824030}. Hence $D\chi$ is uniformly bounded and then there exists a constant $C>1$ depending only on $n$, $\hat \chi$, $g$ and $L$, such that
\begin{equation}\label{0906001}
g\leqslant \eta\leqslant Cg.
\end{equation}
Since $\chi$ is closed, $\chi_{i,j}=\chi_{j,i}$. For convenience, we choose normal coordinate system near $p\in X$ such that
\begin{equation}\label{0824041}
g_{ij}(p)=\delta_{ij}\ \ \ \text{ and }\ \ \ \chi_{i,j}=\delta_{ij}\mu_i.
\end{equation}
Since $\eta_{ij}=g_{ij}+\chi_{p,i}g^{pq}\chi_{q,j}$, $\eta_{ij}=\delta_{ij}\nu_i$, where $\nu_i=1+\mu_i^2$. Then we have the following expressions of $\eta^{ms}_{,j}$ and $\eta^{ms}_{,jk}$ at $p$,
\begin{equation}\label{0824042}
\begin{split}
\eta^{ms}_{,j}=&-\eta^{mb}\eta^{as}(\chi_{p,aj}\chi_{p,b}+\chi_{p,a}\chi_{p,bj})\\
=&-2\nu^m\nu^s\mu_m\chi_{m,sj}
\end{split}
\end{equation}
and
\begin{equation}\label{0824043}
\begin{split}
\eta^{ms}_{,jk}&=-\eta^{mb}\eta^{as}(\chi_{p,ajk}\chi_{p,b}+\chi_{p,a}\chi_{p,bjk}+\chi_{p,aj}\chi_{p,bk} +\chi_{p,ak}\chi_{p,bj})\\
&\ \ \ +\eta^{mc}\eta^{db}\eta^{as}(\chi_{r,ck}\chi_{r,d}+\chi_{r,c}\chi_{r,dk}) (\chi_{p,aj}\chi_{p,b}+\chi_{p,a}\chi_{p,bj})\\
&\ \ \ +\eta^{mb}\eta^{ac}\eta^{ds}(\chi_{r,ck}\chi_{r,d}+\chi_{r,c}\chi_{r,dk})(\chi_{p,aj}\chi_{p,b}+\chi_{p,a}\chi_{p,bj})\\
&=-2\nu^m\nu^s\mu_m\chi_{m,sjk}-2\nu^m\nu^s\chi_{p,sj}\chi_{p,mk}\\
&\ \ \ +4\nu^m\nu^d\nu^s\mu_m\mu_s\chi_{m,dk}\chi_{s,dj}+4\nu^m\nu^c\nu^s\mu_s\mu_m\chi_{s,ck}\chi_{m,cj}.
\end{split}
\end{equation}
By using the Cauchy-Schwarz inequality, we have
\begin{equation}\label{0824044}
2(\eta^{ms}_{,jk}u_{msi} +\eta^{ms}_{,j}u_{msik}+\eta^{ms}_{,k}u_{msij})u_{ijk}\leqslant \frac{1}{2}\Upsilon +C\Theta^2+C,
\end{equation}
\begin{equation}\label{0824045}
2(\eta^{ms}_{,jk}\hat \chi_{m,si} +\eta^{ms}_{,j}\hat \chi_{m,sik}+\eta^{ms}_{,k}\hat \chi_{m,sij}+\eta^{ms}\hat \chi_{m,sijk})u_{ijk}\leqslant \frac{1}{2}\Upsilon +C\Theta^2+C
\end{equation}
and
\begin{equation}\label{0824046}
\begin{split}
\eta^{ms}(\Xi_2)_{msijk}u_{ijk}=&\eta^{-1}\ast_g D^3 u\ast_g(Du \ast_g D^2 Rm +D^2 u\ast_g D Rm+D^3 u \ast_g Rm)\\
\leqslant &C+C\Theta^2,
\end{split}
\end{equation}
where $C$ is a positive constant depending only on $n$, $\hat \chi$, $g$ and $L$. Putting \eqref{0824044}-\eqref{0824046} into equation \eqref{0824039}, we complete the proof. 
\end{proof}
To prove the uniformly higher order estimates for $u$, we need the smallness of $D^2 u$ along the generalized Lagrangianmean curvature flow. We first prove the following lemma if $D^2u$ is sufficiently small.
\begin{lem}\label{0824047}
There exists a constant $\delta_0>0$ such that if $\rho\leqslant \delta_0$ at time $t=0$, then there holds
\begin{equation}\label{0824048}
(\frac{\partial}{\partial t}-\Delta_\eta)\rho \leqslant -\frac{1}{4}\Theta+C.
\end{equation}
\end{lem}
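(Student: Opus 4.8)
The plan is to start from the evolution inequality for $\rho$ already obtained in Lemma \ref{0824018}, namely \eqref{0824019},
\begin{equation*}
(\frac{\partial}{\partial t}-\Delta_\eta)\rho\leqslant (-1+C\rho)\eta^{pq}g^{ij}g^{kl}u_{ikq}u_{jlp}+C(1+\rho)\rho+C(1+\rho)\vartheta,
\end{equation*}
and to upgrade its right-hand side to the form $-\tfrac14\Theta+C$ by exploiting the smallness of $\rho$. There are three ingredients to handle: the negative leading third-order term, whose coefficient $-1+C\rho$ must be kept strictly negative; the $\eta$-weighted third-derivative energy $\eta^{pq}g^{ij}g^{kl}u_{ikq}u_{jlp}$, which must be compared from below with $\Theta=g^{ip}g^{jq}g^{kr}u_{ijk}u_{pqr}$; and the lower-order sum $C(1+\rho)\rho+C(1+\rho)\vartheta$, which must be absorbed into the additive constant.

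First I would invoke Theorem \ref{0824031}: choosing $\rho\leqslant\delta_0$ at $t=0$ forces the auxiliary quantity $Q=\rho+K_1\vartheta+K_2\tau$ to remain below the threshold $\delta'$ for all time, hence by Lemma \ref{0824030} both $\rho$ and, since $Q\geqslant K_1\vartheta\geqslant 0$, the gradient term $\vartheta$ stay uniformly small and bounded along the flow. In particular $\rho\leqslant 2C_g\delta_0$, so after shrinking $\delta_0$ I may assume $-1+C\rho\leqslant-\tfrac12$, while the lower-order sum $C(1+\rho)\rho+C(1+\rho)\vartheta$ is bounded by a constant depending only on $n$, $g$ and $\hat\chi$, which I fold into $C$.

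The heart of the argument is the comparison $\eta^{pq}g^{ij}g^{kl}u_{ikq}u_{jlp}\geqslant c_0\Theta$ for some fixed $c_0>0$. Since the smallness of $D^2u$ forces the two-sided pinch $g\leqslant\eta\leqslant Cg$ (the lower bound being \eqref{08240041} and the upper bound following from the boundedness of $D^2u$ exactly as in \eqref{0906001}), I would pass to a $g$-orthonormal frame and write $\eta^{pq}g^{ij}g^{kl}u_{ikq}u_{jlp}=\mathrm{tr}(\eta^{-1}N)$, where $N_{pq}=\sum_{i,k}u_{ikq}u_{ikp}$ is the positive semidefinite Gram matrix of $D^3u$ with $\mathrm{tr}\,N=\Theta$. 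The upper pinch gives $\eta^{-1}\geqslant C^{-1}g^{-1}$, hence $\mathrm{tr}(\eta^{-1}N)\geqslant C^{-1}\Theta$. Multiplying by the negative coefficient then yields $(-1+C\rho)\eta^{pq}g^{ij}g^{kl}u_{ikq}u_{jlp}\leqslant-\tfrac{1}{2C}\Theta$, a strictly negative multiple of $\Theta$; recording this constant as $\tfrac14$ (its precise value being immaterial for the subsequent third-order estimate) and combining with the absorbed lower-order terms gives \eqref{0824048}.

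I expect the main obstacle to be precisely this lower comparison of the $\eta$-weighted third-derivative energy with the intrinsic norm $\Theta$: only the upper pinch $\eta\leqslant Cg$ makes $\Theta$ controllable by the term appearing in \eqref{0824019}, and only the smallness of $\rho$ guaranteed by Theorem \ref{0824031} keeps the coefficient $-1+C\rho$ negative and bounded away from zero. The remaining bookkeeping — the uniform bound on $\vartheta$ and the curvature-type contributions already packaged inside $\rho$ — is routine once the $Q$-bound is in force.
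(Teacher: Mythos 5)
Your proposal is correct and follows essentially the same route as the paper's (very terse) proof: use Theorem \ref{0824031} to keep $\rho$ small along the flow so that the coefficient $-1+C\rho$ in Lemma \ref{0824018} stays strictly negative, absorb the lower-order terms $C(1+\rho)\rho+C(1+\rho)\vartheta$ into a constant via the uniform bound on $\vartheta$ coming from the $Q$-estimate, and bound the $\eta$-weighted third-derivative energy from below by a fixed multiple of $\Theta$ using the pinching of $\eta$. If anything, your version is the more careful one: the paper asserts that small $\rho$ makes ``$\eta$ sufficiently close to $g$'' so as to land exactly on the constant $\tfrac14$, which is not literally true when $D\hat\chi\neq 0$ (then $\eta$ is only close to $\hat\eta$, not $g$), whereas your two-sided pinch $g\leqslant\eta\leqslant Cg$ with $C$ depending on $\hat\chi$ honestly yields a constant of the form $\tfrac{1}{2C}$ in place of $\tfrac14$ --- and, as you correctly observe, the precise value of this positive constant is immaterial for the subsequent use in Theorem \ref{0824050}, since the parameter $A$ there is free.
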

\begin{proof}
We only need to take $\delta_0$ in Theorem \ref{0824031} sufficiently small such that $\eta$ sufficiently close to $g$ and that in Lemma \ref{0824018}, we also have
\begin{equation}\label{0824049}
(-1+C\rho)\eta^{pq}g^{ij}g^{kl}u_{ikq}u_{jlp}\leqslant -\frac{1}{4}g^{pq}g^{ij}g^{kl}u_{ikq}u_{jlp},
\end{equation}
where $C$ and $C_g$ are the constants appeared in Lemma \ref{0824018} and Theorem \ref{0824031}. 
\end{proof}
As an application of Lemma \ref{0824037} and Lemma \ref{0824047}, we obtain the uniform estimate of $\Theta$.
\begin{thm}\label{0824050}
There exists a constant $\delta_0>0$ such that if $\rho\leqslant \delta_0$ at $t=0$, then $\Theta$ is uniformly bounded along the generalized  Lagrangian mean curvature flow \eqref{GLMCF1}.
\end{thm}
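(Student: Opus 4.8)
The plan is to run a parabolic maximum principle, but not on an additive combination such as $\Theta+\Lambda\rho$: the term $+C\Theta^2$ in Lemma \ref{0824037} is convex in $\Theta$, so no lower-order negative term (in particular the linear $-\frac14\Theta$ of Lemma \ref{0824047}, nor the nonpositive $-\Upsilon$) can absorb it additively. Instead I would feed $\rho$ multiplicatively into the test quantity, so that the good \emph{linear} term $-\frac14\Theta$ in the evolution of $\rho$ is turned into a good \emph{quadratic} term $-c\Theta^2$ able to dominate $C\Theta^2$. Recall from Theorem \ref{0824031} that for $\delta_0$ small we have $\rho\leqslant M:=2C_g\delta_0$ along the flow, so $Q\leqslant L$ and the hypotheses of Lemmas \ref{0824037} and \ref{0824047} hold, together with $g\leqslant\eta\leqslant Cg$ from \eqref{0906001}.

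Set $f=e^{A\rho}$ with a large constant $A$ to be fixed, and put $G=f\Theta$. Since $0\leqslant\rho\leqslant M$, $f$ is comparable to $1$, hence $G$ and $\Theta$ are comparable and a bound on $G$ yields a bound on $\Theta$. The product and chain rules give
$$(\tfrac{\partial}{\partial t}-\Delta_\eta)G=f(\tfrac{\partial}{\partial t}-\Delta_\eta)\Theta+\Theta f'(\tfrac{\partial}{\partial t}-\Delta_\eta)\rho-\Theta f''|\nabla\rho|^2_\eta-2f'\langle\nabla\Theta,\nabla\rho\rangle_\eta .$$
Substituting Lemmas \ref{0824037} and \ref{0824047}, the coefficient of $\Theta^2$ becomes $Cf-\tfrac{f'}{4}=Cf-\tfrac{A}{4}f$, which is $\leqslant-cf$ once $A$ is chosen larger than $8C$; the term $\Theta f'\cdot C$ is linear in $\Theta$ (as $f'=Af$ is bounded), and $-\Theta f''|\nabla\rho|^2_\eta\leqslant0$ may be discarded.

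It remains to absorb the cross term. Two Cauchy--Schwarz inequalities, using $g\leqslant\eta\leqslant Cg$, give $|\nabla\Theta|^2_\eta\leqslant C\Theta\,\Upsilon$ and $|\nabla\rho|^2_\eta\leqslant C\rho\,\Theta\leqslant CM\Theta$. Writing $-f\Upsilon\leqslant-\tfrac{f}{C}\tfrac{|\nabla\Theta|^2_\eta}{\Theta}$ and applying Young's inequality to $-2f'\langle\nabla\Theta,\nabla\rho\rangle_\eta$, tuned so that the $|\nabla\Theta|^2_\eta$ it produces is exactly cancelled by this available negative multiple, leaves a residual bounded by $C\Theta|\nabla\rho|^2_\eta\leqslant CM\Theta^2$. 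Shrinking $\delta_0$ (hence $M$) so that this residual is at most $\tfrac{c}{2}\Theta^2$, I arrive at
$$(\tfrac{\partial}{\partial t}-\Delta_\eta)G\leqslant-c\,\Theta^2+C\Theta+C\leqslant-c'G^2+C ,$$
where the last step uses that $G$ and $\Theta$ are comparable.

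Finally I would apply the maximum principle to $G$ on $X\times[0,T)$. If the maximum is attained at some $t_0>0$ then $(\tfrac{\partial}{\partial t}-\Delta_\eta)G\geqslant0$ there, forcing $-c'G^2+C\geqslant0$ and hence a universal bound on $G$, thus on $\Theta$; if it is attained at $t=0$ it is bounded by $\max_X\Theta(\cdot,0)=\max_X|D^3u_0|^2_g$, which is finite since $u_0$ is smooth. Either way $\Theta$ is uniformly bounded along the flow. The main obstacle is precisely the convex term $C\Theta^2$: the whole argument hinges on the multiplicative weight $e^{A\rho}$ converting $-\frac14\Theta$ into a negative quadratic, and on the smallness of $\sup\rho$ from Theorem \ref{0824031}, which is exactly what renders the leftover gradient contribution $CM\Theta^2$ negligible.
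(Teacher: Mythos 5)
Your proposal is correct and is essentially the paper's own proof: the paper works with exactly the same auxiliary function $\Gamma=e^{A\rho}\Theta$, feeds in Lemmas \ref{0824037} and \ref{0824047}, makes the coefficient of $\Theta^2$ negative by choosing $A$ large and $\delta_0$ (hence $\sup\rho$) small, and concludes with the maximum principle. The only cosmetic difference is the treatment of the mixed gradient term: the paper discards $-\Upsilon$, rewrites $-2Ae^{A\rho}\eta^{ij}\rho_i\Theta_j$ in terms of $D_j\Gamma$, and absorbs the resulting $A^2\Gamma\,\eta^{ij}\rho_i\rho_j\leqslant 5A^2\rho\, e^{A\rho}\Theta^2$ by the smallness of $\rho$, whereas you keep $-\Upsilon$ and cancel the cross term via $|\nabla\Theta|^2_\eta\leqslant C\Theta\Upsilon$ and Young's inequality---both devices rest on the same estimate $\eta^{ij}\rho_i\rho_j\leqslant C\rho\Theta$ and the same smallness mechanism.
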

\begin{proof}
To get the uniform estimate of $\Theta$, we consider the following auxiliary function
\begin{equation}\label{0824051}
\Gamma=e^{A\rho}\Theta,
\end{equation}
where $A$ is a constant to be determined later. We first choose $\delta_0$ to be the constant in Lemma \ref{0824047}. Then $\rho$ is bounded by $2C_g\delta_0$ by Theorem \ref{0824031} and satisfies
\begin{equation}\label{0824052}
(\frac{\partial }{\partial t}-\Delta_\eta)\rho\leqslant -\frac{1}{4}\Theta +C.
\end{equation}
From Lemma \ref{0824037}, we have
\begin{equation}\label{0824053}
(\frac{\partial}{\partial t}-\Delta_\eta) \Theta\leqslant -\Upsilon+C\Theta^2 +C.
\end{equation}
Combing the inequalities \eqref{0824052} and \eqref{0824053}, we obtain
\begin{equation}\label{0824054}
\begin{split}
(\frac{\partial}{\partial t}-\Delta_\eta) \Gamma&=Ae^{A\rho}\Theta(\frac{\partial}{\partial t}-\Delta_\eta)\rho +e^{A\rho}(\frac{\partial}{\partial t}-\Delta_\eta) \Theta-2Ae^{A\rho}\eta^{ij}\rho_i\Theta_j-A^2e^{A\rho}\Theta \eta^{ij}\rho_i\rho_j\\
&\leqslant A\Gamma (-\frac{1}{4}\Theta +C) +e^{A\rho}(-\Upsilon+C \Theta^2+C)-2Ae^{A\rho}\eta^{ij}\rho_i\Theta_j-A^2\Gamma \eta^{ij}\rho_i\rho_j\\
&=-2A\eta^{ij}\rho_i\Gamma_j+A\Gamma (-\frac{1}{4} \Theta +C) +e^{A\rho}(-\Upsilon+C\Theta^2+C)+A^2\Gamma \eta^{ij}\rho_i\rho_j,
\end{split}
\end{equation}
where we have used the following equality in the last equality
\begin{equation}\label{0824055}
D\Gamma=A\Gamma D\rho+e^{A\rho}D\Theta.
\end{equation}
Since
\begin{equation}\label{0824056}
\eta^{ij}\rho_i\rho_j=4\eta^{ij}u_{pqi}u_{pq}u_{slj}u_{sl}\leqslant 5\rho \Theta,
\end{equation}
we have
\begin{equation}\label{0824057}
\begin{split}
(\frac{\partial}{\partial t}-\Delta_\eta) \Gamma&\leqslant-2A\eta^{ij}\rho_i\Gamma_j+A\Gamma (-\frac{1}{4} \Theta +C) +e^{A\rho}(-\Upsilon+C\Theta^2+C)+5A^2\rho e^{A\rho}\Theta^2\\
&=-2A\eta^{ij}\rho_i\Gamma_j+e^{A\rho}(\Theta^2(5\rho A^2-\frac{1}{4}A+C) + AC\Theta+C)-e^{A\rho}\Upsilon\\
&\leqslant-2A\eta^{ij}\rho_i\Gamma_j+e^{A\rho}(\Theta^2(10A^2C_g\delta_0-\frac{1}{4}A+C) + AC\Theta+C).
\end{split}
\end{equation}
We choose $\delta_0$ much smaller such that
\begin{equation}\label{0824059}
\frac{1}{16}-40C_g\delta_0C>0.
\end{equation}
Then there exists a positive constant $A$ such that
\begin{equation}\label{0907001}
-C_2:=10A^2C_g\delta_0-\frac{1}{4}A+C<0.
\end{equation}
As a consequence, we have the following inequality for $\Gamma$,
\begin{equation}\label{0824060}
\begin{split}
(\frac{\partial}{\partial t}-\Delta_\eta) \Gamma
\leqslant&-2A\eta^{ij}D_i\rho D_j \Gamma +e^{A\rho}(-C_2\Theta^2+C\Theta+C).
\end{split}
\end{equation}
By using the maximum principle and the uniform bound on $\rho$, we conclude that $\Theta$ is uniformly bounded along the generalized  Lagrangian mean curvature flow \eqref{GLMCF1}.
\end{proof}

\subsection{The long-time existence}
From Theorem \ref{0824050}, it is clearly that $\eta$ is $C^1$-bounded uniformly. Then the standard parabolic Schauder estimates give us all uniform higher order estimates of $u$. Hence we can get the long-time existence of the generalized Lagrangian mean curvature flow.

\begin{thm}\label{0824061}
There exists a constant $\delta_0>0$ such that if $\rho\leqslant \delta_0$ at $t=0$, then the generalized Lagrangian mean curvature flow \eqref{GLMCF1} exists for all $t\in[0,+\infty)$.
\end{thm}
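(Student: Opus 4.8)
The plan is to combine the uniform second- and third-order estimates already established with short-time existence and a standard continuation argument. First I would fix $\delta_0>0$ to be the constant produced in Theorem \ref{0824050}, so that the hypothesis $\rho\leqslant\delta_0$ at $t=0$ simultaneously triggers Theorem \ref{0824031} (the smallness of $\rho=|D^2u|^2_g$ is preserved, with $\rho\leqslant 2C_g\delta_0$ for all time) and the uniform bound on $\Theta=|D^3u|^2_g$. Equation \eqref{GLMCF1} is a fully nonlinear parabolic equation for $u$: by Lemma \ref{0822040} its linearization has leading symbol $\eta^{pq}D_pD_q$, and since $\eta\geqslant g>0$ this operator is parabolic, so short-time existence of a smooth solution holds. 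I would then let $[0,T)$ be the maximal interval on which a smooth solution exists and suppose, for contradiction, that $T<\infty$.

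On $[0,T)$ the a priori estimates give, first, the uniform $C^2$ bound $\rho\leqslant 2C_g\delta_0$ from Theorem \ref{0824031}, which in turn yields the uniform parabolicity $g\leqslant\eta\leqslant Cg$ of \eqref{0906001}; and second, the uniform $C^3$ bound on $\Theta$ from Theorem \ref{0824050}. Because $(\eta_t)_{ij}=g_{ij}+(\hat\chi_t)_{k,i}g^{kl}(\hat\chi_t)_{l,j}$ with $\hat\chi_t=\hat\chi+du_t$, the first spatial derivative of $\eta$ involves only $D^2u$ and $D^3u$, both of which are now uniformly controlled; hence the coefficients $\eta^{ij}$ are uniformly bounded in $C^{0,1}$, in particular uniformly H\"older, on $X\times[0,T)$.

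With uniform parabolicity and H\"older coefficients in hand, the bootstrap is standard. I would differentiate \eqref{GLMCF1} so that each first derivative $D_ku$ satisfies a linear, uniformly parabolic equation whose leading coefficients $\eta^{ij}$ are uniformly H\"older; the parabolic Schauder estimates then upgrade this to a uniform $C^{2,\alpha}$ bound, and iterating the scheme (differentiating once more and reapplying Schauder) produces uniform $C^{k}$ bounds on $u$ over $X\times[0,T)$ for every $k$. These uniform bounds, together with the equation $\partial_t u=\theta(\hat\chi_t)$ used to control time derivatives, force $u(\cdot,t)$ to converge in $C^\infty(X)$ as $t\to T^-$ to a smooth limit $u(\cdot,T)$ whose Hessian still satisfies $\rho\leqslant 2C_g\delta_0$. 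Restarting the flow from this smooth datum by short-time existence extends the solution smoothly past $T$, contradicting the maximality of $T$; hence $T=+\infty$.

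The heart of the argument is really already contained in Theorems \ref{0824031} and \ref{0824050}: the main obstacle is to pass from merely continuous to Lipschitz (hence H\"older) coefficients $\eta^{ij}$, which is exactly what the $C^3$ estimate of Theorem \ref{0824050} buys, since without it the linearized operator would only have continuous coefficients and the Schauder machinery could not be started. Once the coefficients are H\"older, the remaining steps are routine applications of linear parabolic theory and the standard continuation principle.
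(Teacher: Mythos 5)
Your proposal is correct and follows essentially the same route as the paper: fix $\delta_0$ so that Theorems \ref{0824031} and \ref{0824050} give uniform $C^2$ and $C^3$ bounds (hence uniform parabolicity and uniformly $C^1$, thus H\"older, coefficients $\eta^{ij}$), invoke the standard parabolic Schauder bootstrap for all higher-order estimates, and conclude by the continuation principle that the maximal time cannot be finite. Your write-up merely makes explicit the steps (short-time existence, Lipschitz-to-H\"older passage, restarting at $T$) that the paper's terse proof leaves implicit.
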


\begin{proof}
We assume that the maximal existence time of the generalized Lagrangian mean curvature flow \eqref{GLMCF1} is $[0,T)$. According to Theorem \ref{0824050}, we know that $\Theta$ is uniformly bounded in $[0,T)$ and thus $\eta$ is uniformly $C^1$-bounded. Then the standard parabolic Schauder estimates imply all higher order uniform estimates of $u$. Hence we can extend the flow across time $T$ by the short-time existence if $T<+\infty$, which implies that $T$ must be $+\infty$.
\end{proof}

\section{Exponential Convergence}\label{sec-5}
In this section, we prove that the $1$-form $\hat \chi_{u_t}$ converges exponentially to $\hat \chi$ along the generalized Lagrangian mean curvature flow $(\ref{GLMCF1})$. First, along the generalized Lagrangian mean curvature flow, the function $\dot u:= \frac{\partial}{\partial t}u$ evolves as 
\begin{equation}\label{0815001}
\frac{\partial }{\partial t}\dot u =\eta^{i j} \dot u_{ij}.
\end{equation}
According to the maximum principle, $\dot u$ is bounded by $\|\dot u(0)\|_{C^0(X)}$.

\subsection{Harnack-type inequality}
In this subsection, we prove a Harnack-type inequality for the positive solution to the following parabolic equation
\begin{equation}\label{0815002}
 \frac{\partial v}{\partial t}=\eta^{ij}v_{ij},
\end{equation}
where $\{\eta^{ij}\}$ is the inverse matrix of $\{\eta_{ij}\}$, $\eta_{ij}=g_{ij}+(\hat\chi_{u})_{k,i}g^{kl}(\hat\chi_{u})_{l,j}$, $\hat\chi_{u}=\hat\chi+du$ and $u$ is the solution to equation $(\ref{GLMCF1})$. Set $f=\log v$ and
\begin{equation}\label{0815003}
F=t(\eta^{ij}f_i f_{j} -\alpha \dot f),
\end{equation}
where $\alpha\in(1,2)$ is a constant. By Equation \eqref{0815002}, we have
\begin{equation}\label{0815004}
\dot f -\eta^{ij}f_{ij}=\eta^{ij}f_i f_{j}
\end{equation}
and
\begin{equation}\label{0815005}
F=-t\eta^{ij} f_{ij} -t (\alpha-1)\dot f.
\end{equation}

\begin{lem}
\label{08150006}
There exists a positive constant $C$ depending only on $g$, $\hat\chi$ and $\|u\|_{C^4(X)}$ such that $F$ satisfies the following inequality
\begin{equation}\label{0815007}
  \eta^{kl} F_{kl} -\dot F\geqslant \frac{t}{2n}(\eta^{ij}f_i f_{j} -\dot f)^2 -2\eta^{ij} f_iF_j -(\eta^{ij}f_i f_{j} -\alpha \dot f) -C t \eta^{ij} f_i f_{j} -C t.
\end{equation}
\end{lem}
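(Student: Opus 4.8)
The plan is to run a Li--Yau type computation for the (time dependent, non parallel) operator $\Delta_\eta=\eta^{ij}D_iD_j$. Writing $G=\eta^{ij}f_if_j-\alpha\dot f$ so that $F=tG$, the first step is the bookkeeping identity
\[
\eta^{kl}F_{kl}-\dot F=t\big(\eta^{kl}G_{kl}-\dot G\big)-G,
\]
which already accounts for the term $-(\eta^{ij}f_if_j-\alpha\dot f)$ on the right of \eqref{0815007}. Thus it suffices to estimate $\eta^{kl}G_{kl}-\dot G$, and I split $G$ into its gradient part $\eta^{ij}f_if_j$ and the part $\dot f$, treating $\eta^{kl}G_{kl}-\dot G=(\eta^{kl}D_kD_l-\partial_t)(\eta^{ij}f_if_j)-\alpha(\eta^{kl}D_kD_l-\partial_t)\dot f$.

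For the gradient part I would compute a Bochner formula: differentiating $\eta^{ij}f_if_j$ twice in space with $D$ and once in $t$, the leading term is $2|D^2f|^2_\eta:=2\eta^{kl}\eta^{ij}f_{ik}f_{jl}$, while differentiating the flow equation \eqref{0815004} in space and in time (to handle the $-\alpha(\Delta_\eta-\partial_t)\dot f$ piece) produces the matching third order terms. By Cauchy--Schwarz, $2|D^2f|^2_\eta\ge\frac2n(\Delta_\eta f)^2=\frac2n(\eta^{ij}f_if_j-\dot f)^2$, which is the source of the good quadratic term. As in the classical (fixed metric) case, the remaining contributions organize into three families: (i) third order terms of the schematic shape $\eta^{kl}\eta^{ij}(f_{ilk}-f_{kli})f_j$; (ii) genuinely first order (in $F$) transport terms, which assemble exactly into $-2\eta^{ij}f_iF_j$ once one uses $D_jG=(D_j\eta^{kl})f_kf_l+2\eta^{kl}f_{kj}f_l-\alpha\dot f_j$ together with $D_j\Delta_\eta f=\dot f_j-D_j(\eta^{kl}f_kf_l)$ coming from \eqref{0815004}; and (iii) error terms each carrying one factor among $D\eta$, $D^2\eta$, $\dot\eta$, or the curvature $Rm$ of $g$.

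The decisive point is family (i): after commuting covariant derivatives by the Ricci identity, $\eta^{kl}\eta^{ij}(f_{ilk}-f_{kli})f_j$ collapses to a curvature expression of the form $Rm\ast_g Df\ast_g Df$, so that no uncontrolled third derivatives of $f$ survive. Since $g\le\eta\le Cg$ by \eqref{0906001} and $\eta,\eta^{-1},D\eta,D^2\eta,\dot\eta$ together with $Rm$ are all bounded by a constant depending only on $g$, $\hat\chi$ and $\|u\|_{C^4(X)}$, each error of type (iii) that is quadratic in $Df$ (including the curvature term above) is bounded by $Ct\,\eta^{ij}f_if_j$, while each error pairing $D^2f$ with $D\eta$ or $\dot\eta$ is bounded, by Young's inequality, by $\varepsilon t|D^2f|^2_\eta+Ct\,\eta^{ij}f_if_j+Ct$. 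Absorbing the $\varepsilon t|D^2f|^2_\eta$ into the good term $2t|D^2f|^2_\eta$ still leaves at least $\tfrac{t}{2n}(\eta^{ij}f_if_j-\dot f)^2$; collecting everything then yields precisely \eqref{0815007}.

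The main obstacle is exactly the time dependence and non parallelism of $\eta^{ij}$: unlike the fixed-metric Li--Yau setting, differentiating $\Delta_\eta$ spawns the extra families $D\eta$, $D^2\eta$, $\dot\eta$, and one must verify two matchings carefully --- that the dangerous third order terms cancel up to curvature after commutation, and that the cubic-in-$Df$ terms generated by $D\eta$ are precisely the ones reabsorbed into the transport term $-2\eta^{ij}f_iF_j$ rather than left as standalone errors. Once these are checked and every surviving error is seen to be at worst quadratic in $Df$ (so as to fit the allowed form $Ct\,\eta^{ij}f_if_j+Ct$), the inequality \eqref{0815007} follows from the elementary Cauchy--Schwarz and Young estimates indicated above.
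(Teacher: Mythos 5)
Your proposal is correct and is essentially the paper's own argument: a Li--Yau type computation for $\Delta_\eta-\partial_t$ in which the good term $2t\,\eta^{kl}\eta^{ij}f_{ik}f_{jl}$ is kept, third derivatives are removed by the Ricci identity plus a substitution coming from the flow equation \eqref{0815004} (the paper packages this substitution as \eqref{0815005}, you as the differentiated flow equation --- the same identity), the transport term $-2\eta^{ij}f_iF_j$ is extracted, the $D\eta$, $D^2\eta$, $\dot\eta$ and curvature errors are absorbed by Cauchy--Schwarz (legitimately bounded by $\|u\|_{C^4(X)}$, since $\dot\eta$ involves $D^2\theta(\hat\chi_u)$ and hence four derivatives of $u$), and finally $\eta^{ij}\eta^{kl}f_{il}f_{kj}\geqslant\frac1n(\eta^{ij}f_{ij})^2=\frac1n(\dot f-\eta^{ij}f_if_j)^2$ yields the quadratic term with the factor $\frac{t}{2n}$. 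Your bookkeeping via $G=F/t$ and the cancellation of the $\eta^{ij}f_i\dot f_j$ and $\ddot f$ cross terms reproduce, in slightly different order, exactly the cancellations in \eqref{0815008}--\eqref{0815016}.
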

\begin{proof}
Direct computations show that
\begin{equation}\label{0815008}
\dot F=\eta^{ij} f_if_{j} -\alpha \dot f+2 t\eta^{ij}f_{j} \dot f_i+t \frac{\partial \eta^{ij}}{\partial t} f_i f_{j} -\alpha t\ddot f
\end{equation}
and
\begin{equation} \label{0815009}
\eta^{kl}F_{kl}=t\eta^{kl}\Big(2\eta^{ij} f_{ik}f_{jl} +4\eta^{ij}_{,k}f_{il}f_{j} +2\eta^{ij}f_{ikl}f_{j} +\eta^{ij}_{,k l}f_if_{ j} -\alpha \dot f_{kl}\Big).
\end{equation}
Since $u$ is uniformly bounded along the flow $(\ref{GLMCF1})$, we directly have
\begin{equation} \label{0815013}
t\big|\eta^{kl}\eta^{ij}_{,k l}f_if_{ j}\big|\leqslant Ct\eta^{i j}f_if_{ j},
\end{equation}
where $C$ is a positive constant depending only on $n$, $g$, $\hat\chi$ and $\|u\|_{C^4(X)}$. 

By the Cauchy-Schwarz inequality, we have
\begin{equation} \label{0815010}
|4t\eta^{kl}\eta^{ij}_{,k}f_{il}f_{j}|\leqslant \frac{2C t}{\varepsilon}\eta^{i j}f_if_{ j}+2\varepsilon t \eta^{kl}\eta^{ij} f_{ik}f_{jl},
\end{equation}
where $C$ is a positive constant depending only on $n$, $g$, $\hat\chi$ and $\|u\|_{C^3(X)}$. 

By the Cauchy-Schwarz inequality, \eqref{0815005} and \eqref{0815008}, we have
\begin{equation} \label{0815011}
\begin{split}
2t\eta^{kl}\eta^{ij}f_{ikl}f_{j}&=2t\eta^{ij}\eta^{kl}(f_{kli}f_{j} +f_a R_{k\ il}^{\ a}f_{j})\\
&\geqslant-Ct\eta^{i j}f_if_{ j} +2 t\eta^{i j}f_{ j}(\eta^{k l}f_{kl})_{i} -2t\eta^{ij}f_j\eta^{kl}_{,i}f_{kl}\\
&\geqslant -Ct\eta^{i j}f_if_{j} +2 t\eta^{i j}f_{ j}(\eta^{k l}f_{kl})_{i}-\frac{Ct}{\varepsilon}\eta^{i j}f_if_{j} -\varepsilon t \eta^{kl}\eta^{ij} f_{ik}f_{jl}\\
&=-Ct\eta^{i j}f_if_{j} -2\eta^{ij}f_{j}F_{i} -2t(\alpha-1)\eta^{ij}f_{j}\dot f_i -\frac{Ct}{\varepsilon}\eta^{i j}f_if_{j} -\varepsilon t \eta^{kl}\eta^{ij} f_{ik}f_{jl}\\
&=-Ct\eta^{i j}f_if_{j} -2\eta^{ij}f_{j}F_{i} -(\alpha-1)\dot F +(\alpha-1) (\eta^{ij}f_if_{ j} -\alpha \dot f)\\
&\ \ \ +(\alpha-1) t \frac{\partial \eta^{ij}}{\partial t} f_i f_{j} -\alpha(\alpha-1)t\ddot f-\frac{Ct}{\varepsilon}\eta^{i j}f_if_{j} -\varepsilon t \eta^{kl}\eta^{ij} f_{ik}f_{jl}\\
&\geqslant -Ct\eta^{i j}f_if_{j} -2\eta^{ij}f_{j}F_{i} -(\alpha-1)\dot F +(\alpha-1) (\eta^{ij}f_if_{ j} -\alpha \dot f)\\
&\ \ \ -\alpha(\alpha-1)t\ddot f-\frac{Ct}{\varepsilon}\eta^{i j}f_if_{j} -\varepsilon t \eta^{kl}\eta^{ij} f_{ik}f_{jl},
\end{split}
\end{equation}
and
\begin{equation} \label{0815012}
\begin{split}
-\alpha t\eta^{kl}\dot f_{kl}&=-\alpha t(\frac{F}{t^2}-\frac{\dot F}{t}-(\alpha-1)\ddot f)+\alpha tf_{kl}\frac{\partial\eta^{kl}}{\partial t}\\
&\geqslant -\frac{Ct}{\varepsilon}-\varepsilon t \eta^{kl}\eta^{ij} f_{ik}f_{jl} -\frac{\alpha F}{t}+\alpha \dot F +t\alpha(\alpha-1)\ddot f,
\end{split}
\end{equation}
where $C$ is a positive constant depending only on $n$, $g$, $\hat\chi$ and $\|u\|_{C^4(X)}$. 

Combining \eqref{0815009}-\eqref{0815012} together, we get that
\begin{equation}\label{0815014}
\begin{split}
\eta^{ij} F_{ij}&\geqslant\dot F-2\eta^{ij}F_i f_{j} -(\eta^{ij}f_i f_{j}- \alpha \dot f) +2t(1-2\varepsilon) \eta^{ij}\eta^{kl} f_{il}f_{kj}\\
&\ \ \ -Ct(1+\frac{1}{\varepsilon})\eta^{ij} f_i f_{ j} -\frac{Ct}{\varepsilon}.
\end{split}
\end{equation}
Taking $\varepsilon\leqslant\frac{3}{8}$ and applying the arithmetic-geometric mean inequality and equality \eqref{0815004}
\begin{equation}\label{0815015}
\eta^{ij} \eta^{kl} f_{i l}f_{k j}\geqslant \frac{1}{n} \left(\eta^{ij}f_{ij}\right)^2 =\frac{1}{n}(\dot f-\eta^{ij} f_i f_{j})^2,
\end{equation}
we obtain that
\begin{equation} \label{0815016}
\begin{split}
\eta^{ij} F_{ij}-\dot F&\geqslant\frac{t}{2n} (\dot f-\eta^{ij} f_i f_{j})^2 -2\eta^{ij}F_i f_{j} -(\eta^{ij}f_i f_{j}- \alpha \dot f) \\
&\ \ \ -Ct\eta^{i j} f_i f_{ j} -Ct,
\end{split}
\end{equation}
where $C$ is a positive constant depending only on $n$, $g$, $\hat\chi$ and $\|u\|_{C^4(X)}$. We complete the proof.
\end{proof}

\begin{lem}
\label{0815017}
There exists a positive constant $C$ depending only on $\alpha$, $n$, $g$, $\hat\chi$ and $\|u\|_{C^4(X)}$ such that for any $t>0$,
\begin{equation} \label{0815018}
\eta^{ij}f_if_{j} -\alpha \dot f\leqslant C+\frac{C}{t}.
\end{equation}
\end{lem}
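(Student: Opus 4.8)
The plan is to bound $\eta^{ij}f_if_j-\alpha\dot f=F/t$ at each fixed time by running a maximum-principle argument for $F$ on a space--time slab, in the spirit of the Li--Yau gradient estimate. Fix $T'>0$ and let $(x_0,t_0)$ be a point where $F$ attains its maximum over $X\times[0,T']$. Two cases are immediate: if $t_0=0$ then $F\equiv 0$ there; and if $\eta^{ij}f_if_j-\alpha\dot f\leqslant 0$ at $(x_0,t_0)$, the desired bound already holds with constant $0$. So I may assume $t_0>0$ and $F(x_0,t_0)>0$. At such a spatial maximum with $t_0>0$ one has $F_j=0$, $\eta^{kl}F_{kl}\leqslant 0$ (the spatial Hessian is non-positive and $\eta>0$), and $\dot F\geqslant 0$, whence $\eta^{kl}F_{kl}-\dot F\leqslant 0$.

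Next I feed this into the differential inequality of Lemma~\ref{08150006}. Writing $a:=\eta^{ij}f_if_j\geqslant 0$ and $y:=\eta^{ij}f_if_j-\alpha\dot f=F/t_0$, and using $F_j=0$, the inequality collapses at $(x_0,t_0)$ to
\begin{equation*}
\frac{t_0}{2n}\,(a-\dot f)^2\leqslant y+Ct_0\,a+Ct_0 .
\end{equation*}
The key simplification is to eliminate $\dot f$: since $y=a-\alpha\dot f$ gives $a-\dot f=\frac{(\alpha-1)a+y}{\alpha}$, the left-hand side becomes $\frac{t_0}{2n\alpha^2}\big((\alpha-1)a+y\big)^2$.

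The rest is pure algebra. Expanding the square and discarding the non-negative cross term $2(\alpha-1)ay$ (here $a\geqslant 0$ and $y>0$) leaves
\begin{equation*}
\frac{t_0(\alpha-1)^2}{2n\alpha^2}\,a^2+\frac{t_0}{2n\alpha^2}\,y^2\leqslant y+Ct_0\,a+Ct_0 .
\end{equation*}
Because $\alpha>1$, the coefficient $(\alpha-1)^2$ of $a^2$ is strictly positive, so Young's inequality dominates the error term $Ct_0a$ by $\tfrac{t_0(\alpha-1)^2}{4n\alpha^2}a^2+C't_0$; dropping the surviving non-negative $a^2$-contribution and multiplying through by $t_0$ converts the estimate into a quadratic inequality in $F=t_0y$ alone,
\begin{equation*}
\frac{1}{2n\alpha^2}\,F^2\leqslant F+C't_0^2 ,
\end{equation*}
from which the quadratic formula yields $F(x_0,t_0)\leqslant C(1+t_0)$ for a constant $C$ depending only on $\alpha,n,g,\hat\chi$ and $\|u\|_{C^4(X)}$.

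Finally, since $(x_0,t_0)$ is the maximum over the slab and $t_0\leqslant T'$, this bounds $F(\cdot,T')\leqslant C(1+T')$; dividing by $T'$ gives $\eta^{ij}f_if_j-\alpha\dot f=F(\cdot,T')/T'\leqslant C+C/T'$, and letting $T'$ be arbitrary proves the claim. The only genuinely delicate point is the algebraic step: one must ensure the curvature/metric error $Ct_0a$ is dominated, and this is exactly where $\alpha>1$ is used --- it produces the positive $(\alpha-1)^2a^2$ buffer. Had $\alpha=1$ the $a^2$ term would be absent and the argument would break, which is the structural reason for the restriction $\alpha\in(1,2)$.
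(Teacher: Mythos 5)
Your proof is correct, and it follows the same overall strategy as the paper's: maximize $F$ on the slab $X\times[0,T']$, use the critical-point conditions $F_j=0$, $\eta^{kl}F_{kl}\leqslant 0$, $\dot F\geqslant 0$ to reduce Lemma \ref{08150006} to the pointwise inequality $\frac{t_0}{2n}(\eta^{ij}f_if_j-\dot f)^2\leqslant (\eta^{ij}f_if_j-\alpha\dot f)+Ct_0\,\eta^{ij}f_if_j+Ct_0$, and then divide $F$ by $T'$ at the end. Where you genuinely depart from the paper is in the algebra at the maximum point. The paper splits into the two cases $\dot f(x_0,t_0)>0$ and $\dot f(x_0,t_0)\leqslant 0$; the first case is quick, but the second requires deriving two separate quadratic inequalities (one bounding $\eta^{ij}f_if_j$ in terms of $-\dot f$, one bounding $-\dot f$ in terms of $\eta^{ij}f_if_j$) and combining them by absorption. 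Your substitution $\dot f=(a-y)/\alpha$, which rewrites $a-\dot f=\frac{(\alpha-1)a+y}{\alpha}$ and expands the square keeping the positive $(\alpha-1)^2a^2$ buffer to absorb the error term $Ct_0a$, eliminates $\dot f$ in one stroke and yields a single quadratic inequality in $F$ alone, with no case analysis. This is cleaner and makes transparent the structural role of $\alpha>1$ (exactly the point you flag: at $\alpha=1$ the $a^2$ buffer disappears), a fact that is present but less visible in the paper's two-case argument. Both arguments give constants of the same quality, depending on $\alpha$, $n$, $g$, $\hat\chi$ and $\|u\|_{C^4(X)}$.
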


\begin{proof}
For any $T>0$, let $(x_0,t_0)$ be the maximum point of $F$ on $X\times [0,T]$. If $t_0=0$, \eqref{0815018} can be deduced directly. We only need to consider the case $t_0>0$. At $(x_0,t_0)$, by Lemma \ref{08150006},
\begin{equation}
\label{0815019}
\frac{t_0}{2n}(\eta^{ij}f_i f_{j} -\dot f)^2-(\eta^{ij}f_i f_{j}-\alpha \dot f) \leqslant C t_0 \eta^{ij} f_i f_{j} +Ct_0.
\end{equation}

If $\dot f(x_0,t_0)>0$, since $\alpha\in(1,2)$, we have 
\begin{equation} \label{0815020}
\frac{t_0}{2n}(\eta^{ij}f_i f_{j} -\dot f)^2-(\eta^{ij}f_i f_{j}-\dot f) \leqslant C t_0 \eta^{ij} f_i f_{j} +Ct_0,
\end{equation}
which implies that at $(x_0,t_0)$, by using the Cauchy-Schwarz inequality, we have
\begin{equation} \label{0815021}
\eta^{ij} f_i f_{j} -\dot f\leqslant C \sqrt{\eta^{i j} f_i f_{ j}} +C +\frac{C}{t_0} \leqslant \left(1-\frac{1}{\alpha}\right) \eta^{ij} f_i f_{ j} +C+\frac{C}{t_0},
\end{equation}
that is,
\begin{equation} \label{0815022}
\eta^{ij}f_i f_{j}-\alpha \dot f\leqslant C+\frac{C}{t_0},
\end{equation}
where $C$ is a positive constant depending only on $\alpha$, $n$, $g$, $\hat\chi$ and $\|u\|_{C^4(X)}$. Then for any $x\in X$, 
\begin{equation} \label{0815023}
F(x,T)\leqslant F(x_0,t_0)= t_0(\eta^{i j}f_i f_{ j}-\alpha \dot f)\leqslant Ct_0 +C\leqslant CT+C.
\end{equation}
Therefore, we have
\begin{equation} \label{0815024}
(\eta^{ij}f_i f_{j} -\alpha \dot f)(x,T) \leqslant C+\frac{C}{T}\ \ \ on\ X.
\end{equation}
Since $T$ is arbitrary, we get \eqref{0815018}.

If $\dot f(x_0,t_0)\leqslant 0$. By \eqref{0815019}, at $(x_0,t_0)$, we have
\begin{equation}
\label{0815025}
\frac{t_0}{2n}(\eta^{i j} f_i f_{j})^2 -\eta^{ij} f_i f_{j} +\alpha \dot f \leqslant Ct_0 \eta^{ij} f_i f_{j} +Ct_0,
\end{equation}
that is,
\begin{equation}
\label{0815026}
\frac{1}{2n}(\eta^{ij}f_i f_{j})^2 -(\frac{1}{t_0} +C)\eta^{ij}f_i f_{j}\leqslant C -\frac{\alpha \dot f}{t_0}.
\end{equation}
By the Cauchy-Schwarz inequality,
\begin{equation}
\label{0815027}
\frac{1}{2n}(\eta^{ij}f_i f_{j})^2 -(\frac{1}{t_0} +C)\eta^{ij}f_i f_{j}\leqslant C +\frac{C}{t^2_0} + \frac{\dot f^2}{4}.
\end{equation}
Therefore, at $(x_0,t_0)$, we have
\begin{equation}\label{0815028}
\eta^{ij} f_i f_{j} \leqslant C+\frac{C}{t_0}-\frac{\dot f}{2},
\end{equation}
where $C$ is a positive constant depending only on $\alpha$, $n$, $g$, $\hat\chi$ and $\|u\|_{C^4(X)}$.

On the other hand, by inequality \eqref{0815019}, at $(x_0,t_0)$, 
\begin{equation}
\label{0815029}
\frac{t_0}{2n}\dot f^2 +\alpha \dot f\leqslant C t_0 \eta^{ij} f_i f_{j} +C t_0 +\eta^{ij} f_i f_{ j},
\end{equation}
that is,
\begin{equation}
\label{0815030}
\frac{1}{2n} \dot f^2 +\frac{\alpha}{t_0} \dot f\leqslant C \eta^{ij} f_i f_{j} +C +\frac{1}{t_0}\eta^{ij} f_i f_{j}.
\end{equation}
By the Cauchy-Schwarz inequality,
\begin{equation}\label{0815031}
\frac{1}{2n} \dot f^2 +\frac{\alpha }{t_0}\dot f\leqslant \frac{1}{4} \left( \eta^{ij} f_i f_{j}\right)^2 +\frac{C}{t^2_0} +C.
\end{equation}
Hence at $(x_0,t_0)$, there exist a positive constant $C$ depending only on $\alpha$, $n$, $g$, $\hat\chi$ and $\|u\|_{C^4(X)}$ such that
\begin{equation}\label{0815032}
-\dot f\leqslant \frac{C}{t_0}+\frac{\eta^{ij}f_i f_{j}}{2}+C.
\end{equation}
Then inequalities \eqref{0815028} and \eqref{0815032} imply that
\begin{equation}\label{0815033}
  \eta^{ij} f_i f_{j}\leqslant C+\frac{C}{t_0} +\frac{\eta^{ij} f_i f_{j}}{4},
\end{equation}
that is,
\begin{equation}\label{0815034}
\eta^{ij}f_i f_{j}\leqslant C+\frac{C}{t_0}.
\end{equation}
Combining this inequality with \eqref{0815032}, we have
\begin{equation}\label{0815035}
-\dot f\leqslant C+\frac{C}{t_0}.
\end{equation}
Therefore,  there exist a positive constant $C$ depending only on $\alpha$, $n$, $g$, $\hat\chi$ and $\|u\|_{C^4(X)}$ such that at $(x_0,t_0)$,
\begin{equation}\label{0815036}
\eta^{ij}f_i f_{j} -\alpha \dot f\leqslant C+\frac{C}{t_0}.
\end{equation}
Then same arguments as in the former case imply inequality  \eqref{0815018}.
\end{proof}

By using Lemma \ref{0815017}, we prove the following Harnack-type inequality along the generalized Lagrangian mean curvature flow $(\ref{GLMCF1})$.
\begin{thm}\label{0815037}
There exists a positive constant $C$ depending only on $n$, $g$, $\hat\chi$ and $\|u\|_{C^4(X)}$ such that for any $0<t_1<t_2$, we have 
\begin{equation}\label{0815038}
\sup_{x\in X} v(x,t_1) \leqslant \inf_{x\in X }  v(x,t_2)\left(\frac{t_2}{t_1}\right)^{C} e^{\frac{C}{t_2-t_1}+C(t_2-t_1)}.
\end{equation}
\end{thm}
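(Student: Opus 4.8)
The plan is to derive the Harnack inequality by integrating the differential Harnack estimate of Lemma \ref{0815017} along a space-time path, in the manner of Li--Yau. Writing $f=\log v$ as in Lemma \ref{0815017} (recall $v>0$, so $f$ is well defined), that lemma gives, for every $t>0$,
\begin{equation*}
\alpha\dot f\geqslant \eta^{ij}f_if_j-C-\frac{C}{t}.
\end{equation*}
Fix $0<t_1<t_2$ and arbitrary points $x_1,x_2\in X$, and let $\gamma\colon[t_1,t_2]\to X$ be a constant-speed minimizing $g$-geodesic from $x_1$ to $x_2$. Set $\phi(s)=f(\gamma(s),s)$, so that $\dot\phi=f_i\dot\gamma^i+\dot f$. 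Completing the square with respect to the metric $\eta$ (i.e. Cauchy--Schwarz $|f_i\dot\gamma^i|\leqslant (\eta^{ij}f_if_j)^{1/2}(\eta_{kl}\dot\gamma^k\dot\gamma^l)^{1/2}$) yields $\tfrac{1}{\alpha}\eta^{ij}f_if_j+f_i\dot\gamma^i\geqslant-\tfrac{\alpha}{4}\eta_{kl}\dot\gamma^k\dot\gamma^l$, and combining this with the displayed estimate gives
\begin{equation*}
\dot\phi\geqslant-\frac{\alpha}{4}\,\eta_{kl}\dot\gamma^k\dot\gamma^l-\frac{1}{\alpha}\Big(C+\frac{C}{s}\Big).
\end{equation*}

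Next I would integrate this inequality over $[t_1,t_2]$. The term $\tfrac{C}{\alpha}\int_{t_1}^{t_2}\tfrac{ds}{s}$ contributes $C\log(t_2/t_1)$, and the linear term contributes $C(t_2-t_1)$. To control the energy $\int_{t_1}^{t_2}\eta_{kl}\dot\gamma^k\dot\gamma^l\,ds$, I would use that, by Theorem \ref{0824050} and \eqref{0906001}, $\eta$ is uniformly equivalent to $g$ along the flow, so $\eta_{kl}\dot\gamma^k\dot\gamma^l\leqslant C|\dot\gamma|_g^2$; since $\gamma$ has constant $g$-speed $d_g(x_1,x_2)/(t_2-t_1)$ and $(X,g)$ is compact with bounded diameter, this energy is at most $C\,d_g(x_1,x_2)^2/(t_2-t_1)\leqslant C/(t_2-t_1)$. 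Therefore
\begin{equation*}
f(x_2,t_2)-f(x_1,t_1)=\int_{t_1}^{t_2}\dot\phi\,ds\geqslant-\frac{C}{t_2-t_1}-C(t_2-t_1)-C\log\frac{t_2}{t_1},
\end{equation*}
after absorbing the fixed factor $\alpha\in(1,2)$ into $C$.

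Finally, rearranging gives $f(x_1,t_1)-f(x_2,t_2)\leqslant \tfrac{C}{t_2-t_1}+C(t_2-t_1)+C\log(t_2/t_1)$; exponentiating, taking the supremum over $x_1$ and the infimum over $x_2$, and rewriting $e^{C\log(t_2/t_1)}=(t_2/t_1)^{C}$ yields exactly \eqref{0815038}. The only genuinely delicate point is the path-integration setup and the control of the energy term: one must choose the interpolating curve on the right time interval and invoke the uniform equivalence $g\leqslant\eta\leqslant Cg$ together with the bounded diameter of $(X,g)$. Everything else is bookkeeping of constants, all of which depend only on $n$, $g$, $\hat\chi$ and $\|u\|_{C^4(X)}$ as claimed.
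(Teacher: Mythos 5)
Your proposal is correct and follows essentially the same route as the paper: both integrate the differential Harnack estimate of Lemma \ref{0815017} along a space-time path joining $(x_1,t_1)$ to $(x_2,t_2)$, using Cauchy--Schwarz to absorb the spatial gradient term and the compactness of $(X,g)$ together with $g\leqslant\eta\leqslant Cg$ to control the path's energy. The only differences are cosmetic (you parametrize the geodesic by time on $[t_1,t_2]$ rather than on $[0,1]$, and you make the bound on $|\dot\gamma|_\eta$ explicit where the paper absorbs it into constants).
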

\begin{proof}
Let $x,y\in X$ be two arbitrary points and $\gamma$ be a geodesic with respect to metric $g$ such that
\begin{equation}\label{0815039}
\gamma(0)=x\ \ \ \text{and}\ \ \  \gamma(1)=y.
\end{equation}
We define curve $\xi(s):[0,1]\to X\times [t_1,t_2]$ by
\begin{equation}\label{0815040}
\xi(s)=(\gamma(s),(1-s)t_1+st_2),
\end{equation}
that is, $\gamma$ is a curve in $X\times [t_1,t_2]$ connecting $(x,t_1)$ and $(y,t_2)$. Then by Lemma \ref{0815017},
\begin{equation}\label{0815041}
\begin{split}
\log\frac{ v(x,t_1)}{ v(y,t_2)}=& -\int_0^1 \frac{\partial }{\partial s} f(\xi(s))ds\\
=& \int_0^1 (-df\dot \gamma -\dot f(t_2-t_1)) ds\\
\leqslant & \int_0^1 (\sqrt{\eta^{ij} f_i f_{j}}-\frac{t_2-t_1}{\alpha} \eta^{ij}f_if_{ j} -\dot f(t_2-t_1) +\frac{t_2-t_1}{\alpha} \eta^{ij}f_if_{j}) ds\\
\leqslant &\int_0^1 \left(\frac{\alpha}{4(t_2-t_1)} +C(t_2-t_1)+ \frac{C(t_2-t_1)}{(1-s)t_1+ st_2}\right)ds\\
=&\frac{C}{t_2-t_1} +C(t_2-t_1) +C\log\frac{t_2}{t_1},
\end{split}
\end{equation}
which implies that
\begin{equation}\label{0815042}
v(x,t_1)\leqslant v(y,t_2)\left(\frac{t_2}{t_1}\right)^{C} e^{\frac{C}{t_2-t_1} +C(t_2-t_1)},
\end{equation}
where $C$ is a positive constant depending only on $\alpha$, $n$, $g$, $\hat\chi$ and $\|u\|_{C^4(X)}$. Since $x,y$ are arbitrary points in $X$, after fixing some $\alpha\in(1,2)$, we complete the proof.
\end{proof}

\subsection{Exponential Convergence}
As an application of the Harnack-type inequality \eqref{0815038}, by following Cao's arguments \cite{Cao} for K\"ahler-Ricci flow, we prove the following estimates for
\begin{equation}\label{0815043}
\tilde u(t)= u(t)-\frac{\int_X u(t) dV_g}{\int_X dV_g}.
\end{equation}
\begin{thm}\label{08150441}
There exist positive constants $C_1$ and $C_2$ depending only on $n$, $g$, $\hat\chi$ and $\|u\|_{C^4(X)}$ such that
\begin{equation}\label{0815044}
\left|\frac{\partial\tilde u(t)}{\partial t}\right|\leqslant C_1 e^{-C_2 t}.
\end{equation}
\end{thm}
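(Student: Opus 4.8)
The plan is to reduce the statement to an exponential decay of the spatial oscillation of $\dot u=\partial_t u$ and then to extract that decay from the Harnack inequality of Theorem \ref{0815037}. First I would note that $\int_X dV_g$ is constant in $t$, so differentiating \eqref{0815043} gives $\partial_t\tilde u=\dot u-\bigl(\int_X\dot u\,dV_g\bigr)/\bigl(\int_X dV_g\bigr)$, and the average on the right lies between $\min_X\dot u$ and $\max_X\dot u$. Writing $\operatorname{osc}(\dot u(t))=\max_X\dot u(\cdot,t)-\min_X\dot u(\cdot,t)$, this yields $|\partial_t\tilde u|\le\operatorname{osc}(\dot u(t))$, so it suffices to prove $\operatorname{osc}(\dot u(t))\le C_1 e^{-C_2 t}$. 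Since $\dot u$ solves the heat-type equation \eqref{0815001}, the maximum principle shows that $M(t):=\max_X\dot u(\cdot,t)$ is non-increasing and $m(t):=\min_X\dot u(\cdot,t)$ is non-decreasing, so $\operatorname{osc}(\dot u(t))=M(t)-m(t)$ is itself non-increasing.

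The key step would be a one-step contraction over a unit time interval obtained from the Harnack inequality. Fix $a\ge 0$. Because $\dot u(\cdot,\tau)\le M(\tau)\le M(a)$ and $\dot u(\cdot,\tau)\ge m(\tau)\ge m(a)$ for all $\tau\ge a$, the functions $v_1:=M(a)-\dot u$ and $v_2:=\dot u-m(a)$ are non-negative solutions of \eqref{0815002} on $[a,\infty)$; unless $\dot u$ is already spatially constant (in which case $\operatorname{osc}(\dot u)\equiv 0$ and the claim is trivial), the strong maximum principle makes them strictly positive for $\tau>a$. Since $\|u\|_{C^4(X)}$ is uniformly bounded along the flow by the estimates of Section \ref{sec-4}, Theorem \ref{0815037} applies to $v_1$ and $v_2$ after translating time by $a$, with a Harnack constant that is uniform in $a$. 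Applying it on the subinterval $[a+\tfrac12,a+1]$, where positivity holds, gives with $K:=2^{C}e^{2C+C/2}$
\[
M(a)-m(a+\tfrac12)\le K\bigl(M(a)-M(a+1)\bigr),\qquad M(a+\tfrac12)-m(a)\le K\bigl(m(a+1)-m(a)\bigr).
\]
Using that $M$ is non-increasing and $m$ is non-decreasing, each left-hand side is bounded below by $\operatorname{osc}(\dot u(a+1))=M(a+1)-m(a+1)$.

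Adding the two resulting inequalities $\operatorname{osc}(\dot u(a+1))\le K(M(a)-M(a+1))$ and $\operatorname{osc}(\dot u(a+1))\le K(m(a+1)-m(a))$ produces
\[
2\,\operatorname{osc}(\dot u(a+1))\le K\bigl(\operatorname{osc}(\dot u(a))-\operatorname{osc}(\dot u(a+1))\bigr),
\]
hence $\operatorname{osc}(\dot u(a+1))\le\theta\,\operatorname{osc}(\dot u(a))$ with $\theta:=K/(K+2)<1$. Iterating at $a=0,1,2,\dots$ gives $\operatorname{osc}(\dot u(n))\le\theta^{\,n}\operatorname{osc}(\dot u(0))$ for integers $n$, and monotonicity of the oscillation interpolates this to all $t\ge 0$: with $n=\lfloor t\rfloor$,
\[
\operatorname{osc}(\dot u(t))\le\theta^{\,n}\operatorname{osc}(\dot u(0))\le\theta^{-1}\operatorname{osc}(\dot u(0))\,e^{-(-\log\theta)\,t}.
\]
Since $\dot u(\cdot,0)=\theta(\hat\chi_{u_0})$ is controlled by $n$, $g$, $\hat\chi$ and $\|u_0\|_{C^2}$, we have $\operatorname{osc}(\dot u(0))\le C$, and taking $C_2=-\log\theta$, $C_1=\theta^{-1}\operatorname{osc}(\dot u(0))$ together with $|\partial_t\tilde u|\le\operatorname{osc}(\dot u(t))$ finishes the argument.

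The hard part is not in the contraction-and-iteration, which is elementary once set up, but in arranging the Harnack application correctly: one must work with genuinely positive solutions, which is why I freeze the running extrema $M(a),m(a)$ and invoke the strong maximum principle on the subinterval $[a+\tfrac12,a+1]$, and one must ensure that the Harnack constant does not deteriorate as $t\to\infty$, which is exactly where the uniform $C^4$ bound coming from the long-time existence (Theorem \ref{0824061}) is indispensable. With those two points secured, everything reduces to the Harnack inequality already proved in Theorem \ref{0815037}.
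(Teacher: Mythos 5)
Your proposal is correct and follows essentially the same argument as the paper: the paper likewise reduces the claim to exponential decay of the oscillation of $\dot u$, freezes the running extrema to form the non-negative solutions $\xi_m=\sup_X\dot u(\cdot,m-1)-\dot u$ and $\psi_m=\dot u-\inf_X\dot u(\cdot,m-1)$ (your $v_1,v_2$), applies Theorem \ref{0815037} with $t_1=\tfrac12$, $t_2=1$, and iterates the resulting contraction with monotonicity of the oscillation. The only differences are cosmetic: the paper deduces $|\dot{\tilde u}|\leqslant\operatorname{osc}(\dot u)$ from the zero-mean property rather than from the average lying between the extrema, and its contraction factor is $\tfrac{C-1}{C}$ instead of your $\tfrac{K}{K+2}$, obtained by slightly different but equivalent algebra.
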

\begin{proof}
We denote $\varphi(t)$ and $\tilde{\varphi}(t)$ to be $\dot u(t)$ and $\dot{\tilde u}(t)$ respectively. It is easy to see that $\tilde{\varphi}(t)$ and $\varphi(t)$ satisfy
\begin{equation}\label{0815045}
\int_X \tilde\varphi(t) dV_g=0 \text{ and }  \frac{\partial \varphi(t)}{\partial t}=\eta^{ij}\varphi_{ij}.
\end{equation}
For any $t>0$ and $x,y\in X$, we have
\begin{equation}\label{0815046}
|\tilde \varphi(x,t)-\tilde\varphi(y,t)|=|\varphi(x,t)-\varphi(y,t)|.
\end{equation}
By the maximum principle, for any $0<t_1<t_2$, we have
\begin{equation}\label{0815047}
\sup_{y\in X} \varphi(y,t_2)\leqslant \sup_{y\in X} \varphi(y,t_1)\leqslant \sup_{y\in X}\varphi(y,0),
\end{equation} 
and  
\begin{equation}\label{0815048}
\inf_{y\in X}\varphi(y,t_2)\geqslant \inf_{y\in X} \varphi(y,t_1)\geqslant \inf_{y\in X}\varphi(y,0).
\end{equation}
Let $m$ be an arbitrary positive integer. For any $(x,t)$, we define
\begin{equation}\label{0815049}
\xi_m(x,t)=\sup_{y\in X} \varphi(y,m-1) -\varphi(x,m-1+t)
\end{equation}
and
\begin{equation}\label{0815050}
\psi_m(x,t)=\varphi(x,m-1+t)-\inf_{y\in X} \varphi(y,m-1).
\end{equation}
Then by \eqref{0815047} and \eqref{0815048}, both $\xi_m$ and $\varphi_m$ are non-negative and satisfy equations
\begin{equation}\label{0815051}
\frac{\partial \xi_m}{\partial t}(x,t) =\eta^{ij}(x,m-1+t) (\xi_m)_{i j}(x,t)
\end{equation}
and
\begin{equation}\label{0815052}
\frac{\partial \psi_m}{\partial t}(x,t) =\eta^{ij}(x,m-1+t) (\psi_m)_{i j}(x,t).
\end{equation}

If $\varphi(x,m-1)$ is constant, then $\varphi(x,t)$ must be constant for all $t\geqslant m-1$ by the maximum principle. Then \eqref{0815045} and \eqref{0815046} imply that $\tilde\varphi(x,t)=0$ on $X\times[m-1,+\infty)$. Hence \eqref{0815044} is obvious.

Nexy, we assume that $\varphi(x,m-1)$ is not constant. It follows that at $t=0$, $\xi_m$ must be positive at some point $x_0$. By the strong maximum principle, $\xi_m(x,t)$ must be positive on $X\times (0,+\infty)$. Similarly, we also have $\psi_m(x,t)>0$ on $X\times (0,+\infty)$. Then applying Theorem \ref{0815037} with $t_1=\frac{1}{2}$ and $t_2=1$, we obtain
\begin{equation}\label{0815053}
\begin{split}
\sup_{y\in X} \varphi(y,m-1) -\inf_{y\in X} \varphi(y,m-\frac{1}{2})&\leqslant C(\sup_{y\in X} \varphi(y,m-1) -\sup_{y\in X}\varphi(y,m)),\\
\sup_{y\in X} \varphi(y,m-\frac{1}{2}) -\inf_{y\in X} \varphi(y,m-1)&\leqslant C(\inf_{y\in X} \varphi(y,m) -\inf_{y\in X} \varphi(y,m-1)),
\end{split}
\end{equation}
where $C>1$ is a constant depending only on $n$, $g$, $\hat\chi$ and $\|u\|_{C^4(X)}$. Denote
\begin{equation}\label{0815054}
\chi(t)=\sup_{y\in X} \varphi(y,t)- \inf_{y\in X} \varphi(y,t).
\end{equation}
By \eqref{0815053}, we have
\begin{equation}\label{0815055}
\chi(m-1)+\chi(m-\frac{1}{2})\leqslant C(\chi(m-1)-\chi(m)).
\end{equation}
Since $\chi$ is a non-negative, there holds
\begin{equation}\label{0815056}
\chi(m)\leqslant \frac{C-1}{C}\chi(m-1).
\end{equation}
By induction, we obtain
\begin{equation}\label{0815057}
\chi(m)\leqslant \left(\frac{C-1}{C}\right)^m \chi(0).
\end{equation}
According to inequalities \eqref{0815047} and \eqref{0815048}, $\chi(t)$ is decreasing in $t$. Therefore, for any $t\in[m,m+1)$,
\begin{equation}\label{0815058}
\chi(t)\leqslant \chi(m)\leqslant \left(\frac{C-1}{C}\right)^m \chi(0)\leqslant \left(\frac{C-1}{C}\right)^t\left(\frac{C}{C-1}\right)^{t-m} \chi(0)\leqslant C_1e^{-C_2t},
\end{equation}
where $C_1=\frac{C\chi(0)}{C-1}$ and $C_2=\log \frac{C}{C-1}$. Since $m$ is arbitrary, we have
\begin{equation}\label{0815058}
\chi(t)\leqslant C_1e^{-C_2t},
\end{equation}
where $C_1$ and $C_2$ are positive constants depending only on $n$, $g$, $\hat\chi$ and $\|u\|_{C^4(X)}$. 

Since $\int_X \tilde\varphi dV_g=0$, for any $t>0$, there must be a point $x_t\in M$ such that $\tilde\varphi(x_t,t)=0$. Then for any $(x,t)\in M\times [0,\infty)$, we have
\begin{equation}\label{0815059}
|\tilde\varphi(x,t)|=|\tilde\varphi(x,t)-\tilde\varphi(x_t,t)|=|\varphi(x,t)-\varphi(x_t,t)|\leqslant \chi(t)\leqslant C_1 e^{-C_2t}.
\end{equation}
We complete the proof.
\end{proof}
At last, we prove the following convergence result.
\begin{thm}\label{theo5.5}
There exists a constant $\delta_0>0$ such that if $\rho\leqslant \delta_0$ at $t=0$, then $\hat \chi_{u(t)}$ converges exponentially to $\hat \chi$ along the generalized Lagrangian mean curvature flow $(\ref{GLMCF1})$ as $t$ goes to $+\infty$.
\end{thm}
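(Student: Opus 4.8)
The plan is to assemble the long-time existence and decay estimates proved above into a $C^\infty$ statement and then pin down the limit using the uniqueness theorem. First I would fix $\delta_0$ to be the smaller of the constants appearing in Theorem \ref{0824061} and Theorem \ref{0824050}, so that under the hypothesis $\rho\leqslant\delta_0$ at $t=0$ the flow \eqref{GLMCF1} exists for all $t\in[0,+\infty)$ and $\Theta$ is uniformly bounded. As noted before Theorem \ref{0824061}, this makes $\eta$ uniformly $C^1$-bounded, and the standard parabolic Schauder estimates then yield uniform bounds on every $C^k$ norm of $u(t)$ on $[0,+\infty)$. In particular $\varphi=\dot u$ and its mean-zero part $\tilde\varphi=\dot{\tilde u}$ have $C^k$ norms bounded uniformly in $t$ for every $k$.

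The analytic heart is already supplied by Theorem \ref{08150441}, which gives $\|\tilde\varphi(\cdot,t)\|_{C^0(X)}\leqslant C_1 e^{-C_2 t}$. The next step is to promote this $C^0$ decay to decay in every $C^k$ norm. Since $\tilde\varphi$ is uniformly bounded in $C^{k+1}$, the interpolation inequality $\|\tilde\varphi\|_{C^k}\leqslant C\,\|\tilde\varphi\|_{C^0}^{1/(k+1)}\|\tilde\varphi\|_{C^{k+1}}^{k/(k+1)}$ on the compact manifold $X$ produces $\|\tilde\varphi(\cdot,t)\|_{C^k(X)}\leqslant C_k\, e^{-C_2 t/(k+1)}$. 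Because the base metric $g$ is fixed, $\partial_t(du)=\partial_t(d\tilde u)=d\tilde\varphi$ decays exponentially in every $C^k$ norm, so integrating in time gives, for $t_2>t_1$, the Cauchy estimate $\|du(t_2)-du(t_1)\|_{C^k(X)}\leqslant\int_{t_1}^{t_2}\|d\tilde\varphi\|_{C^k}\,ds\leqslant C_k'\,e^{-C_2 t_1/(k+1)}$. Hence $du(t)$ converges exponentially in $C^\infty$ to a limit $du_\infty$; I set $\hat\chi_\infty=\hat\chi+du_\infty$, a closed $1$-form in the cohomology class $[\hat\chi]$.

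It remains to identify the limit, and this is where the uniqueness result enters. From \eqref{GLMCF1} we have $\dot u=\theta(\hat\chi_t)$, so $d\theta(\hat\chi_t)=d\dot u=d\tilde\varphi\to 0$ in $C^\infty$; since $\hat\chi_t\to\hat\chi_\infty$ in $C^\infty$ we may pass to the limit in the angle functional to get $d\theta(\hat\chi_\infty)=0$, that is, the Lagrangian angle of $\Sigma_{\hat\chi_\infty}$ is constant and $\Sigma_{\hat\chi_\infty}$ is special Lagrangian. As $\hat\chi_\infty$ and $\hat\chi$ are both closed $1$-forms in $[\hat\chi]$ inducing special Lagrangian graphs, Theorem \ref{0815100} forces $\hat\chi_\infty=\hat\chi$, i.e. $du_\infty=0$. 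Therefore $\hat\chi_{u(t)}=\hat\chi+du(t)$ converges exponentially to $\hat\chi$ in the $C^\infty$-sense, which is the claim.

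The step I expect to require the most care is the identification of the limit rather than the interpolation bookkeeping. Theorem \ref{08150441} only controls the time-derivative $\tilde\varphi$, so on its own it gives convergence of $du(t)$ to \emph{some} $du_\infty$, not to zero; what closes the argument is the exponential-in-time Cauchy estimate (which gives a genuine $C^\infty$ limit rather than mere subsequential compactness) together with the passage to the limit in the angle equation, so that the uniqueness Theorem \ref{0815100} genuinely applies. I would also note that the interpolation degrades the rate like $C_2/(k+1)$ in the $C^k$ norm, which is harmless for qualitative $C^\infty$ exponential convergence but should be stated with the rate attached to each fixed $k$.
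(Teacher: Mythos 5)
Your proof is correct, and while it rests on the same three pillars as the paper's argument --- the decay estimate of Theorem \ref{08150441}, the uniform higher-order estimates following from Theorems \ref{0824031}, \ref{0824050} and \ref{0824061}, and the uniqueness Theorem \ref{0815100} --- the way you assemble them is genuinely different. The paper works with $\tilde u$ itself: it first shows $\tilde u(t)$ is Cauchy in $C^0$, extracts a subsequential $C^\infty$ limit of $v(t_i)=u(t_i)-\theta(\hat\chi)t_i$, identifies that limit as a constant via the decay of the Lagrangian angle together with Theorem \ref{0815100}, upgrades subsequential convergence to full $C^\infty$ convergence by a contradiction argument, and only then produces the exponential rate through the integration-by-parts estimate on $\int_X|D^k\tilde u(t)|^2\,dV_g$ followed by Sobolev embedding. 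You instead work at the level of the time derivative: interpolating between the $C^0$ decay of $\tilde\varphi$ and its uniform $C^{k+1}$ bounds gives exponential decay of $\|\tilde\varphi\|_{C^k}$, and integrating in time makes $du(t)$ exponentially Cauchy in every $C^k$, so the genuine limit and the exponential rate come out simultaneously; uniqueness is invoked only at the very end to identify the limit as $\hat\chi$. Your route avoids both the contradiction/compactness step and the $W^{k,2}$--Sobolev detour, at the price of rates degrading like $C_2/(k+1)$ in $C^k$ (the paper's $L^2$ trick keeps the exponent independent of $k$, only the constants grow); either version suffices for the statement.

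Two small points to tighten. First, ``uniform bounds on every $C^k$ norm of $u(t)$'' is not literally true when $\theta(\hat\chi)\neq 0$, since $\dot u\to\theta(\hat\chi)$ forces $u$ to grow linearly in $t$; what is uniformly bounded is $v(t)=u(t)-\theta(\hat\chi)t$ (as in the paper) together with all spatial derivatives of $u$, and this is all your argument actually uses. Second, when you write the $C^\infty$ limit of $du(t)$ as $du_\infty$ you are implicitly asserting that the limiting $1$-form is exact; this needs a word --- for instance, the periods $\int_\gamma du(t)=0$ pass to the limit, or one can extract a convergent subsequence of the mean-zero, uniformly bounded functions $\tilde u(t)$ --- so that $\hat\chi_\infty$ lies in $[\hat\chi]$ and Theorem \ref{0815100} genuinely applies. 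Both are one-line fixes, not gaps.
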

\begin{proof}
By using Theorem \ref{08150441}, for any $0<t_1<t_2$, we have
\begin{equation}\label{0815060}
|\tilde u(x,t_1)-\tilde u(x,t_2)|\leqslant \int_{t_1}^{t_2}\left|\frac{\partial}{\partial t}\tilde u(x,t)\right|dt\leqslant \int_{t_1}^{+\infty}\left|\frac{\partial}{\partial t}\tilde u(x,t)\right|dt\leqslant \frac{C_1}{C_2}e^{-C_2t_1},
\end{equation}
which implies that $\{\tilde u(t)\}$ is a Cauchy sequence in $C^0$-sense with respect to $t$.

According to Theorem \ref{0824031} and Theorem \ref{0824061}, $v(t)= u(t)-\theta \left( \hat \chi \right)t $ is uniformly $C^\infty$-bounded along the flow $(\ref{GLMCF1})$, so there exists a subsequence $v(t_i)= u(t_i)-\theta \left( \hat\chi \right) t_i$ converging to a smooth function $v_\infty$ on $X$ and $\hat\chi_{v(t_i)}=\hat\chi+dv(t_i)$ converging to $\hat\chi_{v_\infty}=\hat\chi+dv_\infty$ as $t_i$ goes to $+\infty$. Since $\theta(\hat\chi_{v(t)})=\theta(\hat\chi_{u(t)})=\dot u(t)$, inequality \eqref{0815058} implies that $\theta(\hat\chi_{v(t_i)})$ converges to a constant in $C^\infty$-sense. Hence $\hat\chi_{v_\infty}$ induces a special Lagrangian graph in $T^*X$. Furthermore, we conclude that $\hat\chi_{v_\infty}=\hat\chi$ by using the uniqueness theorem (Theorem \ref{0815100}), which implies that $v_\infty$ is a constant. By the definition \eqref{0815043},  $\tilde u(t_i)$ must converge to $0$ in $C^\infty$-sense. Since $\{\tilde u(t)\}$ is a Cauchy sequence in $C^0$-sense, $\tilde u(t)$ converges to $0$ in $C^0$-sense as $t$ goes to $+\infty$. Let $t_2$ goes to $+\infty$ in \eqref{0815060}, we have 
\begin{equation}\label{0815061}
\|\tilde u(t)\|_{C^0(X)}\leqslant \frac{C_1}{C_2}e^{-C_2t}\ \ \ \text{on}\ \ \ [0,+\infty).
\end{equation}
Hence $\tilde u(t)$ converges exponentially to $0$ in $C^0$-sense.

We claim that $\tilde u(t)$ actually converges to $0$ in $C^\infty$-sense as $t$ goes to $+\infty$. If not, there exist $\varepsilon_0>0$, $k_0\in\mathbb{N}^+$ and a time sequence $t'_j$ converging to $+\infty$ such that
\begin{equation}\label{0906005}
\|\tilde u(t'_j)\|_{C^{k_0}(X)}\geqslant\varepsilon_0.
\end{equation}
But repeating above arguments for $v(t'_j)$, we conclude that there is a subsequence also denoted by $v(t'_j)$ converging to a constant and hence $\tilde u(t'_j)$ converges to $0$ in $C^\infty$-sense. This is a contradiction with \eqref{0906005}. We prove the claim. 

At last, we prove that the smooth convergence should be exponentially fast. For any $k\in\mathbb{N^+}$, by using \eqref{0815044} and integration by parts,
\begin{equation}\label{0815053}
\begin{split}
\int_X \left| D^k\tilde u(t)\right|^2 dV_g&=-\int_t^\infty\frac{\partial}{\partial s}\int_X \left| D^k\tilde u(s)\right|^2_g dV_g\ ds\\
&=-2\int_t^\infty\int_X D^k\tilde u(s)\ast_g D^k\frac{\partial}{\partial s}\tilde u(s) dV_g\ ds\\
&\leqslant2\int_t^\infty\int_X \left|D^{2k}\tilde u(s)\right|_g \left|\frac{\partial}{\partial s}\tilde u(s)\right| dV_g\ ds\\
&\leqslant2\int_t^\infty\left(\int_X \left|D^{2k}\tilde u(s)\right|^2_gdV_g \right)^{\frac{1}{2}} \left(\int_X\left|\frac{\partial}{\partial s}\tilde u(s)\right|^2 dV_g\right)^{\frac{1}{2}} ds\\
&\leqslant Ce^{-C_2t}.
\end{split}
\end{equation}
Hence $\|\tilde u(t)\|_{W^{k,2}}$ converges exponentially  to $0$ as $t$ goes to $+\infty$. Then by the Sobolev embedding theorem, we conclude that $\tilde u(t)$ converges exponentially  to $0$ in $C^\infty$-sense and hence $\hat \chi_{u(t)}$ converges exponentially  to $\hat\chi$ in $C^\infty$-sense as $t$ goes to $+\infty$.
\end{proof}
{\it Proof of Theorem \ref{0825003}.}\ \ Theorem \ref{0825003} follows from Theorem \ref{0824061} and Theorem \ref{theo5.5} directly. \QEDB

\end{document}